\definecolor{refkey}{cmyk}{0.64,0.00,0.95,0.40}
\definecolor{labelkey}{rgb}{0.9451,0.2706,0.4941}
\definecolor{mygreen}{cmyk}{0.64,0.00,0.95,0.40}
\definecolor{cyan20}{cmyk}{.2,0,0,0}
\DeclareOldFontCommand{\rm}{\normalfont\rmfamily}{\mathrm}
\newcommand{\ep}{\varepsilon}
\newcommand{\dual}[1]{\langle{#1}\rangle}
\title{
The inf-sup condition and error estimates of the Nitsche method for
evolutionary diffusion-advection-reaction equations
}
\author{
Yuki Ueda\thanks{Graduate School of Mathematical Sciences, The University of
Tokyo, Komaba 3-8-1, Meguro, Tokyo 153-8914, Japan. \textit{E-mail}: \texttt{ueda@ms.u-tokyo.ac.jp}}
 \ and 
Norikazu Saito\thanks{Graduate School of Mathematical Sciences, The University of
Tokyo, Komaba 3-8-1, Meguro, Tokyo 153-8914, Japan. \textit{E-mail}: \texttt{norikazu@ms.u-tokyo.ac.jp}}
}
\begin{document}

\theoremstyle{plain}
\newtheorem{theorem}{Theorem}
\newtheorem{lemma}{Lemma}[section]
\theoremstyle{definition}
\newtheorem{definition}[lemma]{Definition}
\newtheorem{example}[lemma]{Example}
\newtheorem{remark}[lemma]{Remark}
\newtheorem{assumption}{Assumption}
\renewcommand\theassumption{\Roman{assumption}}

\maketitle

\begin{abstract}
The Nitsche method is a method of ``weak imposition'' of the
 inhomogeneous Dirichlet boundary conditions for partial differential
 equations. This paper explains stability and convergence study of the
 Nitsche method applied to evolutionary diffusion-advection-reaction
 equations. We mainly discuss a general space semidiscrete scheme 
 including not only the standard finite element method but also
 Isogeometric Analysis. Our method of analysis is a variational one
 that is a popular method for studying elliptic problems. The
 variational method enables us to obtain the best approximation property directly.
 Actually, results show that the scheme satisfies the inf-sup condition and
 Galerkin orthogonality. Consequently, the optimal order error estimates
 in some appropriate norms are proven under some regularity assumptions
 on the exact solution. We also consider a fully discretized scheme using
 the backward Euler method. Numerical example demonstrate the validity of
 those theoretical results.
\end{abstract}

{\noindent \textbf{Key words:}
diffusion-advection-reaction equation,
inf-sup condition,
IGA
}

\bigskip

{\noindent \textbf{2010 Mathematics Subject Classification:}
65M12, 65M60

\section{Introduction}
\label{sec:1}

The boundary condition is an indispensable component of the well-posed
problem of partial differential equations. It is not merely a side
condition. In computational mechanics, great attention should be paid the imposition of boundary
conditions, although it is sometimes
understood as a simple and unambiguous task.

The Neumann boundary condition or natural boundary condition 
is naturally considered in the variational equation so that it is
handled directly in finite element method (FEM). 
By contrast, a specification of the Dirichlet boundary condition
(DBC) has room for discussion. In traditional FEM including the
continuous $\mathbb{P}^k$ FEM for example, DBC is imposed
by specifying the nodal values at boundary nodal points. Although it is
simple, this ``strong imposition'' of DBC is based on the fact that unknown values of the resulting
finite dimensional system agree with
nodal values in traditional FEM. Therefore, it is difficult to apply
this technique to the iso-geometric analysis
(IGA). Actually, IGA is a class of FEM using B-spline or non-uniform rational
B-spline (NURBS) basis functions. It has been widely
applied in many fields of computational mechanics, providing smooth
approximate solutions of partial differential equations using only a few
degrees of freedom (DOF). Moreover, it provides a more accurate
representation of computational domains with complex shapes. That is,
the geometric representation of 3D computational domain generated by a
CAD system is handled directly. See \cite{chb09} for more details.
Unfortunately, unknown values in IGA do not generally agree with nodal values.
Furthermore, it has often been pointed out that strongly
imposed DBCs give rise to spurious oscillations, even for stable
discretzation methods. To resolve those shortcomings, Bazilevs et
al. \cite{bh07,bmch07} proposed a method of ``weak imposition'' of DBC by
applying the methodology of the discontinuous Galerkin (DG) method and
discussed its efficiency by numerical experiments for non-stationary
Navier--Stokes equations. Their method, originally proposed by
Nitsche \cite{nit71}, is commonly called the \emph{Nitsche method}.       
Stability and convergence of the Nitsche method for
elliptic problems have been well studied to date. 

This paper addresses the Nitsche method for evolutionary problems. In
particular, we study the stability and convergence of the FEM
discretization including IGA. Earlier studies of the Nitsche method were
conducted by formulating the method as a one-step method, as in an earlier report of the literature 
\cite{tho06}. By
contrast, we present a different perspective: we
assess the Nitsche method using a variational approach. 
Consequently, the analysis becomes greatly simplified. Optimal
order error estimates in some appropriate norms are established.
Such a variational approach was recently applied successfully to analysis
of the DG time-stepping method for a wide class of parabolic equations
in an earlier paper \cite{sai17}.   
To fix the idea, we consider the following
diffusion-advection-reaction equation for the function $u=u(x,t)$,
$x\in\overline\Omega$ and $t\in \overline{J}$, 
\begin{subequations}
 \label{eq:1}
  \begin{align}
\partial_t u+L(x,t)u&=f(x,t), && (x,t)\in\Omega\times J, \label{eq:1a}\\
u(x,t)&=g(x,t) ,&& (x,t)\in\Gamma\times J, \label{eq:1b}\\
   u(x,0)&=u_0(x) ,&& x\in\Omega .\label{eq:1c}
\end{align}
Hereinafter, $\Omega$ is a bounded polyhedral domain or NURBS domain (see Definition \ref{Def:NURBS_Domain}) in
$\mathbb{R}^d$ with the boundary $\Gamma
=\partial\Omega$, $J$ represents the time interval defined as $J=(0,T)$ with $T>0$, and  
$L(x,t)$ signfies the elliptic differential operator defined as 
\begin{equation}
 \label{eq:Lw}
 L(x,t)w=-\nabla \cdot \mu(x,t)\nabla w+{b}(x,t)\cdot \nabla w + c(x,t)w.
\end{equation}
\end{subequations}
Moreover,
$\mu:\Omega\times J\to\mathbb{R}^{d\times d}$, 
${b}:\Omega\times J\to\mathbb{R}^{d}$, 
$c,f:\Omega\times J\to\mathbb{R}$, 
$g:\Gamma\times J\to\mathbb{R}$, 
and $u_0:\Omega\to\mathbb{R}$ are given functions. The
assumptions to these functions will be described later.

At this stage, we describe the idea of applying the Nitsche method for
\eqref{eq:1} to clarify the novelty and motivation of this study. To avoid unimportant difficulties, we presume that
$\mu=I$, $b=0$ and $c=0$ for the time being. In subsequent sections, we remove those
restrictions.    
By multiplying both sides of \eqref{eq:1a} by a test function $v\in
H^1(\Omega)$, integrating over $\Omega$ and finally applying integration by
parts, we obtain
\[
 \int_\Omega (\partial_tu) v~dx +\int_\Omega\nabla u\cdot\nabla
 v~dx-\int_{\Gamma} (n\cdot \nabla u)v~dS=\int_\Omega fv~dx.
\]
Introducing a partition $\mathcal{T}_h$ of $\Omega$, with $h$ being the granularity parameter, and a finite
dimensional subspace $V_h$ of $H^1(\Omega)$, we consider the Galerkin approximation
\begin{equation*}
 \int_\Omega (\partial_tu_h) v_h~dx +\int_\Omega\nabla u_h\cdot\nabla
 v_h~dx -\sum_{E\in\mathcal{E}_h^e}\int_{E} (n\cdot \nabla u_h)v_h~dS\\
 =\int_\Omega fv_h~dx\quad
 (v_h\in V_h),
\end{equation*}
where $\mathcal{E}_h^e$ denotes the set of all boundary edges.
(the definition of those notations will be stated in Section \ref{sec:2}.) 
Then, the Nitsche method reads as shown below
\begin{multline*}
 \int_\Omega (\partial_tu_h) v_h~dx +\int_\Omega\nabla u_h\cdot\nabla
 v_h~dx
 -\sum_{E\in\mathcal{E}_h^e}\int_{E} (n\cdot \nabla u_h)v_h~dS\\
 \underbrace{-\sum_{E\in\mathcal{E}_h^e}\int_{E} (n\cdot \nabla v_h)(u_h-g)~dS}_{=I_s}
 \underbrace{+\sum_{E\in\mathcal{E}_h^e}\int_{E} \frac{\ep}{h}(u_h-g)v_h~dS}_{=I_p}
 =
\int_\Omega fv_h~dx\quad
 (v_h\in V_h)
\end{multline*}
for a.e. $t\in J$, where $\ep>0$ is a penalty parameter. This is written, equivalently, as
\begin{equation}
\label{eq:formal2}
\displaystyle\int_{\Omega}(\partial_tu_h)v_h~dx+a_{\ep,h}(u_h,v_h)=F_{\ep,h}(t;v_h)\qquad (v_h\in V_h,~t\in J), 
\end{equation}
where 
\begin{align*}
a_{\ep,h}(u_h,v_h)&= 
\int_\Omega\nabla u_h\cdot\nabla
 v_h~dx 
 -\sum_{E\in\mathcal{E}_h^e}\int_{E} (n\cdot \nabla u_h)v_h~dS\\
 &-\sum_{E\in\mathcal{E}_h^e}\int_{E} (n\cdot \nabla v_h)u_h~dS +\sum_{E\in\mathcal{E}_h^e}\int_{E} \frac{\ep}{h}u_hv_h~dS\\
F_{\ep,h}(t;v_h)&= \int_\Omega fv_h~dx
 -\sum_{E\in\mathcal{E}_h^e}\int_{E} (n\cdot \nabla v_h)g~dS+\sum_{E\in\mathcal{E}_h^e}\int_{E} \frac{\ep}{h} gv_h~dS.
\end{align*}
Term $I_s$ is added to symmetrize the equation. Term $I_p$ is
called the penalty term. Letting $\ep$ be sufficiently large, we expect
that the boundary condition $u_h=g$ on $\Gamma$ is specified in a weak
sense. An important advantage of \eqref{eq:formal2} is that the ``elliptic part'' 
\[
 \int_\Omega\nabla u_h\cdot\nabla
 v_h~dx 
 -\sum_{E\in\mathcal{E}_h^e}\int_{E} (n\cdot \nabla u_h)v_h~dS
 -\sum_{E\in\mathcal{E}_h^e}\int_{E} (n\cdot \nabla v_h)u_h~dS +\sum_{E\in\mathcal{E}_h^e}\int_{E} \frac{\ep}{h}u_hv_h~dS
\]
can be coercive in an appropriate norm by choosing suitably
large $\ep$. Moreover, the constant appearing in the coercive inequality is
independent of the penalty parameter $\ep$, which implies that the scheme
can be stable in a certain sense. In fact, the classical penalty
method has no such property. Another advantage is that the smooth solution $u$ of
\eqref{eq:1} exactly satisfies \eqref{eq:formal2}. Consequently, the
``parabolic Galerkin orthogonality''
\begin{equation}
\label{eq:formal3}
\int_J \left[ \int_{\Omega}(\partial_tu-\partial_tu_h)y_h~dx+a_{\ep,h}(u-u_h,y_h)\right]dt=0 \qquad (y_h\in L^2(J;V_h)) 
\end{equation}
is available. This characteristic enables us to apply the variational method to study
the Nitsche method \eqref{eq:formal2}: after having established
the ``inf-sup'' condition, we can derive best
approximation properties and optimal order error estimates directly by combining
the ``inf-sup'' condition and \eqref{eq:formal3}.
Therefore, our effort will be concentrated on the derivation of the
``inf-sup'' condition, which is the main result of this paper.
Although such an approach is quite standard for elliptic problems,
apparently little has been done for parabolic problems.
The use of \eqref{eq:formal3} is not originally our idea. 
Others have considered this identity before, but no report of the relevant literature describes systematic use of \eqref{eq:formal3}. 

Before concluding this Introduction, we review earlier studies of the convergence of the Nitsche method applied to parabolic equations. 
Thom{\' e}e \cite{tho06} reported error estimates in the $L^{\infty}(J;L^2(\Omega))$ norm for a semi-discrete (in space) finite  element approximation to a linear inhomogeneous heat equation. 
Heinrich and Jung \cite{hj08} applied the method to a parabolic interface problem, deriving similar error estimates as \cite{tho06}. 
Choudury and Lasiecka \cite{cl91} studied a parabolic diffusion-reaction problem and proved error estimates in the $L^p(J;L^2(\Omega))$ norms with $1\le p\le \infty$ using the semigroup theory. 
All those studies relied on the assumption that the coefficients of the equation are independent of the time variable. 
By contrast, we study the parabolic diffusion-advection-reaction equation with time-dependent coefficients and derive error estimates in the $H^1(J;H^{-1}(\Omega))$ and $L^2(J;H^1(\Omega))$ norms.

This paper is organized as follows. Section \ref{sec:2} states the formulation of Nitsche method and our main results. Section \ref{sec:3} presents a review of some properties of classical FEM and IGA. Section \ref{sec:4}, \ref{sec:5} and \ref{sec:6} provide proof of our main results. Analysis of the fully discretized problem is presented in Section \ref{sec:7}. Finally, this report presents a numerical example in Section \ref{sec:8}.

\section{Nitsche method and the main results}
\label{sec:2}

\subsection{Weak formulation of \eqref{eq:1}}
\label{sec:2z}

We use the standard Lebesgue spaces $L^p(\Omega)$, $L^p(\Gamma)$ and
Sobolev spaces $H^m(\Omega)$, $H^{m-1/2}(\Gamma)$, where $1\le p\le
\infty$ and $1\le m\in \mathbb{Z}$. The norms are denoted as
$\|\cdot\|_{L^p(\Omega)}$,
$\|\cdot\|_{L^p(\Gamma)}$,
$\|\cdot\|_{H^m(\Omega)}$ and 
$\|\cdot\|_{H^{m-1/2}(\Omega)}$ for example. Moreover, the $L^2$-inner product is denoted as $(\cdot,\cdot)_{L^2(\Omega)}$, and so on. The semi-norm
$|\cdot|_{H^m(\Omega)}$ of
$H^m(\Omega)$ is defined as
\[
  |v|^2_{H^m(\Omega)}=\sum_{|\alpha|=m}\left\|\frac{\partial^\alpha}{\partial
   x_1^{\alpha_1}\cdots\partial x_d^{\alpha_d}} v\right\|_{L^2(\Omega)}^2,
\]
where $\alpha=(\alpha_1,\ldots,\alpha_d)$, $0\le \alpha_1,\ldots,\alpha_d\in\mathbb{Z}$,
and $|\alpha|=\alpha_1+\cdots+\alpha_d$. In fact,
\[
 \|v\|_{H^m(\Omega)}^2=\|v\|_{L^2(\Omega)}^2+\sum_{1\le |\alpha|\le m}|v|^2_{H^m(\Omega)}.
\]

Let $\operatorname{Tr}=\operatorname{Tr}(\Omega,{\Gamma})$ be a trace
operator from $H^1(\Omega)$ into $H^{1/2}(\Gamma)$, which is a linear and
 continuous operator. There exists a linear and continuous operator $\operatorname{Tr}^{-1}=\operatorname{Tr}^{-1}(\Omega,\Gamma)$ of
$H^{1/2}(\Gamma)\to H^1(\Omega)$, which is called a lifting operator,
such that $\operatorname{Tr}(\operatorname{Tr}^{-1}\eta)=\eta$ on $\Gamma$ for all $\eta\in H^{1/2}(\Gamma)$. 
Below, we write it as $v|_\Gamma=\operatorname{Tr}v$ if there is no fear of confusion.

As usual, we set $H^1_0(\Omega)=\{v\in H^1(\Omega)\mid v|_\Gamma=0\}$
and
\[
 H^{-1}(\Omega)=[H^1_0(\Omega)]'=\mbox{the dual space of
 $H^1_0(\Omega)$}. 
\]

Let $X$ be a Hilbert space. For $1\leq r < \infty$ and $0\le t_0<t_1$, the space $L^r(t_0,t_1;X)$ denotes a Bochner space equipped with the norm
\begin{equation*}
 \|v\|_{L^r(t_0,t_1;X)} =
  \left(\displaystyle\int_{t_0}^{t_1}\|v(t)\|_X^r~dt\right)^{1/r}.
\end{equation*}
Let $Y$ be a (possibly another) Hilbert space.  
We also use the so-called Bochner--Sobolev space $H^{1}(t_0,t_1;X,Y)$ defined as
\begin{equation*}
H^{1}(t_0,t_1;X,Y) = \left\{v\in L^2(t_0,t_1;X) \mid \dfrac{\mathrm{d}v}{\mathrm{d}t}\in L^2(t_0,t_1;Y)\right\},
\end{equation*}
where $\frac{\mathrm{d}}{\mathrm{d}t}$ denotes the weak derivative
for $t$. This is a Hilbert space equipped with the norm
\begin{equation*}
\|v\|_{H^1(t_0,t_1;X,Y)}^2 = \|v\|_{L^2(t_0,t_1;X)}^2 + \left\|\dfrac{\mathrm{d}v}{\mathrm{d}t}\right\|_{L^2(t_0,t_1;Y)}^2.
\end{equation*}


It is apparent that $H^1(t_0,t_1;X,X)\subset C^0([t_0,t_1];X)$ (see \cite[theorem2, Chapter 5.9]{eva10} for example) is satisfied. Furthermore, letting $H$ and $\mathscr{V}$ be (real) Hilbert spaces such that $\mathscr{V}\subset H$ is dense with continuous injection, we identify $H$ with $H'$ ($H\simeq H'$) as usual and consider the triple 
\begin{equation}
 \label{eq:gt}
  \mathscr{V}\subset H\subset\mathscr{V}'.
\end{equation}
Then we have $H^1(t_0,t_1;\mathscr{V},\mathscr{V}')\subset C^0([t_0,t_1];H)$ (see \cite[Theorem1, Chapter XVIII]{dl92} for example).

\medskip

Throughout this paper, we use the following assumptions: 

\medskip

\begin{assumption}
\label{Assume:Ellipticity}
Regularity of coefficients and data functions:  
\begin{subequations}
 \label{eq:L}
\begin{gather}
\mu \in C^0(\overline{\Omega\times J})^{d\times d},~ \mu\mbox{ is symmetric};\label{eq:L1}\\
{b} \in L^\infty(J;W^{1,\infty}(\Omega)^d);\label{eq:L2}\\
c \in L^\infty(J;L^\infty(\Omega));\label{eq:L3} \\
\exists \mu_1>\mu_0>0,\quad \mu_0|\xi|^2\le \mu(x,t)\xi\cdot \xi\le
 \mu_1|\xi|^2\quad (x\in\overline{\Omega},t\in J,\xi\in\mathbb{R}^d);\label{eq:L4}\\
 \exists c_0>0,\quad c-\frac{1}{2}\nabla\cdot {b}\ge c_0\quad (x\in\Gamma,t\in J);\label{eq:L5}\\
f\in L^2(J;L^2(\Omega)),\quad u_0\in L^2(\Omega),\quad g\in G_0.\label{eq:L6}
\end{gather}
\end{subequations} 
Therein,
\[
 G_0=\{\eta=w|_\Gamma\in L^2(J;H^{1/2}(\Gamma)) \mid w\in
 L^2(J;H^1(\Omega))\cap H^1(J;L^2(\Omega))\}.
\]
\end{assumption}

Introducing the bilinear form on $H^1(\Omega)\times H^1(\Omega)$ as   
\begin{equation}
 \label{eq:forma}
a(t;w,v)=\int_\Omega \left[\mu\nabla w\cdot \nabla v
		       +({b}\cdot \nabla w)v + cwv \right]~ dx,
\end{equation}
we have
\begin{subequations}
 \label{eq:a1}
\begin{align}
 |a(t;w,v)|&\le M \|w\|_{H^1(\Omega)}\|v\|_{H^1(\Omega)} && (w,v\in H^1(\Omega),t\in J); \label{eq:a11}\\
 a(t;w,w) &\ge  \alpha \|w\|_{H^1(\Omega)}^2 &&  (w\in
 H^1_0(\Omega),t\in J);\label{eq:a12}\\
 a(t;w,v)&=(L(\cdot,t)w,v)_{L^2(\Omega)} && (w\in H^2(\Omega),v\in
 H^1_0(\Omega),t\in J),\label{eq:a13}
\end{align}
\end{subequations}
where $M=M(d,\mu,{b},c)>0$ and $\alpha=\min\{\mu_0,c_0\}$. 

The following is the standard result (see \cite[Chapter XVIII]{dl92}, \cite[Chapter IV]{wlo87} for example).
\begin{lemma}
\label{la:00}
Presuming that Assumption \ref{Assume:Ellipticity} is satisfied, then there exists a unique
 \begin{subequations} 
  \label{eq:u0}
\begin{equation}
\label{eq:u00}
u\in
H^1(J;H^1(\Omega),H^{-1}(\Omega))
\end{equation}
such that   
\begin{equation}
\label{eq:u01}
u=g\quad \mbox{on }\Gamma,~t\in J,
\end{equation}
 and 
\begin{multline}
 \int_0^T [\dual{\partial_t{u},y_1}_{H^{-1},H^1_0}+a(t;u(t),y_1(t))]~dt+({u}(0),y_2)_{L^2(\Omega)} \\ =
  \int_0^T (f,y_1)_{L^2(\Omega)}~dt +(u_0,y_2)_{L^2(\Omega)}\qquad \forall
  (y_1,y_2)\in L^2(J;H^1_0(\Omega))\times L^2(\Omega).
\label{eq:u02}
\end{multline}
 \end{subequations}
Moreover, we have $\partial_tu+L(t)u\in L^2(J;L^2(\Omega))$ and it holds that
 \eqref{eq:1} for $x\in \Omega$ and $t\in J$.  
\end{lemma}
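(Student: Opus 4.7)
The plan is to reduce the problem to a homogeneous Dirichlet problem by lifting the boundary data, invoke the classical theorem of J.-L.~Lions on the Gelfand triple $H^1_0(\Omega)\subset L^2(\Omega)\subset H^{-1}(\Omega)$, and finally upgrade the regularity so that \eqref{eq:1} is satisfied pointwise. By definition of $G_0$ there exists $G\in L^2(J;H^1(\Omega))\cap H^1(J;L^2(\Omega))$ with $G|_\Gamma=g$; since $H^1(J;L^2(\Omega))\hookrightarrow C^0([0,T];L^2(\Omega))$, the trace $G(0)\in L^2(\Omega)$ is well defined. I would then seek $u$ of the form $u=\tilde u+G$ with $\tilde u\in L^2(J;H^1_0(\Omega))\cap H^1(J;H^{-1}(\Omega))$ and $\tilde u(0)=u_0-G(0)\in L^2(\Omega)$, satisfying
\[
\dual{\partial_t\tilde u,v}_{H^{-1},H^1_0}+a(t;\tilde u,v)=\dual{\tilde f(t),v}_{H^{-1},H^1_0}\qquad\forall v\in H^1_0(\Omega),~\text{a.e.\ }t\in J,
\]
where $\tilde f=f-\partial_t G-L(\cdot)G$ lies in $L^2(J;H^{-1}(\Omega))$ because $f\in L^2(J;L^2(\Omega))$, $\partial_t G\in L^2(J;L^2(\Omega))$, and $L(\cdot)G\in L^2(J;H^{-1}(\Omega))$ by the hypotheses on the coefficients.

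Coercivity \eqref{eq:a12} follows by integrating the advection term by parts; for $w\in H^1_0(\Omega)$ the boundary integral vanishes, so
\[
a(t;w,w)=\int_\Omega \mu\nabla w\cdot\nabla w~dx+\int_\Omega\bigl(c-\tfrac12\nabla\cdot b\bigr)w^2~dx\ge \mu_0|w|_{H^1(\Omega)}^2+c_0\|w\|_{L^2(\Omega)}^2\ge \alpha\|w\|_{H^1(\Omega)}^2,
\]
by \eqref{eq:L4}--\eqref{eq:L5}, while continuity \eqref{eq:a11} is immediate from \eqref{eq:L1}--\eqref{eq:L3}, and measurability of $t\mapsto a(t;\cdot,\cdot)$ follows from the regularity of the coefficients. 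The classical theorem on coercive parabolic equations (see \cite[Chapter XVIII]{dl92} or \cite[Chapter IV]{wlo87}) then furnishes a unique $\tilde u$ with the stated regularity. Uniqueness of $u$ itself reduces to uniqueness of $\tilde u$, since any two solutions of \eqref{eq:u0} differ by an element of $L^2(J;H^1_0(\Omega))\cap H^1(J;H^{-1}(\Omega))$ solving the homogeneous problem, and the standard energy identity $\tfrac{d}{dt}\tfrac12\|\cdot\|_{L^2(\Omega)}^2+a(t;\cdot,\cdot)=0$ forces it to vanish.

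Setting $u=\tilde u+G$ yields \eqref{eq:u00}--\eqref{eq:u02}. To recover the additional regularity, take $y_2=0$ and let $y_1\in L^2(J;H^1_0(\Omega))$ in \eqref{eq:u02}: this shows that $\partial_t u+L(t)u=f$ in the sense of distributions on $\Omega$ for a.e.\ $t$; because $f\in L^2(J;L^2(\Omega))$, this identity already places $\partial_t u+L(t)u$ in $L^2(J;L^2(\Omega))$ and gives \eqref{eq:1a} pointwise a.e. The boundary condition \eqref{eq:1b} is \eqref{eq:u01}, and \eqref{eq:1c} follows by choosing $y_1$ compactly supported in $t$ and varying $y_2\in L^2(\Omega)$, exploiting $H^1(J;H^1(\Omega),H^{-1}(\Omega))\hookrightarrow C^0([0,T];L^2(\Omega))$ cited earlier. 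The only genuinely delicate point, and the main bookkeeping obstacle, is arranging the lifting so that $\tilde f$ and $\tilde u(0)$ land in the correct spaces with estimates depending only on the data; once this is done the rest is a direct invocation of classical parabolic theory.
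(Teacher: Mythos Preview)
Your argument is correct and is precisely the standard route sketched in the references the paper cites. Note, however, that the paper itself does \emph{not} prove Lemma~\ref{la:00}: it simply states that the result is standard and points to \cite[Chapter~XVIII]{dl92} and \cite[Chapter~IV]{wlo87}. So there is no ``paper's own proof'' to compare against; your write-up is essentially a fleshed-out version of what those references contain (lifting of the boundary datum, reduction to a homogeneous problem on the Gelfand triple $H^1_0(\Omega)\subset L^2(\Omega)\subset H^{-1}(\Omega)$, and an appeal to Lions' theorem).

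Two minor remarks. First, your coercivity computation tacitly uses $c-\tfrac12\nabla\cdot b\ge c_0$ on all of $\Omega$, whereas \eqref{eq:L5} as written restricts $x$ to $\Gamma$; this is almost certainly a typo in the paper (the claimed inequality \eqref{eq:a12} with $\alpha=\min\{\mu_0,c_0\}$ requires the bound on $\Omega$), and your reading is the intended one. Second, to recover the initial condition \eqref{eq:1c} it is cleaner to set $y_1=0$ directly in \eqref{eq:u02}, which immediately gives $(u(0)-u_0,y_2)_{L^2(\Omega)}=0$ for all $y_2\in L^2(\Omega)$; your detour through compactly supported $y_1$ is unnecessary.
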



\subsection{Finite dimensional subspaces}
\label{sec:2a}

We introduce a finite dimensional subspace $V_h$ of $H^1(\Omega)$
in a somewhat abstract manner below. Concrete examples are given in
Section \ref{sec:3}. We also collect (finite dimensional) function
spaces and norms used for this study. 

Recall that $\Omega\subset\mathbb{R}^d$ is a polyhedral domain with the
boundary $\Gamma$.
We introduce a partition $\mathcal{T}_h$ of $\Omega$ such that each
$K\in\mathcal{T}_h$ is a closed set in $\mathbb{R}^d$, 
the $\mathbb{R}^d$ Lebesgue measure of $K\cap K'$ vanishes for $K,K'\in\mathcal{T}_h$ with $K\ne K'$, and
\[
 \overline{\Omega}=\bigcup_{K\in \mathcal{T}_h}K.
\]
The diameter of $K\in\mathcal{T}_h$ is designated by $h_K$ and is set as
$h=\max\{h_K \mid K\in \mathcal{T}_h\}$. Then, letting $\mathcal{E}_h$ be the set of edges and
$\mathcal{E}_h^e =\{E\in \mathcal{E}_h \mid E\subset \Gamma\}$, we
express $\Gamma$ as
\[
		   \Gamma=\bigcup_{E\in\mathcal{E}_h^e}E.
\]
For $E\in\mathcal{E}_h$, the diameter of $E$ is designated by $h_E$. 
Moreover, for $E\in\mathcal{E}_h$, we write $K_E$ to express
$K\in\mathcal{T}_h$ such that $E\subset \partial K$. In general, such
$K_E$ is not unique. However, it is unique for any $E\in\mathcal{E}_h^e$.

\begin{assumption}
\label{Assume:EqualityatExternalElement}
There exists a positive constant $C$ such that
\begin{equation}
 \label{eq:ke}
h_{K_E}\le Ch_E\qquad (E\in\mathcal{E}_h^e,~K_E\in\mathcal{T}_h).
\end{equation}
\end{assumption}


Below, we use the finite dimensional subspace
\begin{equation}
 \label{eq:vh00}
V_h\subset H^1(\Omega).
\end{equation}

We mention no specific definition, but we do make the following assumptions. 

\begin{assumption}
\label{Assume:Local_Regularity}
\begin{equation}
v_h|_K\in H^2(K)\qquad(v_h\in V_h,~ K\in\mathcal{T}_h).
\end{equation}
\end{assumption}

\begin{assumption}
\label{Assume:inequalities}
(i) Trace inequality. There exists a positive constant $C_{\mathrm{Tr}}$ such that
\begin{equation}
\|v\|_{L^2(E)}^2 \le C_{\mathrm{Tr}}h_E^{-1}\left(\|v\|_{L^2(K_E)}^2 +
		       h_{K_E}^2|v|_{H^1(K_E)}^2\right)\quad (v\in
H^1(K_E),~ E\in\mathcal{E}_h^e).
\end{equation}
(ii) Inverse inequality.
\begin{equation}
|v_h|_{H^1(K)}\le Ch_K^{-1}\|v_h\|_{L^2(K)} \quad (v_h\in V_h,
 ~K\in\mathcal{T}_h).
\end{equation}
(iii) Interpolation error estimates. There exists a positive integer $k$
 and a projection $\Pi_h:H^{k}(\Omega)\to V_h$ such that, for $2\le l\le
 k+1$, 
\begin{equation}
\|w-\Pi_hw\|_{H^j(\Omega)}\le Ch^{l-j}\|w\|_{H^{l}(\Omega)}\quad 
(w\in H^{l}(\Omega),~j=0,1,2).
\end{equation}
\end{assumption}

Assumptions \ref{Assume:EqualityatExternalElement} and \ref{Assume:inequalities} imply that there exists a positive constant $C^*$ such that
\begin{equation}
\label{Eq:Trace_Constant}
\|v_h\|_{L^2(E)}^2\le C^*h_E^{-1}\|v_h\|_{L^2(K_E)}^2\quad(v_h\in V_h,~ E\in\mathcal{E}_h^e).
\end{equation}
Moreover, Assumption \ref{Assume:Local_Regularity} gives that the same constant $C^*$ satisfies
\begin{equation}
\label{Eq:Trace_Constant2}
\|n\cdot\nabla v_h\|_{L^2(E)}^2 \le C^*h_E^{-1}|v_h|_{H^1(K_E)}^2\quad(v_h\in V_h,~ E\in\mathcal{E}_h^e).
\end{equation}

Setting $V = \{v\in H^1(\Omega)\mid v|_K\in H^2(K)\quad (K\in\mathcal{T}_h)\}$, then $V_h\subset V$. Furthermore, we define
\begin{align*}
\|v_h\|_{V_h}^2 &= \|v_h\|_{H^1(\Omega)}^2 + \displaystyle\sum_{E\in\mathcal{E}_h^e}h_E^{-1}\|v_h\|_{L^2(E)}^2,\\
\|v\|_{V}^2 &= \|v\|_{H^1(\Omega)}^2 + \displaystyle\sum_{K\in\mathcal{T}_h}h_K^2|v|_{H^2(K)}^2 + \sum_{E\in\mathcal{E}_h^e}h_E^{-1}\|v\|_{L^2(E)}^2.
\end{align*}
This definition implies that $\|v_h\|_{V_h}\le \|v_h\|_V\le C\|v_h\|_{V_h}$ for all $v_h\in V_h$, where $C$ is a positive constant.
Moreover, for $\phi\in L^2(\Omega)$, we write that
\begin{equation}
\|\phi\|_{V_h'} = \displaystyle\sup_{v_h\in
 V_h}\dfrac{(\phi,v_h)_{L^2(\Omega)}}{\|v_h\|_{V_h}}.
\end{equation}
It is apparent that $\|\phi\|_{V_h'}\le \|\phi\|_{L^2(\Omega)}$ for every $\phi\in L^2(\Omega)$.

Furthermore, we define the space of trial function and test function in the Nitsche method. Let
\begin{equation}
X_h = H^1(J;V_h,V_h), \quad Y_h = L^2(J;V_h)\times V_h.
\end{equation}
They are Hilbert spaces equipped with the norms
\begin{align*}
\|z_h\|_{X_h}^2 &= \displaystyle\int_J(\|z_h\|_{V_h}^2 + \|\partial_tz_h\|_{V_h'}^2)~dt + \|z_h(0)\|_{L^2(\Omega)}^2,\\
\|(y_h,\widetilde{y}_h)\|_{Y_h}^2 &= \displaystyle\int_J\|y_h\|_{V_h}^2~dt +\|\widetilde{y}_h\|_{L^2(\Omega)}^2,
\end{align*}
respectively. In fact, $X_h\subset C^0(\overline{J};V_h)$. We also define the space
\begin{equation*}
X_V = \{z\in H^1(J;H^1(\Omega),L^2(\Omega))\mid z(t)\in V\quad (t\in J)\},
\end{equation*}
and norm
\begin{equation*}
\|z\|_{X_V}^2 = \displaystyle\int_J (\|z\|_{V}^2 + \|\partial_tz\|_{V_h'}^2)~dt+\|z(0)\|_{L^2(\Omega)}^2,
\end{equation*}
which satisfies $X_h\subset X_V\subset C^0(\overline{J};L^2(\Omega))$ and $\|z_h\|_{X_h}\le \|z_h\|_{X_V}\le C\|z_h\|_{X_h}$ for all $z_h\in X_h$, where $C$ is a positive constant.

\subsection{Formulation of the Nitsche method}
\label{sec:2b}

The Nitsche method for parabolic problems is presented below.

\smallskip

\noindent \textbf{(P$_{\ep,h}$)} Find $u_{\ep,h}\in X_h$ such that
\begin{subequations}
\label{Eq:Nitsche_for_each_time}
\begin{align}
\int_{\Omega}(\partial_tu_{\ep,h})v_h~dx + a_{\ep,h}(t;u_{\ep,h}(t),v_h) &= F_{\ep,h}(t;v_h)&&(v_h\in V_h,~t\in J),\label{Eq:Nitsche_for_each_time1}\\
\int_{\Omega}u_{\ep,h}(0)\widetilde{v}_h~dx &=
 \int_{\Omega}u_0\widetilde{v}_h~dx&& (\widetilde{v}_h\in V_h) \label{Eq:Nitsche_for_each_time2}.
\end{align}
\end{subequations}
Therein, we set
\begin{align*}
a_{\ep,h}(t;w,v_h)&=  a(t;w,v_h)
 -\sum_{E\in\mathcal{E}_h^e}\int_{E} (n\cdot\mu\nabla w)v_h~dS\\
 &\quad-\sum_{E\in\mathcal{E}_h^e}\int_{E} (n\cdot\mu\nabla v_h)w~dS - \int_{\Gamma_{\mathrm{in}}}b\cdot nwv_h~dS +\sum_{E\in\mathcal{E}_h^e}\int_{E} \frac{\ep}{h_E}wv_h~dS\\
F_{\ep,h}(t;v_h)&= \int_\Omega fv_h~dx
 -\sum_{E\in\mathcal{E}_h^e}\int_{E} (n\cdot\mu \nabla v_h)g~dS - \int_{\Gamma_{\mathrm{in}}}b\cdot ngv_h~dS+\sum_{E\in\mathcal{E}_h^e}\int_{E} \frac{\ep}{h_E} gv_h~dS
\end{align*}
for $w\in V$, $v_h\in V_h$
Moreover, $\Gamma_{\mathrm{in}}$ denotes the ``inflow'' boundary defined as  
\begin{equation}
 \label{eq:in}
\Gamma_{\mathrm{in}} = \Gamma_{\mathrm{in}} (t)=\{x\in
 \Gamma \mid b(x,t)\cdot n(x)<0\}.
\end{equation}
It is noteworthy that $\Gamma_{\mathrm{in}}$ is a time-dependent
region.

It is apparent that $a_{\ep,h}(t;\cdot,\cdot)$ is a bilinear form
on $V\times V_h$ for $t\in J$ and that $F_{\ep,h}(t;\cdot)$ is
a linear and continuous functional on $V_h$ for $t\in J$.

We have stated $a_{\ep,h}(t;\cdot,\cdot)$ in the case of $\mu=I$,
$b=0$, and $c=0$ presented in the Introduction. For a general $b$, we must add
the boundary integral term on $\Gamma_{\textrm{in}}$ to ensure the
coercivity of $a_{\ep,h}(t;\cdot,\cdot)$. Theorem
\ref{Thm:Coercivity_of_b} provides some related details. 

An alternate expression of \textbf{(P$_{\ep,h}$)} is presented below. 

\smallskip

\noindent\textbf{(P$_{\ep,h}$)} 
Find $u_{\ep,h}\in X_h$ such that
\begin{equation}
\label{Eq:Problem_Nitsche}
B_{\ep,h}(u_{\ep,h},\mathbf{y}_h) = \displaystyle\int_JF_{\ep,h}(t;y_h)~dt + (u_0,\widetilde{y}_h)_{L^2(\Omega)}\qquad (\mathbf{y}_h = (y_h,\widetilde{y}_h)\in Y_h),
\end{equation}
where $B_{\ep,h}$ denotes a bilinear form on $X_V\times Y_h$ defined as 
\begin{multline}
B_{\ep,h}(z,\mathbf{y}_h) =
 \displaystyle\int_{J}[(\partial_tz,y_h)_{L^2(\Omega)} +
 a_{\ep,h}(t;z,y_h)]~dt \\
 + (z(0),\widetilde{y}_h)_{L^2(\Omega)}\quad (z\in X_V,\mathbf{y}_h = (y_h,\widetilde{y}_h)\in Y_h).
\end{multline}
Hereinafter, we write $a_{\ep,h}(t;z,y_h)$ instead of
$a_{\ep,h}(t;z(t),y_h(t))$ for example. 

\subsection{Main results}
\label{sec:2c}

In this section, we state the main results presented in this paper, Theorems
\ref{Thm:Continuity_of_b}--\ref{Thm:ErrorEstimateinSpatialSemiDiscretization}.
For the penalty
parameter $\ep$, we make the following assumption.
\begin{assumption}
 \label{Assume:ep}
$\ep\ge 2\alpha^{-1}C^*\mu_1^2$.
\end{assumption}
Herein, the constants $\mu_1$ and $C^*$ appeared, respectively, in \eqref{eq:L4} and \eqref{Eq:Trace_Constant2}.
In the following theorems, we always presume that Assumptions
\ref{Assume:Ellipticity}--\ref{Assume:ep} are satisfied. 

Proofs of Theorems \ref{Thm:Continuity_of_b} and
\ref{Thm:Coercivity_of_b} will be explained in Section \ref{sec:4}.
We postpone presentation of the proofs of Theorems 
\ref{Thm:Continuity_of_B} and \ref{Thm:inf-sup_condition} for Section \ref{sec:5}. 
Theorem \ref{Thm:ErrorEstimateinSpatialSemiDiscretization} will be shown in Section \ref{sec:6}.
Other theorems are proved in this section. 

\begin{theorem}[Continuity of $a_{\ep,h}$]
\label{Thm:Continuity_of_b}
There exists a positive constant $C$ such that
\begin{equation}
a_{\ep,h}(t;w,v_h)\le C\|w\|_V\|v_h\|_{V_h}\quad(w\in V,\ v_h\in V_h,\ t\in J).
\end{equation}
In particular, there exists a positive constant $\widehat{M}$ such that
\begin{equation}
a_{\ep,h}(t;w_h,v_h)\le \widehat{M}\|w_h\|_{V_h}\|v_h\|_{V_h}\quad(w_h,v_h\in V_h,\ t\in J).
\end{equation}
Assumption \ref{Assume:ep} is not necessary for these inequalities to hold. 
\end{theorem}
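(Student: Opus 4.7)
The plan is to estimate each of the five terms defining $a_{\ep,h}(t;w,v_h)$ separately and then sum the bounds, targeting the norm $\|w\|_V \|v_h\|_{V_h}$. The second inequality, for $w_h \in V_h$, then follows immediately from the norm equivalence $\|v_h\|_{V_h} \le \|v_h\|_V \le C\|v_h\|_{V_h}$ that has already been established for functions in $V_h$.

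First, for the ``volume'' part $a(t;w,v_h)$, I will invoke the continuity bound \eqref{eq:a11}, which gives $|a(t;w,v_h)|\le M\|w\|_{H^1(\Omega)}\|v_h\|_{H^1(\Omega)}\le M\|w\|_V\|v_h\|_{V_h}$. Next, for the consistency term $\sum_E\int_E (n\cdot \mu\nabla w)v_h\,dS$, I apply Cauchy--Schwarz on each $E$, bound $|\mu|$ by $\mu_1$ via \eqref{eq:L4}, and use the trace inequality in Assumption \ref{Assume:inequalities}(i) applied to $\nabla w$ (which is legitimate since $w|_{K_E}\in H^2(K_E)$) to get
\[
\|\nabla w\|_{L^2(E)}^2 \le C_{\mathrm{Tr}} h_E^{-1}\bigl(|w|_{H^1(K_E)}^2 + h_{K_E}^2|w|_{H^2(K_E)}^2\bigr).
\]
Summing over $E\in\mathcal{E}_h^e$ and using Cauchy--Schwarz again in the sum produces a factor $\|w\|_V$ from the first argument and $(\sum_E h_E^{-1}\|v_h\|_{L^2(E)}^2)^{1/2}\le \|v_h\|_{V_h}$ from the second.

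The symmetrization term $\sum_E \int_E (n\cdot\mu\nabla v_h)w\,dS$ is handled analogously, but now I use \eqref{Eq:Trace_Constant2} to move the derivative off $v_h$:
\[
\|n\cdot\nabla v_h\|_{L^2(E)}\le (C^*)^{1/2}h_E^{-1/2}|v_h|_{H^1(K_E)},
\]
after which Cauchy--Schwarz produces $\|v_h\|_{H^1(\Omega)}$ paired with $(\sum_E h_E^{-1}\|w\|_{L^2(E)}^2)^{1/2}\le \|w\|_V$. The penalty term $\sum_E (\ep/h_E)\int_E w v_h\,dS$ is the easiest: by splitting $h_E^{-1}=h_E^{-1/2}h_E^{-1/2}$ and applying Cauchy--Schwarz, it is directly bounded by $\ep\|w\|_V\|v_h\|_{V_h}$.

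The only term needing a small observation is the inflow term $\int_{\Gamma_{\mathrm{in}}} b\cdot n\, w v_h\,dS$. Using $b\in L^\infty(J;L^\infty(\Omega)^d)$ and Cauchy--Schwarz on $\Gamma_{\mathrm{in}}\subset\Gamma=\bigcup_E E$, it is bounded by
\[
\|b\|_{L^\infty}\sum_{E\in\mathcal{E}_h^e} \|w\|_{L^2(E)}\|v_h\|_{L^2(E)} \le \|b\|_{L^\infty}\, h \Bigl(\sum_E h_E^{-1}\|w\|_{L^2(E)}^2\Bigr)^{1/2}\Bigl(\sum_E h_E^{-1}\|v_h\|_{L^2(E)}^2\Bigr)^{1/2},
\]
which is bounded by $C\|w\|_V\|v_h\|_{V_h}$ since $h\le \operatorname{diam}(\Omega)$. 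I expect no genuine obstacle here; the only subtle point is recognizing that the trace inequality must be applied with the $H^2(K_E)$-seminorm for general $w\in V$ (this is exactly why $\|v\|_V$ contains the term $h_K^2|v|_{H^2(K)}^2$), whereas for $v_h\in V_h$ we can use the sharper bound \eqref{Eq:Trace_Constant2}. Combining the five estimates yields the first inequality with some explicit constant $C = C(M,\mu_1,C_{\mathrm{Tr}},C^*,\|b\|_{L^\infty},\operatorname{diam}(\Omega),\ep)$, and the second inequality with $\widehat{M}$ is an immediate consequence via the norm equivalence on $V_h$. No lower bound on $\ep$ is used anywhere, confirming that Assumption \ref{Assume:ep} is unnecessary for continuity.
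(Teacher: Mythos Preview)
Your proposal is correct and follows essentially the same route as the paper's proof: both decompose $a_{\ep,h}$ into its five constituent terms, apply Cauchy--Schwarz edge by edge, and invoke the trace/inverse-type bounds (the paper packages the normal-derivative estimates into a separate Lemma~\ref{Lem:Estimate_for_Normal_Derivative}, which you effectively reprove inline). The only minor stylistic difference is your treatment of the inflow term---you insert factors $h_E^{1/2}h_E^{-1/2}$ and absorb it into the weighted edge part of the $V$ and $V_h$ norms, whereas the paper uses the standard global trace inequality $\|v\|_{L^2(\Gamma)}\le C\|v\|_{H^1(\Omega)}$; both are valid and lead to the same conclusion.
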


\begin{theorem}[Coercivity of $a_{\ep,h}$]
\label{Thm:Coercivity_of_b}
There exists a positive constant $\widehat{\alpha}$ such that
\begin{equation}
a_{\ep,h}(t;v_h,v_h)\ge \widehat{\alpha}\|v_h\|_{V_h}^2\quad(v_h\in V_h,\ t\in J).
\end{equation}
\end{theorem}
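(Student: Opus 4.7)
The plan is to expand $a_{\ep,h}(t;v_h,v_h)$ into the four groups of terms coming from its definition and bound each group separately: the volume part $a(t;v_h,v_h)$, the doubled symmetrization term $-2\sum_{E}\int_{E}(n\cdot\mu\nabla v_h)v_h\,dS$, the inflow term $-\int_{\Gamma_{\mathrm{in}}}(b\cdot n)v_h^{2}\,dS$, and the penalty term $\sum_{E}\int_{E}\frac{\ep}{h_E}v_h^{2}\,dS$. Coercivity will follow by absorbing the sign-indefinite symmetrization term into the $H^{1}$-coercivity coming from $a$ and into the penalty, via a Young splitting calibrated against Assumption \ref{Assume:ep}.

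First I would rewrite the volume part by integrating by parts in the advection contribution,
\[
(b\cdot\nabla v_h,\,v_h)_{L^{2}(\Omega)}
= -\tfrac{1}{2}(\nabla\cdot b,\,v_h^{2})_{L^{2}(\Omega)} + \tfrac{1}{2}\int_{\Gamma}(b\cdot n)v_h^{2}\,dS,
\]
and combine with \eqref{eq:L4} and (the bulk form of) \eqref{eq:L5} to get $a(t;v_h,v_h)\ge \alpha\|v_h\|_{H^{1}(\Omega)}^{2} + \tfrac{1}{2}\int_{\Gamma}(b\cdot n)v_h^{2}\,dS$. I would then pair this boundary contribution with the inflow term: writing $\Gamma = \Gamma_{\mathrm{in}}\cup(\Gamma\setminus\Gamma_{\mathrm{in}})$ and using $b\cdot n<0$ on $\Gamma_{\mathrm{in}}$, the combination $\tfrac{1}{2}\int_{\Gamma}(b\cdot n)v_h^{2}\,dS - \int_{\Gamma_{\mathrm{in}}}(b\cdot n)v_h^{2}\,dS = \tfrac{1}{2}\int_{\Gamma}|b\cdot n|v_h^{2}\,dS$ is nonnegative and can be discarded.

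The delicate step is the symmetrization term. For each $E\in\mathcal{E}_h^{e}$ I would use Young's inequality in the form
\[
2\left|\int_{E}(n\cdot\mu\nabla v_h)v_h\,dS\right|\le \delta\,h_E\|n\cdot\mu\nabla v_h\|_{L^{2}(E)}^{2} + \delta^{-1}h_E^{-1}\|v_h\|_{L^{2}(E)}^{2},
\]
and then invoke the analog of \eqref{Eq:Trace_Constant2} incorporating the factor $\mu_{1}^{2}$ from $\|\mu\|\le\mu_{1}$ (a consequence of \eqref{eq:L4}), namely $\|n\cdot\mu\nabla v_h\|_{L^{2}(E)}^{2}\le C^{*}\mu_{1}^{2}h_E^{-1}|v_h|_{H^{1}(K_E)}^{2}$. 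Summing over $E\in\mathcal{E}_h^{e}$ and using that each element $K_E$ is charged at most once, the aggregate bound becomes $\delta C^{*}\mu_{1}^{2}\|v_h\|_{H^{1}(\Omega)}^{2} + \delta^{-1}\sum_{E}h_E^{-1}\|v_h\|_{L^{2}(E)}^{2}$.

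Putting everything together I obtain
\[
a_{\ep,h}(t;v_h,v_h)\ge (\alpha - \delta C^{*}\mu_{1}^{2})\|v_h\|_{H^{1}(\Omega)}^{2} + (\ep - \delta^{-1})\sum_{E\in\mathcal{E}_h^{e}}h_E^{-1}\|v_h\|_{L^{2}(E)}^{2},
\]
and choose $\delta$ so both coefficients are strictly positive. Assumption \ref{Assume:ep} gives $1/\ep\le \alpha/(2C^{*}\mu_{1}^{2})$, so the interval $(1/\ep,\,\alpha/(C^{*}\mu_{1}^{2}))$ of admissible $\delta$ is nonempty; concretely $\delta=3\alpha/(4C^{*}\mu_{1}^{2})$ yields coefficients $\alpha/4$ and $\ep/3$, producing $\widehat{\alpha}=\min\{\alpha/4,\,\ep/3\}>0$ and the desired bound by the definition of $\|\cdot\|_{V_h}$. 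The main obstacle is precisely this choice of $\delta$: the constant $2$ in Assumption \ref{Assume:ep} is exactly what guarantees that the admissible window for $\delta$ is nonempty, so that the symmetrization term can be simultaneously absorbed into the $H^{1}$-coercivity and into the penalty.
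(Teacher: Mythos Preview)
Your proof is correct and follows essentially the same route as the paper: split the bulk form by integrating the advection term by parts to expose $\alpha\|v_h\|_{H^{1}}^{2}$ plus a boundary contribution, combine the latter with the inflow term to get a nonnegative remainder, and then control the symmetrization term via Young's inequality together with the discrete trace bound $\|n\cdot\mu\nabla v_h\|_{L^{2}(E)}^{2}\le C^{*}\mu_{1}^{2}h_E^{-1}|v_h|_{H^{1}(K_E)}^{2}$ (the paper packages this last step as Lemma~\ref{Lem:Estimate_for_Normal_Derivative} with constant $C_I=C^{*}\mu_{1}^{2}$). Your free choice of the Young parameter $\delta$ is in fact slightly sharper than the paper's fixed splitting, since it yields a strictly positive $\widehat{\alpha}$ even at the threshold $\ep=2\alpha^{-1}C^{*}\mu_{1}^{2}$, whereas the paper's choice gives $\widehat{\alpha}=\min\{\alpha/2,\,\ep-2\alpha^{-1}C_I\}$, which vanishes there.
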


\begin{theorem}[Continuity of $B_{\ep,h}$]
 \label{Thm:Continuity_of_B}
There exists a positive constant $C$ such that 
\begin{equation}
B_{\ep,h}(z,\mathbf{y}_h)\le C\|z\|_{X_{V}}\|\mathbf{y}_h\|_{Y_h}\quad (z\in X_{V},\ \mathbf{y}_h\in Y_h).
\end{equation}
Particularly, we have
\begin{equation}
\label{Eq:Continuityofbepsh}
B_{\ep,h}(z_h,\mathbf{y}_h)\le C\|z_h\|_{X_h}\|\mathbf{y}_h\|_{Y_h}\quad (z_h\in X_{h},\ \mathbf{y}_h\in Y_h).
\end{equation}
Assumption \ref{Assume:ep} is not necessary for these inequalities to hold. 
\end{theorem}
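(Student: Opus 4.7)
The plan is to bound $B_{\ep,h}(z,\mathbf{y}_h)$ term by term and then aggregate via Cauchy--Schwarz in time. Writing
\[
B_{\ep,h}(z,\mathbf{y}_h) = \int_J (\partial_t z, y_h)_{L^2(\Omega)}\,dt + \int_J a_{\ep,h}(t;z,y_h)\,dt + (z(0),\widetilde{y}_h)_{L^2(\Omega)},
\]
I would estimate these three contributions separately. For the first, since $y_h(t)\in V_h$ for a.e.\ $t$, the definition of $\|\cdot\|_{V_h'}$ yields $(\partial_t z, y_h)_{L^2(\Omega)} \le \|\partial_t z\|_{V_h'}\|y_h\|_{V_h}$ pointwise in $t$; a Cauchy--Schwarz in time then controls its time integral by $\left(\int_J\|\partial_t z\|_{V_h'}^2\,dt\right)^{1/2}\left(\int_J\|y_h\|_{V_h}^2\,dt\right)^{1/2}$. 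For the second, I would invoke the first inequality of Theorem~\ref{Thm:Continuity_of_b} to obtain $a_{\ep,h}(t;z,y_h)\le C\|z\|_V\|y_h\|_{V_h}$ pointwise, and again Cauchy--Schwarz in $t$ gives the bound $C\left(\int_J\|z\|_V^2\,dt\right)^{1/2}\left(\int_J\|y_h\|_{V_h}^2\,dt\right)^{1/2}$. For the third, the standard Cauchy--Schwarz inequality in $L^2(\Omega)$ yields $(z(0),\widetilde{y}_h)_{L^2(\Omega)}\le \|z(0)\|_{L^2(\Omega)}\|\widetilde{y}_h\|_{L^2(\Omega)}$.

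Combining these three bounds and applying one more finite-dimensional Cauchy--Schwarz to the sum of products, I would obtain
\[
B_{\ep,h}(z,\mathbf{y}_h)\le C\bigl(\|\partial_t z\|_{L^2(J;V_h')}^2 + \|z\|_{L^2(J;V)}^2 + \|z(0)\|_{L^2(\Omega)}^2\bigr)^{1/2}\bigl(\|y_h\|_{L^2(J;V_h)}^2 + \|\widetilde{y}_h\|_{L^2(\Omega)}^2\bigr)^{1/2},
\]
which is precisely $C\|z\|_{X_V}\|\mathbf{y}_h\|_{Y_h}$. For the restricted inequality \eqref{Eq:Continuityofbepsh}, I would simply observe that $X_h\subset X_V$ with $\|z_h\|_{X_V}\le C\|z_h\|_{X_h}$ (as stated in Section~\ref{sec:2a}), so the first assertion immediately implies the second.

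There is no real obstacle here: the argument is a straightforward concatenation of Cauchy--Schwarz (both the elementary and the $L^2(J)$ version) with the continuity of $a_{\ep,h}$ already established. I would note explicitly that Assumption~\ref{Assume:ep} enters nowhere: Theorem~\ref{Thm:Continuity_of_b} is stated without requiring it, and the other two estimates are purely functional-analytic, so the claim that the inequalities hold without Assumption~\ref{Assume:ep} follows for free. The only mild point worth care is making sure the bound on $(\partial_t z, y_h)_{L^2(\Omega)}$ is written using $\|\cdot\|_{V_h'}$ rather than $\|\cdot\|_{H^{-1}(\Omega)}$, which is exactly the norm appearing in the definition of $\|z\|_{X_V}$, so no additional embedding argument is needed.
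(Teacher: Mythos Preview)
Your proposal is correct and follows essentially the same approach as the paper: bound the three terms pointwise using the definition of $\|\cdot\|_{V_h'}$, Theorem~\ref{Thm:Continuity_of_b}, and Cauchy--Schwarz in $L^2(\Omega)$, then apply Cauchy--Schwarz in $t$ and use $\|z_h\|_{X_V}\le C\|z_h\|_{X_h}$ for the second assertion. The paper's proof is simply a more compressed version of exactly what you wrote.
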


\begin{theorem}[Inf-sup condition of $B_{\ep,h}$]
\label{Thm:inf-sup_condition}
There exists a positive constant $\beta$ such that
\begin{equation}
\label{Eq:inf_sup_condi}
\inf_{0\neq z_h\in X_{h}}\sup_{0\neq \mathbf{y}_h\in Y_h}\dfrac{B_{\ep,h}(z_h,\mathbf{y}_h)}{\|z_h\|_{X_h}\|\mathbf{y}_h\|_{Y_h}}\ge\beta.
\end{equation}
\end{theorem}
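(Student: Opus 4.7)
The plan is to use the Banach--Ne\v{c}as--Babu\v{s}ka strategy: for each $z_h\in X_h$ I would construct an explicit test pair $\mathbf{y}_h=(y_h,\widetilde y_h)\in Y_h$ of the form
\[
 y_h=z_h+\theta w_h,\qquad \widetilde y_h=z_h(0),
\]
with a small parameter $\theta>0$ to be chosen, such that $B_{\ep,h}(z_h,\mathbf{y}_h)\ge c_1\|z_h\|_{X_h}^2$ and $\|\mathbf{y}_h\|_{Y_h}\le c_2\|z_h\|_{X_h}$; the inf-sup constant is then $\beta=c_1/c_2$. The auxiliary function $w_h(t)\in V_h$ is defined as the Riesz representative of $\partial_t z_h(t)$ in $V_h$ with respect to the $V_h$-inner product, i.e.\ $(w_h(t),v_h)_{V_h}=(\partial_t z_h(t),v_h)_{L^2(\Omega)}$ for all $v_h\in V_h$. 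This gives the two identities that drive the estimate:
\[
 (\partial_t z_h(t),w_h(t))_{L^2(\Omega)}=\|w_h(t)\|_{V_h}^2=\|\partial_t z_h(t)\|_{V_h'}^2 .
\]

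Next I would plug $\mathbf{y}_h$ into $B_{\ep,h}(z_h,\mathbf{y}_h)$ and analyse the four resulting pieces. The term $\int_J(\partial_t z_h,z_h)_{L^2(\Omega)}\,dt$ is handled by the standard time integration by parts, yielding $\tfrac12\|z_h(T)\|_{L^2(\Omega)}^2-\tfrac12\|z_h(0)\|_{L^2(\Omega)}^2$; combined with $(z_h(0),z_h(0))_{L^2(\Omega)}=\|z_h(0)\|_{L^2(\Omega)}^2$ from the second component, this contributes $\tfrac12\|z_h(0)\|_{L^2(\Omega)}^2+\tfrac12\|z_h(T)\|_{L^2(\Omega)}^2$. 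The term $\theta\int_J(\partial_t z_h,w_h)_{L^2(\Omega)}\,dt$ equals $\theta\int_J\|\partial_t z_h\|_{V_h'}^2\,dt$ by the Riesz identity. The term $\int_J a_{\ep,h}(t;z_h,z_h)\,dt$ is bounded below by $\widehat\alpha\int_J\|z_h\|_{V_h}^2\,dt$ using Theorem~\ref{Thm:Coercivity_of_b}. The only sign-ambiguous term is the cross term $\theta\int_J a_{\ep,h}(t;z_h,w_h)\,dt$, which I estimate using continuity on $V_h\times V_h$ (Theorem~\ref{Thm:Continuity_of_b}) and Young's inequality:
\[
 \theta\,|a_{\ep,h}(t;z_h,w_h)|\le\theta\widehat M\|z_h\|_{V_h}\|\partial_t z_h\|_{V_h'}\le\tfrac{\widehat\alpha}{2}\|z_h\|_{V_h}^2+\tfrac{\theta^2\widehat M^2}{2\widehat\alpha}\|\partial_t z_h\|_{V_h'}^2 .
\]

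Choosing $\theta=\widehat\alpha/\widehat M^2$ then absorbs half of the coercivity contribution and leaves a positive multiple of $\int_J\|\partial_t z_h\|_{V_h'}^2\,dt$, so that
\[
 B_{\ep,h}(z_h,\mathbf{y}_h)\ge \tfrac12\|z_h(0)\|_{L^2(\Omega)}^2+\tfrac{\widehat\alpha}{2}\int_J\|z_h\|_{V_h}^2\,dt+\tfrac{\widehat\alpha}{2\widehat M^2}\int_J\|\partial_t z_h\|_{V_h'}^2\,dt\ge c_1\|z_h\|_{X_h}^2 .
\]
For the matching upper bound on the test norm I would use the triangle inequality and $\|w_h\|_{V_h}=\|\partial_t z_h\|_{V_h'}$ to get $\|\mathbf{y}_h\|_{Y_h}^2\le 2\int_J\|z_h\|_{V_h}^2\,dt+2\theta^2\int_J\|\partial_t z_h\|_{V_h'}^2\,dt+\|z_h(0)\|_{L^2(\Omega)}^2\le c_2\|z_h\|_{X_h}^2$, and dividing yields the claim with $\beta=c_1/\sqrt{c_2}$.

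I expect the main obstacle to be the treatment of the cross term $\theta\int_J a_{\ep,h}(t;z_h,w_h)\,dt$: getting the absorption argument to work forces one to use continuity in the strong $V_h$-norm of both arguments (not in the weaker $V$-norm), which is exactly why the second estimate of Theorem~\ref{Thm:Continuity_of_b} was isolated. Aside from that, some care is needed to verify that $w_h$ is genuinely in $L^2(J;V_h)$ (measurability of the Riesz map composed with $\partial_t z_h\in L^2(J;V_h)$, which reduces to continuity of a finite-dimensional linear map) so that $\mathbf{y}_h$ is an admissible test pair in $Y_h$.
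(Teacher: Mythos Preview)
Your proof is correct and follows the same overall Banach--Ne\v{c}as--Babu\v{s}ka template as the paper: for a given $z_h$ one builds a test pair combining $z_h$ itself with an auxiliary function that ``sees'' $\partial_t z_h$ in the dual norm, then balances coercivity against the cross term via Young's inequality.  The difference lies in the choice of that auxiliary function.  The paper takes $\mathbf{y}_h=(\mathcal{A}_{\ep,h}(t)^{-1}(\partial_t z_h)+\delta z_h,\,\delta z_h(0))$, where $\mathcal{A}_{\ep,h}(t)$ is the discrete operator associated with $a_{\ep,h}(t;\cdot,\cdot)$; this requires an extra lemma establishing $(\partial_t z_h,\mathcal{A}_{\ep,h}(t)^{-1}\partial_t z_h)_{L^2(\Omega)}\ge\widehat\alpha\widehat M^{-2}\|\partial_t z_h\|_{V_h'}^2$ and $\|\mathcal{A}_{\ep,h}(t)^{-1}\partial_t z_h\|_{V_h}\le\widehat\alpha^{-1}\|\partial_t z_h\|_{V_h'}$.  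You instead use the $V_h$-Riesz representative $w_h$ of $\partial_t z_h$, which is time-independent as an operator and delivers the exact identities $(\partial_t z_h,w_h)_{L^2(\Omega)}=\|w_h\|_{V_h}^2=\|\partial_t z_h\|_{V_h'}^2$ without any auxiliary lemma.  Your route is therefore slightly more elementary and yields marginally cleaner constants; the paper's problem-adapted operator $\mathcal{A}_{\ep,h}(t)^{-1}$ buys nothing extra here, though in other settings (e.g.\ tracking explicit dependence on the coefficients) it can be useful.  Both proofs rely on exactly the same ingredients from the paper---Theorem~\ref{Thm:Coercivity_of_b} for coercivity and the $V_h\times V_h$ bound in Theorem~\ref{Thm:Continuity_of_b} for the cross term---so your remark about why the second estimate of Theorem~\ref{Thm:Continuity_of_b} is singled out is well taken.
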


\begin{theorem}
The problem \textup{\textbf{(P$_{\ep,h}$)}} has a unique solution $u_{\varepsilon,h}\in X_h$.
\end{theorem}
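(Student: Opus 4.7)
The approach I would take is to reduce \textbf{(P$_{\ep,h}$)} to a linear initial-value problem in the finite-dimensional coefficient space of $V_h$. Fix a basis $\{\phi_1,\dots,\phi_N\}$ of $V_h$ and write $u_{\ep,h}(t)=\sum_{i=1}^N c_i(t)\phi_i$. Choosing $v_h=\phi_j$ in \eqref{Eq:Nitsche_for_each_time1} and $\widetilde v_h=\phi_j$ in \eqref{Eq:Nitsche_for_each_time2} converts the problem into
\begin{equation*}
M\dot{\mathbf{c}}(t)+A(t)\mathbf{c}(t)=\mathbf{F}(t),\qquad M\mathbf{c}(0)=\mathbf{c}_0,
\end{equation*}
where $M=\big((\phi_i,\phi_j)_{L^2(\Omega)}\big)_{ij}$ is the time-independent, symmetric, positive-definite mass matrix, $A(t)=\big(a_{\ep,h}(t;\phi_i,\phi_j)\big)_{ij}$, $\mathbf{F}(t)=\big(F_{\ep,h}(t;\phi_j)\big)_j$, and $\mathbf{c}_0=\big((u_0,\phi_j)_{L^2(\Omega)}\big)_j$. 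Invertibility of $M$ puts the system into standard form $\dot{\mathbf{c}}=-M^{-1}A(t)\mathbf{c}+M^{-1}\mathbf{F}(t)$.

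Next I would verify that the data possess the regularity required by the Carath\'eodory theorem for linear ODEs. Using Assumption \ref{Assume:Ellipticity}, the coefficients $\mu$, $b$, $c$ are (essentially) bounded on $\overline{\Omega\times J}$, so each entry of $A(t)$ belongs to $L^\infty(J)$; the boundary integral over the time-dependent inflow set is handled by rewriting $-\int_{\Gamma_{\mathrm{in}}(t)}(b\cdot n)\,wv_h\,dS=\int_\Gamma (b\cdot n)_-\,wv_h\,dS$, whose integrand is jointly measurable in $(x,t)$. Since $f\in L^2(J;L^2(\Omega))$ and $g\in G_0\subset L^2(J;H^{1/2}(\Gamma))$, every component of $\mathbf{F}(t)$ belongs to $L^2(J)$. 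Standard Carath\'eodory--Picard theory then produces a unique absolutely continuous $\mathbf{c}\in H^1(J;\mathbb{R}^N)$ solving the system, from which $u_{\ep,h}=\sum_i c_i\phi_i$ lies in $H^1(J;V_h)\subset X_h$ and satisfies \textbf{(P$_{\ep,h}$)} by linearity of both sides in $v_h$.

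For uniqueness one can alternatively give an intrinsic argument: if $u^{(1)}_{\ep,h}$ and $u^{(2)}_{\ep,h}$ are two solutions, their difference $z_h\in X_h$ satisfies $B_{\ep,h}(z_h,\mathbf{y}_h)=0$ for every $\mathbf{y}_h\in Y_h$ via the equivalent formulation \eqref{Eq:Problem_Nitsche}, and the inf-sup condition of Theorem \ref{Thm:inf-sup_condition} forces $z_h=0$. There is no serious obstacle here: once $t$ is fixed the problem is genuinely finite-dimensional, and the only real work is routine bookkeeping to confirm that the coefficient regularity in Assumption \ref{Assume:Ellipticity} is sufficient to invoke a classical ODE well-posedness result in the Carath\'eodory sense, which the hypotheses $\mu\in C^0$, $b\in L^\infty(J;W^{1,\infty})$, $c\in L^\infty(J;L^\infty)$, $f\in L^2(J;L^2(\Omega))$, $g\in G_0$ comfortably supply.
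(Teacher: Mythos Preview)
Your argument is correct, but it follows a different route from the paper. The paper stays entirely within the variational framework: it invokes the Banach--Ne\v{c}as--Babu\v{s}ka theorem, using the inf-sup condition (Theorem~\ref{Thm:inf-sup_condition}) for the first hypothesis and then verifying the second hypothesis, namely that $B_{\ep,h}(z_h,\mathbf{y}_h)=0$ for all $z_h\in X_h$ forces $\mathbf{y}_h=0$. The latter is checked by a clever choice of $z_h$ to kill $\widetilde{y}_h$, followed by the coercivity of $a_{\ep,h}(t;\cdot,\cdot)$ (Theorem~\ref{Thm:Coercivity_of_b}) to deduce $y_h=0$. By contrast, you exploit finite-dimensionality of $V_h$ head-on and reduce \textbf{(P$_{\ep,h}$)} to a linear Carath\'eodory ODE for the coefficient vector, where classical well-posedness applies; this is the more elementary and more commonly seen argument for semidiscrete parabolic schemes. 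Your approach has the advantage of not relying on the inf-sup condition for \emph{existence} (only optionally for uniqueness), whereas the paper's proof is attractive because it reuses exactly the structural theorems (continuity, coercivity, inf-sup) that drive the error analysis, keeping the whole development uniform in spirit.
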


\begin{proof}
It is sufficient to prove that  
\begin{equation}
 \label{eq:bnb2}
B_{\ep,h}(z_h,\mathbf{y}_h)=0\quad(\forall z_h\in X_h)\quad \Rightarrow
\quad \mathbf{y}_h =(y_{h},\widetilde{y}_{h})=0.
\end{equation}
Actually, using \eqref{Eq:inf_sup_condi} and \eqref{eq:bnb2}, we can apply the Banach--Ne\v{c}as--Babu\v{s}ka theorem (see
 \cite[theorem2.6]{eg04} for example) to deduce the conclusion.  
First, presuming that $z_h\in X_h$ satisfies 
\begin{equation*}
z_h(0)=\widetilde{y}_{h},\mbox{ and }z_h(t)=0\quad (t\ge \delta)
\end{equation*}
for all $\delta>0$, then we have $\widetilde{y}_{h}=0$.

 To prove $y_h=0$, we take the basis functions $\{\phi_i\}_{i=1}^{N}$ of
 $V_h$, where $N:=\operatorname{dim}V_h$ and let
\begin{equation*}
y_{h}(t):=\displaystyle\sum_{i=1}^Na_i(t)\phi_i.
\end{equation*}
Then, $B_{\ep,h}\left(z_h,(y_{h},0)\right)=0$ implies
\begin{equation*}
\mathbf{A}_{\ep,h}(t)\mathbf{a}(t) =\mathbf{0},
\end{equation*}
where
 $\left(\mathbf{A}_{\ep,h}(t)\right)_{i,j}:=a_{\ep,h}(t;\phi_i,\phi_j)$
 and $\mathbf{a}(t) := (a_1(t), \dots, a_N(t))^{\mathrm{T}}$.
In view of the coercivity of $a_{\ep,h}(t;\cdot,\cdot)$ (Theorem
 \ref{Thm:Coercivity_of_b}), we obtain $\mathbf{a}(t)=\mathbf{0}$ for
 $t\in J$. This implies $y_{h}=0$; \eqref{eq:bnb2} is proved. 

\end{proof}

\begin{theorem}[Galerkin orthogonality]
\label{th:go}
 Letting $u_{\varepsilon,h}\in X_h$ be the solution of
 \textup{\textbf{(P$_{\ep,h}$)}}, then if the solution $u$ of
 \eqref{eq:1} satisfies $u\in X_V$, we have
\begin{equation}
B_{\ep,h}(u-u_{\ep,h},\mathbf{y}_h) = 0\quad(\mathbf{y}_h\in Y_h).
\end{equation}
\end{theorem}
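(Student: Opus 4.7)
The plan is to verify directly that the exact solution $u$, when substituted into the bilinear form $B_{\ep,h}$, produces the same right-hand side as in the Nitsche equation \textup{\textbf{(P$_{\ep,h}$)}}; subtracting the defining identity of $u_{\ep,h}$ then yields the orthogonality. The argument is essentially a pointwise-in-$t$ calculation that uses the strong form \eqref{eq:1a}, the Dirichlet condition \eqref{eq:1b}, and the initial condition \eqref{eq:1c}.

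First, by Lemma \ref{la:00} we have $\partial_t u + L(t)u = f$ in $L^2(\Omega)$ for a.e.\ $t\in J$. Multiplying by an arbitrary $y_h(t)\in V_h$ and integrating over $\Omega$ gives
\[
(\partial_t u,y_h)_{L^2(\Omega)} + (L(t)u,y_h)_{L^2(\Omega)} = (f,y_h)_{L^2(\Omega)}.
\]
Next I integrate the diffusion term by parts. Since $-\nabla\cdot(\mu\nabla u)\in L^2(\Omega)$ as a distribution on the whole domain, the field $\mu\nabla u$ lies in $H(\operatorname{div};\Omega)$, so the standard Green identity produces
\[
(L(t)u,y_h)_{L^2(\Omega)} = a(t;u,y_h) - \sum_{E\in\mathcal{E}_h^e}\int_E (n\cdot\mu\nabla u)\,y_h\,dS,
\]
where the traces $n\cdot\mu\nabla u|_E\in L^2(E)$ are honest $L^2$-functions because of the hypothesis $u(t)\in V$. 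Using the Dirichlet condition $u=g$ on $\Gamma$, I then replace $u$ by $g$ in the symmetry, inflow, and penalty boundary terms of $a_{\ep,h}(t;u,y_h)$, after which a short rearrangement yields
\[
(\partial_t u,y_h)_{L^2(\Omega)} + a_{\ep,h}(t;u,y_h) = F_{\ep,h}(t;y_h)\qquad \text{for a.e. } t\in J.
\]

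Integrating this identity in $t$ over $J$ and appending the scalar identity $(u(0),\widetilde y_h)_{L^2(\Omega)} = (u_0,\widetilde y_h)_{L^2(\Omega)}$, which is meaningful because $u\in X_V\subset C^0(\overline J;L^2(\Omega))$ and \eqref{eq:1c} holds in $L^2(\Omega)$, I obtain exactly the right-hand side of \eqref{Eq:Problem_Nitsche} for every $\mathbf{y}_h=(y_h,\widetilde y_h)\in Y_h$. Subtracting the defining equation of $u_{\ep,h}$ then gives $B_{\ep,h}(u-u_{\ep,h},\mathbf{y}_h)=0$, which is the desired conclusion.

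The only delicate point is the elementwise-versus-global integration by parts: a naive sum over $K\in\mathcal{T}_h$ would produce interior-edge contributions involving the normal-flux jumps of $\mu\nabla u$. These jumps vanish because $-\nabla\cdot(\mu\nabla u)$ has no singular part on interior edges; it is an $L^2$-function on all of $\Omega$ by Lemma \ref{la:00}, even though $u$ is only piecewise $H^2$ under the hypothesis $u\in X_V$. Once this cancellation is noted, the rest of the argument is straightforward bookkeeping, so I do not anticipate any further obstacle.
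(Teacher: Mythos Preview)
Your proof is correct and follows essentially the same route as the paper: both verify that the exact solution $u$ satisfies the Nitsche variational identity \eqref{Eq:Problem_Nitsche} by invoking Lemma~\ref{la:00} for the strong form $\partial_t u + L(t)u = f$, integrating by parts, and using $u=g$ on $\Gamma$ together with $u(0)=u_0$, then subtracting the defining equation for $u_{\ep,h}$. Your explicit justification that $\mu\nabla u\in H(\operatorname{div};\Omega)$---and hence that interior-edge flux jumps vanish despite $u$ being only piecewise $H^2$---is a point the paper's proof leaves implicit, so your write-up is in fact slightly more careful.
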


\begin{proof}
Noting Lemma \ref{la:00}, we have
\begin{align*}
B_{\ep,h}(u-u_{\ep,h},\mathbf{y}_h) &= \displaystyle\int_J \left[(\partial_tu,y_h)_{L^2(\Omega)}+a_{\ep,h}(t;u,y_h)\right]~dt + (u(0),\widetilde{y}_h)_{L^2(\Omega)}\\
&\qquad - \displaystyle\int_JF_{\ep,h}(t;y_h)~dt + (u_0,\widetilde{y}_h)_{L^2(\Omega)}\\
&=\displaystyle\int_J\left[ (\partial_tu+L(t)u-f,y_h)_{L^2(\Omega)} - \displaystyle\sum_{E\in\mathcal{E}_h^e}\int_E(n\cdot\mu\nabla y_h)(u-g)~dS\right.\\
&\qquad\left. - \displaystyle\int_{\Gamma_{\mathrm{in}}}b\cdot n(u-g)y_h~dS + \displaystyle\sum_{E\in\mathcal{E}_h^e}\int_E\dfrac{\ep}{h_E}(u-g)y_h~dS~\right]dt\\
&=0
\end{align*}
for any $\mathbf{y}_h = (y_h,\widetilde{y}_h)\in Y_h$.
\end{proof}

\begin{theorem}[stability]
\label{th:st}
 Let $u_{\varepsilon,h}\in X_h$ be the solution of
 \textup{\textbf{(P$_{\ep,h}$)}}. If the solution $u$ of
 \eqref{eq:1} satisfies $u\in X_V$, then we have
\begin{equation}
\|u_{\ep,h}\|_{X_h} \le C\|u\|_{X_V}.
\end{equation}
\end{theorem}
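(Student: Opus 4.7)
The plan is to prove the bound by the standard Banach--Ne\v{c}as--Babu\v{s}ka three-line chain: inf-sup, then Galerkin orthogonality, then continuity of $B_{\ep,h}$. Since $u_{\ep,h}\in X_h$ by construction, the inf-sup condition of Theorem \ref{Thm:inf-sup_condition} applies directly with $z_h=u_{\ep,h}$; I only need to estimate $B_{\ep,h}(u_{\ep,h},\mathbf{y}_h)$ from above in terms of $\|u\|_{X_V}\|\mathbf{y}_h\|_{Y_h}$, which is precisely where the hypothesis $u\in X_V$ enters.

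Concretely, I would proceed in three steps. First, assuming $u_{\ep,h}\ne 0$ (otherwise the claim is trivial), Theorem \ref{Thm:inf-sup_condition} gives
\[
\beta\,\|u_{\ep,h}\|_{X_h}\le\sup_{0\ne\mathbf{y}_h\in Y_h}\frac{B_{\ep,h}(u_{\ep,h},\mathbf{y}_h)}{\|\mathbf{y}_h\|_{Y_h}}.
\]
Second, because $u\in X_V$ by hypothesis, Galerkin orthogonality (Theorem \ref{th:go}) is available and yields $B_{\ep,h}(u_{\ep,h},\mathbf{y}_h)=B_{\ep,h}(u,\mathbf{y}_h)$ for every $\mathbf{y}_h\in Y_h$. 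Third, I invoke the continuity of $B_{\ep,h}$ on $X_V\times Y_h$ from Theorem \ref{Thm:Continuity_of_B} applied to $(u,\mathbf{y}_h)$:
\[
B_{\ep,h}(u,\mathbf{y}_h)\le C\,\|u\|_{X_V}\,\|\mathbf{y}_h\|_{Y_h}.
\]
Substituting into the inf-sup bound, dividing by $\|\mathbf{y}_h\|_{Y_h}$ and absorbing $\beta^{-1}$ into the constant gives $\|u_{\ep,h}\|_{X_h}\le C\,\|u\|_{X_V}$.

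There is no serious obstacle: all the heavy lifting has been carried out in Theorems \ref{Thm:Continuity_of_B}, \ref{Thm:inf-sup_condition}, and \ref{th:go}. The one point worth emphasising is that Galerkin orthogonality is phrased for the difference $u-u_{\ep,h}$, which does not lie in $X_h$; it is therefore essential that the continuity estimate of Theorem \ref{Thm:Continuity_of_B} is formulated on the larger space $X_V$, so that it can be applied to the exact solution $u$ and close the chain.
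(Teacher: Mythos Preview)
Your proposal is correct and follows exactly the same chain as the paper's proof: inf-sup (Theorem~\ref{Thm:inf-sup_condition}), then Galerkin orthogonality (Theorem~\ref{th:go}) to swap $u_{\ep,h}$ for $u$, then continuity on $X_V\times Y_h$ (Theorem~\ref{Thm:Continuity_of_B}). Your remark that continuity must be stated on the larger space $X_V$ so it can absorb the exact solution $u$ is precisely the point; the paper's proof is the one-line display of this chain.
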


\begin{proof}
Combining Theorems  \ref{Thm:Continuity_of_B},
 \ref{Thm:inf-sup_condition} and \ref{th:go}, we have 
 \[
    \|u_{\ep,h}\|_{X_h}
  \le \frac{1}{\beta}\sup_{\mathbf{y}_h\in Y_h}\frac{B_{\ep,h}(u_{\ep,h},\mathbf{y}_h)}{\|\mathbf{y}_h\|_{Y_h}}
  = \frac{1}{\beta}\sup_{\mathbf{y}_h\in
 Y_h}\frac{B_{\ep,h}(u,\mathbf{y}_h)}{\|\mathbf{y}_h\|_{Y_h}}\le \frac{1}{\beta}C\|u\|_{X_V}.
 \]

\end{proof}

\begin{theorem}[best approximation property]
\label{Lem:estimate_from_consistency}
If the solution $u$ of \eqref{eq:1} satisfies $u\in X_V$, then there exists a positive constant $C$ such that
\begin{equation}
\|u-u_{\varepsilon,h}\|_{X_h}\le C\inf_{z_h\in X_h}\|u-z_h\|_{X_V}.
\end{equation}
\end{theorem}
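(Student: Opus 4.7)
The strategy is the standard Strang/Céa-type argument adapted to the parabolic Petrov--Galerkin setting, built on the three ingredients already established: the inf-sup condition (Theorem \ref{Thm:inf-sup_condition}), the continuity of $B_{\ep,h}$ on the oversized space $X_V\times Y_h$ (Theorem \ref{Thm:Continuity_of_B}), and the Galerkin orthogonality (Theorem \ref{th:go}).

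First, for an arbitrary $z_h\in X_h$, I would split
\[
u - u_{\ep,h} = (u - z_h) + (z_h - u_{\ep,h})
\]
and apply the triangle inequality in the $X_h$-norm. The norm $\|\cdot\|_{X_h}$ is given by a formula that only involves quantities well-defined for any element of $X_V$, and a direct comparison of the defining formulas yields $\|w\|_{X_h}\le \|w\|_{X_V}$ for every $w\in X_V$. In particular, $\|u - z_h\|_{X_h}\le \|u - z_h\|_{X_V}$, which takes care of the first summand.

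For the discrete error $z_h - u_{\ep,h}\in X_h$, I would invoke the inf-sup condition (Theorem \ref{Thm:inf-sup_condition}) to write
\[
\beta\,\|z_h - u_{\ep,h}\|_{X_h} \le \sup_{0\neq\mathbf{y}_h\in Y_h}\frac{B_{\ep,h}(z_h - u_{\ep,h},\mathbf{y}_h)}{\|\mathbf{y}_h\|_{Y_h}}.
\]
Using bilinearity and Galerkin orthogonality $B_{\ep,h}(u-u_{\ep,h},\mathbf{y}_h)=0$, I replace $z_h - u_{\ep,h}$ by $z_h - u$ inside $B_{\ep,h}$, and then apply continuity of $B_{\ep,h}$ on $X_V\times Y_h$ from Theorem \ref{Thm:Continuity_of_B} to get
\[
B_{\ep,h}(z_h - u_{\ep,h},\mathbf{y}_h)=B_{\ep,h}(z_h - u,\mathbf{y}_h)\le C\,\|u-z_h\|_{X_V}\,\|\mathbf{y}_h\|_{Y_h}.
\]
Combining these bounds yields $\|z_h - u_{\ep,h}\|_{X_h}\le (C/\beta)\|u-z_h\|_{X_V}$, and then the triangle inequality gives $\|u - u_{\ep,h}\|_{X_h}\le (1+C/\beta)\|u - z_h\|_{X_V}$. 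Taking the infimum over $z_h\in X_h$ produces the desired estimate.

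There is no real obstacle here: all the non-trivial work has already been absorbed into Theorems \ref{Thm:Continuity_of_B}, \ref{Thm:inf-sup_condition}, and \ref{th:go}. The only point that requires a brief comment is the use of continuity of $B_{\ep,h}$ on the larger space $X_V\times Y_h$ (rather than only on $X_h\times Y_h$), which is exactly why Theorem \ref{Thm:Continuity_of_B} was stated in that stronger form; this is what allows the best-approximation error to be measured in the $X_V$-norm on the right-hand side.
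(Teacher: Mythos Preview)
Your argument is correct and is essentially identical to the paper's: the paper also bounds $\|z_h-u_{\ep,h}\|_{X_h}$ via the inf-sup condition, replaces $z_h-u_{\ep,h}$ by $z_h-u$ inside $B_{\ep,h}$ using Galerkin orthogonality, applies the $X_V\times Y_h$ continuity of $B_{\ep,h}$, and finishes with the triangle inequality. The only cosmetic difference is that the paper phrases the first three steps as ``in exactly the same way as the proof of Theorem~\ref{th:st}'' rather than writing them out.
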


\begin{proof}
 In exactly the same way as the proof of Theorem \ref{th:st}, we have for
 any $z_h\in V_h$ 
\[
 \|z_h-u_{\ep,h}\|_{X_h} \le C\|z_h-u\|_{X_V}.
\]
This, together with the triangle inequality, implies the desired
 estimate. 

\end{proof}

\begin{theorem}[optimal order error estimate]
\label{Thm:ErrorEstimateinSpatialSemiDiscretization}
 Letting $l$ and $m$ be
 integers with $2\le l,m\le k+1$ and letting
 $u\in X_{l,m}:=H^1(J;H^{l}(\Omega),H^{m}(\Omega))$ be the solution of
 \eqref{eq:1}, we have
\begin{equation}
\|u-u_{\varepsilon,h}\|_{X_h}^2\le C\left(h^{2(l-1)}\|u\|_{L^2(J;H^{l}(\Omega))}^2+h^{2m}\|\partial_tu\|_{L^2(J;H^{m}(\Omega))}^2+h^{2j}\|u(0)\|_{H^{j}(\Omega)}^2\right),
\end{equation}
where $j:=\min\{l,m\}$.
\end{theorem}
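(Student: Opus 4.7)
The plan is to reduce everything to the best approximation property of Theorem \ref{Lem:estimate_from_consistency} and then to the interpolation error estimates in Assumption \ref{Assume:inequalities}(iii). More precisely, I would start from
\[
\|u-u_{\ep,h}\|_{X_h}\le C\inf_{z_h\in X_h}\|u-z_h\|_{X_V}
\]
and choose the specific candidate $z_h(t):=\Pi_h u(t)$, where $\Pi_h$ is the projection operator in Assumption \ref{Assume:inequalities}(iii) applied pointwise in $t$. Since $\Pi_h$ is a bounded linear operator independent of $t$, it commutes with $\partial_t$, so $\partial_t z_h=\Pi_h(\partial_t u)$, and one checks routinely that $z_h\in H^1(J;V_h,V_h)=X_h$, justifying this choice.

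Next I would expand the $X_V$-norm according to its definition and bound the three contributions separately:
\[
\|u-\Pi_h u\|_{X_V}^2=\int_J\|u-\Pi_h u\|_V^2\,dt+\int_J\|\partial_t(u-\Pi_h u)\|_{V_h'}^2\,dt+\|(u-\Pi_h u)(0)\|_{L^2(\Omega)}^2.
\]
For the first integral, $\|u-\Pi_h u\|_V^2$ splits into the $H^1(\Omega)$ term, the element-wise $h_K^2|\cdot|_{H^2(K)}^2$ term, and the edge term $\sum_E h_E^{-1}\|\cdot\|_{L^2(E)}^2$. The first two are bounded at level $l$ directly by Assumption \ref{Assume:inequalities}(iii), and the edge term is bounded by combining the trace inequality (Assumption \ref{Assume:inequalities}(i)) and Assumption \ref{Assume:EqualityatExternalElement} with the same interpolation estimate; each yields a factor $h^{2(l-1)}\|u\|_{H^l(\Omega)}^2$, so integration in $t$ produces $C h^{2(l-1)}\|u\|_{L^2(J;H^l(\Omega))}^2$. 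For the second integral, I would use $\partial_t(u-\Pi_h u)=\partial_t u-\Pi_h\partial_t u$, the trivial bound $\|\cdot\|_{V_h'}\le \|\cdot\|_{L^2(\Omega)}$, and the $j=0$ case of the interpolation estimate at level $m$, giving $C h^{2m}\|\partial_t u\|_{L^2(J;H^m(\Omega))}^2$.

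For the initial-value term, note that $u\in X_{l,m}=H^1(J;H^l(\Omega),H^m(\Omega))$ embeds continuously into $C^0(\overline J;H^j(\Omega))$ (with $j=\min\{l,m\}$) via the Bochner--Sobolev embedding cited in Section \ref{sec:2z} (applied after identifying the appropriate Gelfand triple), so $u(0)\in H^j(\Omega)$ makes sense, and Assumption \ref{Assume:inequalities}(iii) at index $j$ gives $\|(u-\Pi_h u)(0)\|_{L^2(\Omega)}\le C h^j\|u(0)\|_{H^j(\Omega)}$. Summing the three estimates produces the desired bound.

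The only non-routine point is the edge contribution $\sum_{E\in\mathcal{E}_h^e}h_E^{-1}\|u-\Pi_h u\|_{L^2(E)}^2$, since Assumption \ref{Assume:inequalities}(i) is stated for functions in $H^1(K_E)$ rather than for the error itself directly; one has to apply the trace inequality to $u-\Pi_h u$ on $K_E$, insert the $H^1$ and $h_{K_E}|\cdot|_{H^2}$ interpolation bounds, and absorb the factor $h_E^{-1}h_{K_E}$ using \eqref{eq:ke}. Everything else is a direct assembly, so this is the main (modest) technical step.
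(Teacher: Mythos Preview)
Your proposal is correct and follows essentially the same route as the paper: apply the best approximation property (Theorem~\ref{Lem:estimate_from_consistency}), plug in $z_h=\Pi_h u$, expand the $X_V$-norm into its three pieces, and bound each via Assumption~\ref{Assume:inequalities}(iii), the trivial inequality $\|\cdot\|_{V_h'}\le\|\cdot\|_{L^2(\Omega)}$, and the embedding $X_{l,m}\subset C^0(\overline J;H^j(\Omega))$. The only cosmetic difference is that the paper packages your ``non-routine'' edge-term step (trace inequality plus \eqref{eq:ke} plus interpolation estimate) into a standalone lemma $\|v-\Pi_h v\|_V\le C h^{l-1}\|v\|_{H^l(\Omega)}$ before the proof, whereas you derive it inline.
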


\section{Concrete examples of finite dimensional subspace}
\label{sec:3}

In this section, we give two concrete examples of the finite
dimensional subspace $V_h$ of $H^1(\Omega)$.

\subsection{Finite element method}

Letting $\Omega\subset\mathbb{R}^d$ be a polygonal domain and
introducing the triangulation $\mathcal{T}_h$ of $\Omega$, we consider
the standard continuous $\mathbb{P}^k$ finite element space
\begin{equation*}
V_h=\{v_h\in C^0(\overline{\Omega}) :
 v_h|_K\in\mathbb{P}_k\quad(K\in\mathcal{T}_h)\}.
\end{equation*}
It is readily apparent that $V_h\subset H^1(\Omega)$ and that Assumptions 
\ref{Assume:EqualityatExternalElement} and
\ref{Assume:Local_Regularity} are satisfied. 

Assuming that the family of triangulations $\{\mathcal{T}_h\}_h$ is
regular (\cite[(4.4.16)]{bs08} for example) with satisfaction of the inverse
assumption (\cite[(4.4.15)]{bs08} for example), then Assumption
\ref{Assume:inequalities} is satisfied. In summary, our results are
applicable to standard finite
element method.    

\subsection{Iso-Geometric Analysis}

Isogeometric analysis describes a computational domain by the so-called NURBS geometry. Furthermore, the finite dimensional subspace in the Galerkin method is introduced directly using the NURBS mesh. Here, we will review the definition and properties of NURBS.

\paragraph{Univariate B-spline basis functions on $[0,1]$}
We designate a vector $\Xi := \{\xi_1,\xi_2,\dots,\xi_r\}$ the \textit{knot vector} if
\begin{equation}
\xi_1 \leq \xi_2 \leq \dots \leq \xi_r.
\end{equation}
It is noteworthy that repetition of the knots is allowed. 
Without loss of generality, we let $\xi_1 = 0$ and $\xi_r = 1$.
Let $k$ be a given positive integer. Then, the univariate B-spline functions of degree $k$ associated with the knot vector $\Xi$ are defined by the Cox -- de Boor algorithm.
\begin{definition}
Let $\Xi = \{\xi_1, \dots, \xi_r\}$ be a knot vector. Then the $k$-th degree B-spline basis functions $\widehat{B}_{i,k}$ are defined as
\begin{equation}
\widehat{B}_{i,0}(\widehat{x}) = \left\{\begin{array}{ll}
1 &\mbox{ if }\xi_i\leq \widehat{x}\leq \xi_{i+1}\\
0 &\mbox{ otherwise}
\end{array}\right.\quad (k=0),
\end{equation}
\begin{equation}
\widehat{B}_{i,k}(\widehat{x}) = \frac{\widehat{x}-\xi_i}{\xi_{i+k}-\xi_i}\widehat{B}_{i,k-1}(\widehat{x}) + \frac{\xi_{i+k+1}-\widehat{x}}{\xi_{i+k+1}-\xi_{i+1}}\widehat{B}_{i+1,k-1}(\widehat{x})\quad (k\geq1),
\end{equation}
with $i = 1, \dots, r-k-1$. Here, $0/0 = 0$ should be replaced by $0$ in this definition.  
\end{definition}
We state some properties of the B-spline basis functions of degree $k$. They are non-negative $k$-th degree piecewise polynomials such that $\widehat{B}_{i,k}(\widehat{x})=0$ for $\widehat{x}\not\in[\xi_i,\xi_{i+k+1}]$. Now we introduce an alternative representation of $\Xi$ to state the other properties. Let
\begin{equation}
\Xi = \{\underbrace{\zeta_0,\dots,\zeta_0}_{m_0\mbox{ times}},\underbrace{\zeta_1\dots,\zeta_1}_{m_1\mbox{ times}},\dots,\underbrace{\zeta_N,\dots,\zeta_N}_{m_N\mbox{ times}}\},
\end{equation}
where $\zeta_0< \zeta_1< \dots< \zeta_N$. Therein we designate the multiplicity of $\zeta_n$ by $m_n$. 
Assume that $m_n\leq k+1$ for all knots, then $\widehat{B}_{i,k}$ has $k-m_n$ continuous derivatives at internal node $\zeta_n$. 
Furthermore, one can say that the knot vector $\Xi$ is $k$-open if $m_0 = m_N = k+1$. 
Letting $\Xi$ be a $k$-open knot vector, then $\widehat{B}_{i,k}$ form the partition of unity. They also form the basis of \textit{spline space}, i.e., the space of piecewise polynomials of degree $k$ with $k-m_n$ continuous derivatives at $\zeta_n$ for $n = 1, \dots, N-1$.
 
Henceforth, we assume the knot vector $\Xi$ is $k$-open. We define the \textit{univariate spline} $S_k(\Xi)$ as
\begin{equation}
S_k(\Xi) = \mathrm{span}\{\widehat{B}_{i,k}\mid i = 1, \dots, r-k-1\}.
\end{equation}

For partition size $h_n = \zeta_n-\zeta_{n-1}$, the following assumption is needed.
\begin{assumption}[Local quasi-uniform]
\label{local_quasi-uniformity}
The knot vector $\Xi$ is locally quasi-uniform, i.e., there exists a constant $\theta \geq 1$ such that
\begin{equation}
\dfrac{1}{\theta}\leq \theta_n = \dfrac{h_n}{h_{n+1}}\leq \theta\quad(n=1,\cdots, N-1).
\end{equation}
\end{assumption}

Here we mention that the \textit{quasi-interpolant operator} $\Pi_{k,\Xi}: L^\infty(I)\to S_k(\Xi)$ satisfies the error estimate (as described in greater detail in \cite[Chapter 4]{schum07}). Letting
\begin{equation}
I_n = (\zeta_{n-1}, \zeta_n), \ h_n = |I_n|,\ \widetilde{I}_n = \bigcup\left\{\mathrm{supp}\widehat{B}_{i,p}\mid \widehat{B}_{i,p}|_{I_n} \not\equiv 0\right\}, \ \widetilde{h}_n = |\widetilde{I}_n|,
\end{equation}
then the following estimate holds.
\begin{lemma}[Error estimate]
Let $s$ be a positive integer, $p\in[1,\infty]$ and $l=\min\{k+1,s\}$. Then there exists a positive constant $C$ such that
\begin{equation}
\|f-\Pi_{k,\Xi}(f)\|_{L^p(I_n)} \leq C\widetilde{h}_n^{l}|f|_{W^{l,p}(\widetilde{I}_n)}\quad(f\in W^{s,p}(I),\ n=1,\cdots, N).
\end{equation}
Moreover, let Assumption \ref{local_quasi-uniformity} be satisfied and let $m$ be an integer with $0\le m\le l$. Then there exists a constant $C$ such that
\begin{equation}
|f-\Pi_{k,\Xi}(f)|_{W^{m,p}(I_n)}\leq C\widetilde{h}_n^{l-m}|f|_{W^{l,p}(\widetilde{I}_n)}\quad(f\in W^{s,p}(I),\ n=1,\cdots, N).
\end{equation}
\end{lemma}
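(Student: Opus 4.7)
The plan is to follow the classical Bramble--Hilbert route on the local patch $\widetilde{I}_n$, exploiting two structural features of the quasi--interpolant $\Pi_{k,\Xi}$ recorded in \cite{schum07}: locality (values of $\Pi_{k,\Xi}f$ on $I_n$ depend only on $f|_{\widetilde{I}_n}$) and polynomial reproduction ($\Pi_{k,\Xi}q=q$ for every polynomial $q$ of degree $\le k$). Together with a uniform local $L^p$ stability bound
\begin{equation*}
\|\Pi_{k,\Xi}f\|_{L^p(I_n)}\le C\|f\|_{L^p(\widetilde{I}_n)}\qquad (f\in L^p(I)),
\end{equation*}
which is the content of \cite[Theorem 4.41]{schum07} and holds with a constant depending only on $k$, these are the only ingredients required for the first estimate.

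For the first inequality, I would fix $n$ and, given $f\in W^{s,p}(I)$, pick a polynomial $q\in\mathbb{P}_{l-1}$ on $\widetilde{I}_n$ achieving the Deny--Lions / Bramble--Hilbert bound
\begin{equation*}
\|f-q\|_{L^p(\widetilde{I}_n)}+\widetilde{h}_n|f-q|_{W^{1,p}(\widetilde{I}_n)}+\dots+\widetilde{h}_n^{l}|f-q|_{W^{l,p}(\widetilde{I}_n)}\le C\widetilde{h}_n^{l}|f|_{W^{l,p}(\widetilde{I}_n)}.
\end{equation*}
Because $l=\min\{k+1,s\}\le k+1$, polynomial reproduction gives $\Pi_{k,\Xi}q=q$, hence
\begin{equation*}
f-\Pi_{k,\Xi}f = (f-q)-\Pi_{k,\Xi}(f-q)\quad\text{on }I_n.
\end{equation*}
Applying the triangle inequality and the local $L^p$ stability of $\Pi_{k,\Xi}$ on $\widetilde{I}_n$ then yields
\begin{equation*}
\|f-\Pi_{k,\Xi}f\|_{L^p(I_n)}\le (1+C)\|f-q\|_{L^p(\widetilde{I}_n)}\le C\widetilde{h}_n^{l}|f|_{W^{l,p}(\widetilde{I}_n)},
\end{equation*}
which is the first claim.

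For the $W^{m,p}$ estimate with $0\le m\le l$, I would use the same decomposition $f-\Pi_{k,\Xi}f=(f-q)-\Pi_{k,\Xi}(f-q)$ and estimate the two pieces separately. The term $|f-q|_{W^{m,p}(I_n)}$ is bounded by $C\widetilde{h}_n^{l-m}|f|_{W^{l,p}(\widetilde{I}_n)}$ directly from the Bramble--Hilbert inequality. For $|\Pi_{k,\Xi}(f-q)|_{W^{m,p}(I_n)}$, since $\Pi_{k,\Xi}(f-q)$ is a spline of degree $k$ on the (locally quasi-uniform, by Assumption \ref{local_quasi-uniformity}) mesh, I would invoke the inverse inequality for splines
\begin{equation*}
|s|_{W^{m,p}(I_n)}\le C h_n^{-m}\|s\|_{L^p(I_n)}\qquad (s\in S_k(\Xi)),
\end{equation*}
whose constant is independent of $n$ precisely because of Assumption \ref{local_quasi-uniformity} (this is where $\theta$ enters). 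Combining the inverse inequality with the already established $L^p$ bound and using $\widetilde{h}_n\le C(\theta,k)h_n$ furnishes
\begin{equation*}
|\Pi_{k,\Xi}(f-q)|_{W^{m,p}(I_n)}\le Ch_n^{-m}\|f-q\|_{L^p(\widetilde{I}_n)}\le C\widetilde{h}_n^{l-m}|f|_{W^{l,p}(\widetilde{I}_n)}.
\end{equation*}

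The main technical obstacle is the uniform-in-$n$ stability of $\Pi_{k,\Xi}$ in $L^p(\widetilde{I}_n)$ and the spline inverse inequality on a locally quasi-uniform mesh; both are established in \cite[Chapter 4]{schum07}, and the constants depend only on $k$, $p$, and the ratio $\theta$ (for the inverse inequality). Once those are taken as black boxes, the proof reduces to the polynomial-reproduction trick combined with Bramble--Hilbert, which is essentially routine.
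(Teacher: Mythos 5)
Your proof is correct, but note that the paper itself does not prove this lemma at all: it disposes of it by citation, pointing to \cite{bbsv14} for $p=2$ and to \cite[Theorem 4.41]{schum07} for general $p$. What you have written is essentially a reconstruction of the argument behind those citations: locality of $\Pi_{k,\Xi}$, reproduction of $\mathbb{P}_k$ (which applies to your $q\in\mathbb{P}_{l-1}$ since $l-1\le k$), local $L^p$-stability with a constant depending only on $k$, the Deny--Lions/Bramble--Hilbert bound (particularly harmless in one dimension, where every interval is an affine copy of $(0,1)$ and the constant is scale-invariant), and an inverse estimate for the derivative bounds --- this is exactly how the cited references proceed, so there is no methodological divergence to report. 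One small correction to your bookkeeping of constants: the inverse inequality you invoke on $I_n$ requires no quasi-uniformity whatsoever, because a spline in $S_k(\Xi)$ restricted to a single knot interval $I_n$ is a polynomial of degree $\le k$, so $|s|_{W^{m,p}(I_n)}\le Ch_n^{-m}\|s\|_{L^p(I_n)}$ is just Markov's inequality with $C=C(k,m,p)$. Assumption \ref{local_quasi-uniformity} enters only in the very last step, where you need $\widetilde{h}_n\le C(\theta,k)h_n$ to turn $h_n^{-m}\widetilde{h}_n^{\,l}$ into $C\widetilde{h}_n^{\,l-m}$; you do invoke this comparison, but you attribute the role of $\theta$ to the inverse inequality rather than to the patch-size comparison. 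This misattribution is cosmetic and does not affect the validity of the argument.
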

The proof can be found in \cite[Proposition]{bbsv14} for $p=2$. We refer the reader to \cite[Theorem4.41]{schum07} for general $p$.

\paragraph{Multivariate B-spline basis functions and NURBS basis functions}
Let $\widehat{d}$ be the space dimension with $\widehat{d}\le d$. For $j=1,\dots,\widehat{d}$, given degree $k_j$ and $k_j$-open and locally quasi-uniform knot vector 
\begin{align*}
\Xi_j &= \{\xi_{j,1}, \dots, \xi_{j,r_j}\}\\
&= \{\underbrace{\zeta_{j,0},\dots,\zeta_{j,0}}_{k_j+1\mbox{ times}},\underbrace{\zeta_{j,2},\dots,\zeta_{j,2}}_{m_{j,2}\mbox{ times}},\dots,\underbrace{\zeta_{j,N_j},\dots,\zeta_{j,N_j}}_{k_j+1\mbox{ times}}\},
\end{align*}
we can define the $k_j$-th degree univariate B-spline basis functions
\begin{equation}
\widehat{B}_{i_j,k_j}(\widehat{x}_j),\quad i_j = 1, 2, \dots, r_j-k_j-1.
\end{equation}

Furthermore, the knots without repetition provide the mesh on a parametric domain $\widehat{\Omega} = (0,1)^{\widehat{d}}$, which is denoted as $\widehat{\mathcal{T}}_h$:
\begin{equation}
\widehat{\mathcal{T}}_h =\{I_{1,n_1}\times \dots \times I_{\widehat{d},n_{\widehat{d}}} \mid I_{j,n_j} := (\zeta_{j,n_j-1}, \zeta_{j,n_j}), n_j = 1, \dots, N_j\}.
\end{equation}
Then we define the multivariate B-spline basis functions as
\begin{equation}
\widehat{\mathbf{B}}_{\mathbf{i},\mathbf{k}}(\widehat{\mathbf{x}}) = \widehat{B}_{i_1,k_1}(\widehat{x}_1) \cdots \widehat{B}_{i_{\widehat{d}},k_{\widehat{d}}}(\widehat{x}_{\widehat{d}})
\end{equation}
for $\mathbf{i} = (i_1, \dots, i_{\widehat{d}})$, where $\mathbf{k} = (k_1, \dots, k_{\widehat{d}})$ and $\widehat{\mathbf{x}} = (\widehat{x}_1,\dots,\widehat{x}_{\widehat{d}})\in \widehat{\Omega}$.
We define the \textit{multivariate spline} $S_{\mathbf{k}}(\boldsymbol{\Xi})$ as
\begin{equation}
S_{\mathbf{k}}(\boldsymbol{\Xi}) = S_{k_1}(\Xi_1)\otimes\dots\otimes S_{k_{\widehat{d}}}(\Xi_{\widehat{d}}) = \operatorname{span}\{\widehat{\mathbf{B}}_{\mathbf{i},\mathbf{k}}\mid \mathbf{i}\in\mathbf{I}\},
\end{equation}
where $\boldsymbol{\Xi} = \Xi_1 \times \dots \times \Xi_{\widehat{d}}$ and $\mathbf{I} = \{\mathbf{i}=(i_1,\dots,i_{\widehat{d}})\mid i_j=1,\dots,r_j-k_j-1\}$.
The quasi-interpolation for multivariate B-spline is defined also by the tensor product:
\begin{equation}
\Pi_{\mathbf{k},\boldsymbol{\Xi}} = \Pi_{k_1,\Xi_1}\otimes\dots\otimes\Pi_{k_{\widehat{d}},\Xi_{\widehat{d}}}: L^{\infty}(\widehat{\Omega})\to S_{\mathbf{k}}(\boldsymbol{\Xi}).
\end{equation}


Here, the definition of NURBS basis functions for given weight
\begin{equation}
W(\widehat{\mathbf{x}}) = \displaystyle\sum_{\mathbf{j}\in \mathbf{I}}w_{\mathbf{j}}\widehat{\mathbf{B}}_{\mathbf{j},\mathbf{k}}(\widehat{\mathbf{x}})
\end{equation}
is described as
\begin{equation}
\widehat{\mathbf{N}}_{\mathbf{i},\mathbf{k}}(\widehat{\mathbf{x}}) = \dfrac{w_{\mathbf{i}}\widehat{\mathbf{B}}_{\mathbf{i},\mathbf{k}}(\widehat{\mathbf{x}})}{W(\widehat{\mathbf{x}})}\quad(\textbf{i}\in \textbf{I}),
\end{equation}
where positive constants $w_{\mathbf{j}}>0$ ($\mathbf{j}\in\mathbf{I}$) are called weights.
Furthermore, a NURBS parametrization is given by a linear combination of NURBS basis functions. Letting $P_{\mathbf{i}}\in\mathbb{R}^{d}$ be control points, then a NURBS parametrization $\mathbf{F}(\widehat{\mathbf{x}})$ is given as
\begin{equation}
\mathbf{F}(\widehat{\mathbf{x}}) = \displaystyle\sum_{\mathbf{i}\in\mathbf{I}}P_{\mathbf{i}}\widehat{\mathbf{N}}_{\mathbf{i},\mathbf{k}}(\widehat{\mathbf{x}}).
\end{equation}
This parametrization can define the NURBS geometry in $\mathbb{R}^d$. In this paper, we only consider when $\widehat{d}=d$.

The requirement on the map $\mathbf{F}$ is that it satisfy the following regularity.

\begin{assumption}
\label{Assume:ByLipschitz}
The map $\mathbf{F}: \widehat{\Omega}\to\Omega$ is bijective Lipschitz function whose inverse function is also Lipschitz. Moreover, $\mathbf{F}|_{\overline{Q}}\in C^{\infty}(\overline{Q})$ and $\mathbf{F}^{-1}|_{\overline{K}}\in C^{\infty}(\overline{K})$ for all $Q\in \widehat{\mathcal{T}}_h$, $K\in\mathcal{T}_h$.  
\end{assumption}

\begin{definition}
\label{Def:NURBS_Domain}
Under Assumption \ref{Assume:ByLipschitz}, the domain $\Omega$ defined as $\Omega = \textbf{F}(\widehat{\Omega})$ is called the NURBS domain.  
\end{definition}

\begin{figure}[tb]
\centering
\includegraphics[clip,scale=.9]{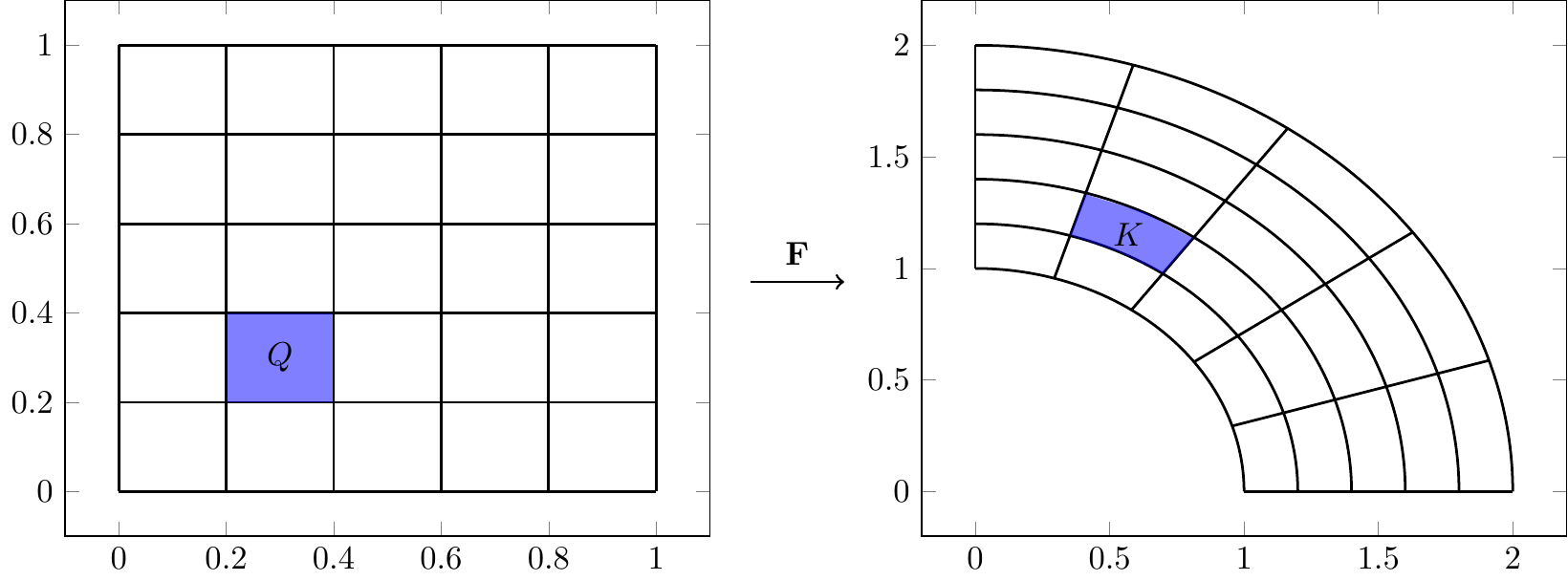}
\caption{Parametric mesh and the mesh of physical domain $\Omega$.}
\end{figure}

A mesh on $\Omega$ is provided as the image of the parametric mesh as
\begin{equation}
\mathcal{T}_h = \{K = \mathbf{F}(Q) \mid Q\in\widehat{\mathcal{T}}_h\}.
\end{equation}

Under Assumption \ref{Assume:ByLipschitz}, we define
\begin{equation}
\label{discrete_space_Rh}
V_h = \mbox{span}\{\mathbf{N}_{\mathbf{i},\mathbf{k}}(\mathbf{x}) = \widehat{\mathbf{N}}_{\mathbf{i},\mathbf{k}}\circ \mathbf{F}^{-1}(\mathbf{x})\mid \mathbf{i}\in\mathbf{I}\},
\end{equation}
where $h$ is the mesh size $h=\max\{h_Q=\mbox{diam}(Q) \mid Q\in \widehat{\mathcal{T}}_h\}$. Furthermore, we define
\begin{equation}
h_K=\|\nabla\mathbf{F}\|_{L^{\infty}(Q)}h_Q\quad(K\in\mathcal{T}_h),
\end{equation}
where $Q=\mathbf{F}^{-1}(K)$. For the NURBS mesh, we define the regularity of the family of mesh $\{\mathcal{T}_h\}_h$ using $\{\widehat{\mathcal{T}}_h\}$.

\begin{assumption}
\label{Assume:Regularity_in_IGA}
The family of the mesh $\{\mathcal{T}_h\}_h$ is regular, meaning that there exists a positive constant $\sigma$ such that
\begin{equation}
\dfrac{h_Q}{h_{Q,\mathrm{min}}}\le \sigma\quad \quad\left(Q\in\bigcup_h\widehat{\mathcal{T}}_h\right),
\end{equation}
where $h_{Q,\mathrm{min}}$ represents the length of the smallest edge of hypercube $Q$.
\end{assumption}

We always assume that Assumptions \ref{local_quasi-uniformity}, \ref{Assume:ByLipschitz} and \ref{Assume:Regularity_in_IGA} are satisfied. Now we review some results obtained in earlier studies.

\begin{lemma}[Trace inequality, Theorem 3.2 of \cite{eh13}]
Letting $K\in \mathcal{T}_h$ and $Q = \mathbf{F}^{-1}(K)$, then
\begin{equation}
\|f\|_{L^2(\partial K)}^2\le C\lambda_Q\lambda_K\left(h_K^{-1}\|f\|_{L^2(K)}^2+h_K|f|_{H^1(K)}^2\right)\quad (f\in H^1(K)),
\end{equation}
where $\lambda_Q$ and $\lambda_K$ respectively represent the local shape regularity constants of $Q$ and $K$. They are independent of $h_K$.
\end{lemma}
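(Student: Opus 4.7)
The plan is to reduce to the classical trace inequality on a fixed reference hypercube by two successive pull‑backs: first from the physical element $K$ back to the parametric element $Q=\mathbf{F}^{-1}(K)$ via the NURBS map $\mathbf{F}$, and then from $Q$ to the unit cube $\widehat{K}_0=(0,1)^d$ via an affine diagonal scaling $\Phi_Q$. On $\widehat{K}_0$ the standard trace theorem
\[
 \|\widehat f\|_{L^2(\partial \widehat K_0)}^2 \le C_0\bigl(\|\widehat f\|_{L^2(\widehat K_0)}^2 + |\widehat f|_{H^1(\widehat K_0)}^2\bigr)
\]
holds for all $\widehat f\in H^1(\widehat K_0)$, and the whole task becomes tracking how the constant changes under these two transformations.

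First I would analyze the scaling $Q\to\widehat K_0$. Writing $Q=\prod_{j=1}^{d}(a_j,a_j+h_{Q,j})$ and $\Phi_Q(\widehat y)=(a_j+h_{Q,j}\widehat y_j)_j$, the change of variables gives the usual anisotropic relations
\[
 \|\tilde f\|_{L^2(Q)}^2 = \Bigl(\textstyle\prod_j h_{Q,j}\Bigr)\|\widehat f\|_{L^2(\widehat K_0)}^2,\qquad
 |\tilde f|_{H^1(Q)}^2 = \Bigl(\textstyle\prod_j h_{Q,j}\Bigr)\sum_j h_{Q,j}^{-2}\|\partial_{\widehat y_j}\widehat f\|_{L^2(\widehat K_0)}^2,
\]
and the analogous formula on each face of $\partial Q$. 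Multiplying the reference trace inequality by $\prod_j h_{Q,j}$ and inserting the anisotropy ratios $h_Q/h_{Q,j}\le\lambda_Q$ (Assumption \ref{Assume:Regularity_in_IGA}) to replace the mixed factors $h_{Q,j}$ by a single $h_Q$, one obtains
\[
 \|\tilde f\|_{L^2(\partial Q)}^2 \le C\lambda_Q\bigl(h_Q^{-1}\|\tilde f\|_{L^2(Q)}^2 + h_Q\,|\tilde f|_{H^1(Q)}^2\bigr),\qquad \tilde f\in H^1(Q).
\]

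Second I would transfer this inequality from $Q$ to $K$ via $\tilde f = f\circ\mathbf{F}|_Q$. By Assumption \ref{Assume:ByLipschitz}, $\mathbf{F}|_{\overline Q}$ and $\mathbf{F}^{-1}|_{\overline K}$ are smooth, so $\det\nabla\mathbf{F}$, $\|\nabla\mathbf{F}\|_{L^\infty(Q)}$ and $\|(\nabla\mathbf{F})^{-1}\|_{L^\infty(Q)}$ are all bounded on $\overline Q$, and the standard change-of-variables identities give
\begin{align*}
 \|\tilde f\|_{L^2(Q)}^2 &\le \|(\det\nabla\mathbf{F})^{-1}\|_{L^\infty(Q)}\,\|f\|_{L^2(K)}^2,\\
 |\tilde f|_{H^1(Q)}^2 &\le \|(\det\nabla\mathbf{F})^{-1}\|_{L^\infty(Q)}\,\|\nabla\mathbf{F}\|_{L^\infty(Q)}^2\,|f|_{H^1(K)}^2,\\
 \|f\|_{L^2(\partial K)}^2 &\le C_{\mathrm{surf}}(\mathbf{F})\,\|\tilde f\|_{L^2(\partial Q)}^2,
\end{align*}
where $C_{\mathrm{surf}}(\mathbf{F})$ involves the surface Jacobian $|\det\nabla\mathbf{F}|\,\|(\nabla\mathbf{F})^{-T}n\|$. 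Collecting these factors into the local shape regularity constant $\lambda_K$ of $K$ and using the defining relation $h_K = \|\nabla\mathbf{F}\|_{L^\infty(Q)}h_Q$ to convert $h_Q$ into $h_K$, combining with the estimate on $Q$ yields precisely
\[
 \|f\|_{L^2(\partial K)}^2 \le C\lambda_Q\lambda_K\bigl(h_K^{-1}\|f\|_{L^2(K)}^2 + h_K\,|f|_{H^1(K)}^2\bigr).
\]

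The main obstacle will be bookkeeping in the second step: one has to verify that the three powers of $\|\nabla\mathbf{F}\|_{L^\infty(Q)}$, $\|(\nabla\mathbf{F})^{-1}\|_{L^\infty(Q)}$ and $|\det\nabla\mathbf{F}|$ that appear when passing between the volume term, the gradient term and the surface term balance exactly so that each side carries the correct power of $h_K$ and only the dimensionless ratio $\lambda_K$ remains. The $h$-independence of $\lambda_Q$ and $\lambda_K$ is guaranteed by Assumption \ref{Assume:Regularity_in_IGA} and the element‑wise smoothness of $\mathbf{F}$, so no hidden $h$-dependence leaks into the final constant.
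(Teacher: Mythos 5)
Your proposal is correct, but note that the paper itself does not prove this lemma at all: it is quoted directly as Theorem 3.2 of the cited work of Evans and Hughes \cite{eh13}, so there is no internal proof to compare against. Your two-step pull-back is the natural self-contained route, and it is sound. In the scaling step $Q\to\widehat K_0$ the factor $\lambda_Q$ arises exactly from Assumption \ref{Assume:Regularity_in_IGA}: the worst face of $\partial Q$ carries the factor $\bigl(\prod_i h_{Q,i}\bigr)/h_{Q,\mathrm{min}}$, and both $h_{Q,\mathrm{min}}^{-1}\le\sigma h_Q^{-1}$ and $h_{Q,\mathrm{min}}^{-1}h_{Q,j}^2\le\sigma h_Q$ are needed, which your anisotropy bookkeeping supplies. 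In the transfer step $Q\to K$ the balance you worried about does close: writing $h_Q=h_K/\|\nabla\mathbf{F}\|_{L^\infty(Q)}$, the term $h_Q^{-1}\|\tilde f\|_{L^2(Q)}^2$ becomes $\|\nabla\mathbf{F}\|_{L^\infty(Q)}h_K^{-1}\|(\det\nabla\mathbf{F})^{-1}\|_{L^\infty(Q)}\|f\|_{L^2(K)}^2$ and the term $h_Q\,|\tilde f|_{H^1(Q)}^2$ becomes $\|\nabla\mathbf{F}\|_{L^\infty(Q)}h_K\,\|(\det\nabla\mathbf{F})^{-1}\|_{L^\infty(Q)}|f|_{H^1(K)}^2$, so each side picks up exactly one power of $\|\nabla\mathbf{F}\|_{L^\infty(Q)}$; combined with the surface-Jacobian bound these form an $h$-independent constant, because $\mathbf{F}$ is a single fixed parametrization (the mesh family is refined in the parametric domain while $\mathbf{F}$ stays the same), smooth on $\overline Q$ and globally bi-Lipschitz by Assumption \ref{Assume:ByLipschitz}. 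The difference with the cited source is sharpness rather than substance: Evans--Hughes obtain explicit, computable constants (their $\lambda_Q,\lambda_K$ are concrete shape-regularity quantities derived via divergence-theorem-type identities), whereas your argument routes through a generic reference-cube trace constant $C_0$ and so only yields a generic $C$; since this paper uses nothing beyond $h$-independence of the constant, your proof suffices, provided you state explicitly that your $\lambda_K$ is \emph{defined} as the collection of geometric factors $\sup_{\partial Q}\bigl(|\det\nabla\mathbf{F}|\,|(\nabla\mathbf{F})^{-T}n|\bigr)\,\|(\det\nabla\mathbf{F})^{-1}\|_{L^\infty(Q)}\|\nabla\mathbf{F}\|_{L^\infty(Q)}$, which need not coincide with the constant of \cite{eh13}.
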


\begin{lemma}[Inverse inequality, Theorem 4.2. of \cite{bbchs06}]
Letting $l$, $m$ be integers with $0\le m\le l$, we have
\begin{equation}
\|v_h\|_{H^{l}(K)}\le C_{\mathrm{shape}}h_K^{m-l}\displaystyle\sum_{i=0}^m\|\nabla\mathbf{F}\|_{L^{\infty}(\mathbf{F}^{-1}(K))}^{i-m}|v_h|_{H^{i}(K)},\quad(K\in \mathcal{T}_h, v_h\in V_h).
\end{equation}
Especially, we have
\begin{equation}
|v_h|_{H^1(K)}\le \|v_h\|_{H^1(K)}\le C_{\mathrm{shape}}h_K^{-1}\|v_h\|_{L^2(K)}\quad(K\in \mathcal{T}_h, v_h\in V_h).
\end{equation}
\end{lemma}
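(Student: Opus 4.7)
The plan is a standard scaling argument adapted to the NURBS setting: pull back from the physical element $K$ to the parametric element $Q=\mathbf{F}^{-1}(K)$ via the NURBS map $\mathbf{F}$, rescale $Q$ to a reference element of unit diameter, exploit the equivalence of all seminorms on the resulting finite-dimensional space, and then undo the two changes of variables, collecting factors of $h_Q$, $\|\nabla\mathbf{F}\|_{L^\infty(Q)}$, and $|\det\nabla\mathbf{F}|$ along the way.

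First I would apply the Fa\`a di Bruno chain rule to $\hat v_h:=v_h\circ\mathbf{F}$ on $Q$. Since Assumption \ref{Assume:ByLipschitz} gives $\mathbf{F}\in C^\infty(\overline Q)$ with all derivatives of $\mathbf{F}$ and $\mathbf{F}^{-1}$ controlled by the local shape regularity constants $\lambda_Q,\lambda_K$, each partial derivative $\partial^\alpha v_h$ with $|\alpha|=l$ is written as a finite linear combination of $(\partial^\beta \hat v_h)\circ\mathbf{F}^{-1}$ with $|\beta|\le l$, multiplied by products of entries of $(\nabla\mathbf{F})^{-1}$ and higher derivatives of $\mathbf{F}^{-1}$. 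Changing variables in the integral and using $|\det\nabla\mathbf{F}|\sim\|\nabla\mathbf{F}\|_{L^\infty(Q)}^d$ up to shape-regularity constants, this step yields an inequality of the form
\begin{equation*}
|v_h|_{H^l(K)}\ \le\ C\,\sum_{i=0}^l\,\|\nabla\mathbf{F}\|_{L^\infty(Q)}^{i-l+d/2}\,|\hat v_h|_{H^i(Q)},
\end{equation*}
and an analogous opposite-direction inequality expressing $|\hat v_h|_{H^i(Q)}$ in terms of $|v_h|_{H^i(K)}$.

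Next I would pass from $Q$ to a fixed reference hypercube $\tilde Q_0$ of unit diameter by the affine rescaling $\hat x=h_Q\tilde x$ (justified by Assumption \ref{Assume:Regularity_in_IGA}, which keeps the aspect ratio bounded by $\sigma$). Under this affine scaling, Sobolev seminorms transform as $|\hat v_h|_{H^i(Q)}\sim h_Q^{d/2-i}|\tilde v_h|_{H^i(\tilde Q_0)}$. On $\tilde Q_0$ the pulled-back and rescaled NURBS functions live in a finite-dimensional space of rational functions of numerator degree bounded by $\mathbf k$ and positive denominator $W$ bounded away from zero uniformly (by local quasi-uniformity, Assumption \ref{local_quasi-uniformity}, and positivity of the weights). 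On such a finite-dimensional space all Sobolev seminorms are equivalent, which gives the pivotal estimate
\begin{equation*}
|\tilde v_h|_{H^l(\tilde Q_0)}\ \le\ C\sum_{i=0}^{m}|\tilde v_h|_{H^i(\tilde Q_0)},\qquad 0\le m\le l.
\end{equation*}

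Finally I would reassemble. Using $h_K=\|\nabla\mathbf{F}\|_{L^\infty(Q)}h_Q$, the scaling $h_Q^{d/2-i}$ converts into $h_K^{m-l}$ combined with the asked-for powers $\|\nabla\mathbf{F}\|_{L^\infty(Q)}^{i-m}$ after composing the three steps, and the $L^2$-to-$L^2$ Jacobian factors cancel. Summing over $l'\le l$ then upgrades the seminorm bound to the full-norm bound stated in the lemma; the special case $l=1$, $m=0$ collapses to $\|v_h\|_{H^1(K)}\le C_{\mathrm{shape}}h_K^{-1}\|v_h\|_{L^2(K)}$ because $\|\nabla\mathbf{F}\|^{0-0}=1$.

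The main obstacle is the bookkeeping in the first step: the chain rule for high-order derivatives of $v_h=\hat v_h\circ\mathbf{F}^{-1}$ produces many terms, and one must verify that the powers of $\|\nabla\mathbf{F}\|$ and $h_Q$ collected from the Jacobian and from each factor of $(\nabla\mathbf{F})^{-1}$ line up exactly to yield the power $i-m$ appearing in the statement, uniformly in $h$. A secondary subtlety is that NURBS basis functions are rational rather than polynomial, so the norm-equivalence step on $\tilde Q_0$ must be justified on a rational (not polynomial) finite-dimensional space; this is where positivity and boundedness of $W$, guaranteed by $w_{\mathbf j}>0$ and Assumption \ref{local_quasi-uniformity}, enter decisively.
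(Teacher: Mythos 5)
First, a point of reference: the paper offers no proof to compare against. This lemma is quoted verbatim from Theorem~4.2 of \cite{bbchs06} as part of a review of known results, and its only role in the paper is to verify Assumption~\ref{Assume:inequalities}~(ii) for NURBS spaces. Your sketch is, in outline, the same scaling argument that underlies the cited reference (pull back by $\mathbf{F}$, rescale to a reference patch, finite-dimensional estimate there, rescale back), so the strategy is the right one; the question is whether your sketch closes.

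It does not quite close, and the weak point is the one you only partially flagged: the norm-equivalence step. After pulling back and rescaling, your functions have the form $\tilde p/\tilde W$, where $\tilde p$ ranges in the \emph{fixed} space of tensor-product polynomials of coordinate degrees $k_j$, but $\tilde W$ is the rescaled restriction of the weight to the particular knot span; the linear space on which you invoke equivalence of norms therefore varies with $K$ and with $h$. Norm equivalence on each member of this family gives a constant depending on that member, and neither positivity of the weights nor local quasi-uniformity (Assumption~\ref{local_quasi-uniformity}) yields, by itself, a bound that is uniform over the family --- that uniformity is exactly what must be proved, since otherwise your ``pivotal estimate'' may degenerate as $h\to 0$. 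The standard repair, and in essence what \cite{bbchs06} does, is to transfer the estimate to the numerator: by the partition-of-unity property of B-splines one has $0<\min_{\mathbf j}w_{\mathbf j}\le W\le\max_{\mathbf j}w_{\mathbf j}$; on a single knot span $W$ is itself a polynomial of the same coordinate degrees; hence polynomial inverse estimates on the fixed polynomial space bound every derivative of $\tilde W$, and therefore of $1/\tilde W$, by constants depending only on $\mathbf k$, $d$ and the weights. One then applies the polynomial inverse inequality to $\tilde p=\tilde W\tilde v_h$ and converts back to $\tilde v_h$ by the Leibniz rule, so the constant is manifestly independent of $K$ and $h$. A second, smaller gap: Fa\`a di Bruno produces higher derivatives of $\mathbf{F}$ and $\mathbf{F}^{-1}$, which are not controlled by $\|\nabla\mathbf{F}\|_{L^\infty}$ and Assumption~\ref{Assume:ByLipschitz} alone; their uniform boundedness in $h$ rests on the IGA convention (in force in \cite{bbchs06}) that $\mathbf{F}$ and $W$ are fixed at the coarsest level and unchanged under refinement, which is what allows them to be absorbed into $C_{\mathrm{shape}}$ --- your argument should invoke this explicitly. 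The exponent bookkeeping you defer is genuine work but routine once these two points are settled.
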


\begin{lemma}[Quasi-interpolation error estimate, Corollary 4.21 of \cite{bbsv14}]
Let the projection $\Pi_{V_h}:L^2(\Omega)\to V_h$ be
\begin{equation}
\Pi_{V_h}f = \dfrac{\Pi_{\mathbf{k},\boldsymbol{\Xi}}(W(f\circ\mathbf{F}))}{W}\circ\mathbf{F}^{-1}\quad (f\in L^2(\Omega)).
\end{equation}
Furthermore, letting $s$ be an integer, $l = \min\{k_1+1,\cdots, k_d+1,s\}$ and $0\le m\le l$, then there exists a positive constant $C$ such that
\begin{equation}
\|v-\Pi_{V_h}v\|_{H^m(K)}\le Ch_{\widetilde{K}}^{l-m}\|v\|_{H^\ell(\widetilde{K})}\quad(K\in\mathcal{T}_h, v\in H^s(\Omega)),
\end{equation}
where $\widetilde{K}= \mathbf{F}(\widetilde{Q})$ for $Q=\mathbf{F}^{-1}(K)$ and 
\begin{equation}
\widetilde{Q} = \bigcup\left\{\operatorname{supp}\widehat{\mathbf{N}}_{\mathbf{i},\mathbf{k}}: \widehat{\mathbf{N}}_{\mathbf{i},\mathbf{k}}|_{Q} \not\equiv 0\right\}.
\end{equation}
\end{lemma}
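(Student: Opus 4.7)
The plan is to transfer the error estimate from the physical element $K$ to the parametric element $Q=\mathbf{F}^{-1}(K)$, where the tensor-product B-spline quasi-interpolation estimate (the preceding lemma) applies, and then push the result back to $K$ using the element-wise smoothness of $\mathbf{F}$ provided by Assumption \ref{Assume:ByLipschitz}.

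Set $\widehat{v}=v\circ\mathbf{F}$ on $\widehat{\Omega}$. The first observation is the identity
\[
(v-\Pi_{V_h}v)\circ\mathbf{F}
=\widehat{v}-\frac{\Pi_{\mathbf{k},\boldsymbol{\Xi}}(W\widehat{v})}{W}
=\frac{W\widehat{v}-\Pi_{\mathbf{k},\boldsymbol{\Xi}}(W\widehat{v})}{W},
\]
which reduces the analysis to the standard multivariate B-spline quasi-interpolation error for the auxiliary function $W\widehat{v}$ on the parametric patch $\widetilde{Q}$. Applying the tensor-product quasi-interpolation estimate stated just before on $Q\subset\widetilde{Q}$ gives, for $0\le m\le l$,
\[
\|W\widehat{v}-\Pi_{\mathbf{k},\boldsymbol{\Xi}}(W\widehat{v})\|_{H^m(Q)}
\le C\,h_{\widetilde{Q}}^{\,l-m}\,\|W\widehat{v}\|_{H^l(\widetilde{Q})}.
\]

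Next I absorb the weight. Because $W=\sum w_{\mathbf{j}}\widehat{\mathbf{B}}_{\mathbf{j},\mathbf{k}}$ is a fixed piecewise-smooth function that is strictly positive on $\widehat{\Omega}$, both multiplication by $W$ and division by $W$ are bounded on $H^l(\widetilde{Q})$ and $H^m(Q)$ respectively by the Leibniz rule applied element-wise, with constants depending only on the weights $w_{\mathbf{j}}$ and on the polynomial degree $\mathbf{k}$. Hence
\[
\|(v-\Pi_{V_h}v)\circ\mathbf{F}\|_{H^m(Q)}
\le C\,h_{\widetilde{Q}}^{\,l-m}\,\|\widehat{v}\|_{H^l(\widetilde{Q})}.
\]
Finally I return to the physical element. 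Under Assumption \ref{Assume:ByLipschitz}, $\mathbf{F}|_{\overline{Q}}\in C^{\infty}(\overline{Q})$ and $\mathbf{F}^{-1}|_{\overline{K}}\in C^{\infty}(\overline{K})$, and together with the shape-regularity of Assumption \ref{Assume:Regularity_in_IGA}, Faà di Bruno's formula yields the two-sided change-of-variable bounds
\[
\|w\circ\mathbf{F}^{-1}\|_{H^m(K)}\le C\,\|\nabla\mathbf{F}\|_{L^\infty(Q)}^{\,d/2-m}\,\|w\|_{H^m(Q)},\qquad
\|\widehat{v}\|_{H^l(\widetilde{Q})}\le C\,\|\nabla\mathbf{F}\|_{L^\infty(\widetilde{Q})}^{\,l-d/2}\,\|v\|_{H^l(\widetilde{K})},
\]
with constants independent of $h$. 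Combining these with the relation $h_K=\|\nabla\mathbf{F}\|_{L^\infty(Q)}\,h_Q$ (and analogously for $\widetilde{K}$, $\widetilde{Q}$) cancels the $\|\nabla\mathbf{F}\|$ factors and converts $h_{\widetilde{Q}}^{\,l-m}$ into $h_{\widetilde{K}}^{\,l-m}$, producing the claimed bound.

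The main obstacle is the careful bookkeeping of the Faà di Bruno expansion in the last step: since $\mathbf{F}$ is not affine, the $m$-th derivative of $w\circ\mathbf{F}^{-1}$ is a sum of products of derivatives of $w$ of orders up to $m$ with derivatives of $\mathbf{F}^{-1}$, and one must verify that all such cross-terms remain controlled by $h_K^{m-i}\|\nabla\mathbf{F}\|^{\text{(correct power)}}$ uniformly over the mesh. This is where Assumption \ref{Assume:Regularity_in_IGA} together with the element-wise smoothness of $\mathbf{F}$ enters in an essential way, and it is precisely this bookkeeping that is carried out in detail in the cited \cite{bbsv14}.
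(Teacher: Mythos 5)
The paper offers no proof of this lemma at all: it is quoted as Corollary 4.21 of \cite{bbsv14}, so there is no internal argument to compare against. Your outline is, in spirit, the argument of that reference: pull back to the parametric domain, use the exact identity $(v-\Pi_{V_h}v)\circ\mathbf{F}=W^{-1}\bigl(W\widehat{v}-\Pi_{\mathbf{k},\boldsymbol{\Xi}}(W\widehat{v})\bigr)$ with $\widehat{v}=v\circ\mathbf{F}$, apply the spline quasi-interpolation estimate to $W\widehat{v}$, absorb the weight, and change variables back. That skeleton is correct.

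Three places where your sketch is thinner than what is actually required. First, the estimate ``stated just before'' in the paper is univariate; the multivariate bound for $\Pi_{\mathbf{k},\boldsymbol{\Xi}}$ must be derived by a tensor-product argument (this is where Assumption \ref{local_quasi-uniformity} enters), and you invoke it as if it were already available. Second, your two-sided change-of-variable inequalities with the exact powers $\|\nabla\mathbf{F}\|^{d/2-m}$ and $\|\nabla\mathbf{F}\|^{l-d/2}$ are affine-scaling heuristics: for a non-affine $\mathbf{F}$, Fa\`a di Bruno produces lower-order cross terms and no such clean powers hold, so the claimed cancellation is not literally available. Fortunately it is also unnecessary: since $\mathbf{F}$ is a fixed, $h$-independent, element-wise smooth bi-Lipschitz map (Assumption \ref{Assume:ByLipschitz}), element-wise norm equivalence $\|w\circ\mathbf{F}^{-1}\|_{H^m(K)}\le C\|w\|_{H^m(Q)}$ with $C$ independent of $h$, combined with $h_{\widetilde{Q}}\le C h_{\widetilde{K}}$ from the Lipschitz continuity of $\mathbf{F}^{-1}$, already converts the parametric estimate into the physical one. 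Third, and this is the only genuinely delicate point: $W$ and $\mathbf{F}$ are smooth only element-wise, with finite smoothness across knot lines. The support extension $\widetilde{Q}$ crosses knot lines, so $W\widehat{v}$ need not belong to $H^l(\widetilde{Q})$, and your ``Leibniz rule applied element-wise'' does not by itself yield $\|W\widehat{v}\|_{H^l(\widetilde{Q})}\le C\|\widehat{v}\|_{H^l(\widetilde{Q})}$ as a bound between standard Sobolev norms. This is precisely why \cite{bbsv14} works in bent Sobolev spaces (element-wise regularity with interface conditions); without that device or a substitute, the middle step of your argument is not justified as written.
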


In summary, Assumptions \ref{Assume:EqualityatExternalElement} -- \ref{Assume:inequalities} are satisfied under Assumptions \ref{local_quasi-uniformity} -- \ref{Assume:Regularity_in_IGA}.

\section{Proof of Theorem \ref{Thm:Continuity_of_b} and \ref{Thm:Coercivity_of_b}}
\label{sec:4}

This section is devoted to the proof of theorems for the ``elliptic part'' $a_{\ep,h}$. 
We start with the following auxiliary lemma: 
the estimate \eqref{Eq:Normal_Derivative_Bounded_by_H1} itself is well
known (see for instance \cite[Lemma 2.1]{tho06}); the estimate
\eqref{Eq:Normal_Derivative_Bounded_by_H2} is apparently unfamiliar.

\begin{lemma}
\label{Lem:Estimate_for_Normal_Derivative}
There exists a positive constant $C$ such that
\begin{equation}
\label{Eq:Normal_Derivative_Bounded_by_H2}
\displaystyle\sum_{E\in\mathcal{E}_h^e}h_E\|n\cdot\mu\nabla v\|_{L^2(E)}^2 \le C\left(|v|_{H^1(\Omega)}^2 + \sum_{K\in\mathcal{T}_h}h_K^2|v|_{H^2(K)}^2\right)\quad(v\in V).
\end{equation}
Particularly, there exists a positive constant $C_I$ such that
\begin{equation}
\label{Eq:Normal_Derivative_Bounded_by_H1}
\displaystyle\sum_{E\in\mathcal{E}_h^e}h_E\|n\cdot\mu\nabla v_h\|_{L^2(E)}^2 \le C_I|v_h|_{H^1(\Omega)}^2\quad(v_h\in V_h).
\end{equation}
\end{lemma}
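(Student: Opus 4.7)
The plan is to reduce both estimates to applying the trace inequality of Assumption~\ref{Assume:inequalities}(i) componentwise to $\nabla v$. First I would observe that, since $\mu$ is symmetric with eigenvalues bounded by $\mu_1$ (from \eqref{eq:L4}) and $|n|=1$, we have the pointwise bound $|n\cdot\mu\nabla v|\le\mu_1|\nabla v|$ on every $E\in\mathcal{E}_h^e$. Hence it suffices to control $\sum_{E\in\mathcal{E}_h^e}h_E\|\nabla v\|_{L^2(E)}^2$ by the right-hand sides of the two stated inequalities.

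For \eqref{Eq:Normal_Derivative_Bounded_by_H2}, the hypothesis $v\in V$ guarantees $v|_{K_E}\in H^2(K_E)$, so each partial derivative $\partial_i v$ belongs to $H^1(K_E)$ and is an admissible argument for Assumption~\ref{Assume:inequalities}(i). Applying that trace inequality to $\partial_i v$ and summing over $i=1,\dots,d$ yields
\[
\|\nabla v\|_{L^2(E)}^2\le C_{\mathrm{Tr}}\,h_E^{-1}\bigl(|v|_{H^1(K_E)}^2+h_{K_E}^2|v|_{H^2(K_E)}^2\bigr).
\]
Multiplying by $h_E$, using the pointwise bound above, and summing over $E\in\mathcal{E}_h^e$ would then produce \eqref{Eq:Normal_Derivative_Bounded_by_H2}, since each boundary element $K_E$ appears as the neighbor of only a uniformly bounded number of boundary edges (a consequence of shape regularity).

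For \eqref{Eq:Normal_Derivative_Bounded_by_H1}, the additional ingredient is an inverse-type inequality $|v_h|_{H^2(K)}\le C\,h_K^{-1}|v_h|_{H^1(K)}$, which I would use to absorb the $h_{K_E}^2|v_h|_{H^2(K_E)}^2$ term above into $|v_h|_{H^1(K_E)}^2$; combined with Assumption~\ref{Assume:EqualityatExternalElement} ($h_{K_E}\le Ch_E$), this immediately gives \eqref{Eq:Normal_Derivative_Bounded_by_H1}. The main technical wrinkle, and essentially the only nontrivial point in the argument, is justifying this $H^2$-to-$H^1$ inverse inequality: Assumption~\ref{Assume:inequalities}(ii) as stated addresses only $V_h$ itself. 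For the two concrete examples in Section~\ref{sec:3} the bound is standard—for classical $\mathbb{P}^k$ FEM it follows by applying Assumption~\ref{Assume:inequalities}(ii) to the partial derivatives of $v_h$, which still lie in a polynomial space, and for IGA it is covered by the general scaled inverse inequality cited from \cite{bbchs06}. Everything else is a straightforward application of the trace inequality, pointwise boundedness of $\mu$, and summation over boundary edges.
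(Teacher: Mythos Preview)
Your argument is correct and follows essentially the same route as the paper: the pointwise bound $|n\cdot\mu\nabla v|\le\mu_1|\nabla v|$ from symmetry of $\mu$, followed by the componentwise trace inequality applied to $\partial_i v$, is exactly what the paper does for \eqref{Eq:Normal_Derivative_Bounded_by_H2}. For \eqref{Eq:Normal_Derivative_Bounded_by_H1} the paper simply invokes the pre-stated inequality \eqref{Eq:Trace_Constant2} rather than re-deriving it via an $H^2$-to-$H^1$ inverse estimate; your discussion of that inverse estimate is in effect unpacking what goes into \eqref{Eq:Trace_Constant2}, and your remark that its justification ultimately rests on the concrete examples in Section~\ref{sec:3} is well taken.
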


\begin{proof}
Because $\mu\in\mathbb{R}^{d\times d}$ is symmetric, we have
\begin{equation}
\sup_{\xi\in\mathbb{R}^d}\dfrac{|\mu\xi|}{|\xi|} = \sup_{\xi\in\mathbb{R}^d}\dfrac{\mu\xi\cdot\xi}{|\xi|^2}\le \mu_1.
\end{equation}
This result implies $|n\cdot\mu\nabla v|\le \mu_1|\nabla v|$. Therefore
\begin{equation}
\|n\cdot\mu\nabla v\|_{L^2(E)}^2 \le \mu_1^2\displaystyle\sum_{i=1}^d\left\|\operatorname{Tr}\left(\dfrac{\partial v}{\partial x_i}\right)\right\|_{L^2(E)}^2
\end{equation}
for all $E\in \mathcal{E}_h^e$. Here Assumption \ref{Assume:inequalities} (i) yields
\begin{equation}
\mu_1^2\displaystyle\sum_{i=1}^d\left\|\operatorname{Tr}\left(\dfrac{\partial v}{\partial x_i}\right)\right\|_{L^2(E)}^2 \le C_{\mathrm{Tr}}\mu_1^2h_E^{-1}(|v|_{H^1(K_E)}^2 + h_{K_E}^2|v|_{H^2(K_E)}^2)\quad(v\in V,~ E\in \mathcal{E}_h^e),
\end{equation}
which implies \eqref{Eq:Normal_Derivative_Bounded_by_H2} with $C=C_{\mathrm{Tr}}\mu_1^2$. Furthermore, using equation \eqref{Eq:Trace_Constant2} leads to the following:
\begin{equation*}
\|n\cdot\mu\nabla v_h\|_{L^2(E)}^2 \le \mu_1^2\displaystyle\sum_{i=1}^d\left\|\operatorname{Tr}\left(\dfrac{\partial v_h}{\partial x_i}\right)\right\|_{L^2(E)}^2\le C^*\mu_1^2h_E^{-1}|v_h|_{H^1(K_E)}^2\quad(v_h\in V_h,~ E\in\mathcal{E}_h^e).
\end{equation*}
Then we have the estimate \eqref{Eq:Normal_Derivative_Bounded_by_H1} with $C_I = C^*\mu_1^2$.
\end{proof}

Next we can state the following proofs.

\begin{proof}[Proof of Theorem \ref{Thm:Continuity_of_b}]
First, the Cauchy--Schwarz inequality gives
\begin{align*}
a_{\ep,h}(t;w,v_h) &\le M\|w\|_{H^1(\Omega)}\|v_h\|_{H^1(\Omega)}+\left(\displaystyle\sum_{E\in\mathcal{E}_h^e}h_E\|n\cdot\mu\nabla w\|_{L^2(E)}^2\right)^{1/2}\left(\sum_{E\in\mathcal{E}_h^e}h_E^{-1}\|v_h\|_{L^2(E)}^2\right)^{1/2}\\
&\quad +\left(\displaystyle\sum_{E\in\mathcal{E}_h^e}h_E\|n\cdot\mu\nabla v_h\|_{L^2(E)}^2\right)^{1/2}\left(\sum_{E\in\mathcal{E}_h^e}h_E^{-1}\|w\|_{L^2(E)}^2\right)^{1/2} + C\|w\|_{L^2(\Gamma)}\|v_h\|_{L^2(\Gamma)}\\
&\qquad + \ep\left(\displaystyle\sum_{E\in\mathcal{E}_h^e}h_E^{-1}\|w\|_{L^2(E)}^2\right)^{1/2}\left(\displaystyle\sum_{E\in\mathcal{E}_h^e}h_E^{-1}\|v_h\|_{L^2(E)}^2\right)^{1/2}
\end{align*}
for all $w\in V$, $v_h\in V_h$ and $t\in J$. By applying the (standard) trace inequality and Lemma \ref{Lem:Estimate_for_Normal_Derivative}, we have
\begin{equation}
a_{\ep,h}(t;w,v_h) \le C\|w\|_V\|v_h\|_{V_h}\quad(w\in V,~ v_h\in V_h,~ t\in J).
\end{equation}
Moreover, the definition of norm $\|\cdot\|_V$ and $\|\cdot\|_{V_h}$ yields that there exists a positive constant $\widehat{M}$ such that
\begin{equation}
a_{\ep,h}(t;w_h,v_h)\le C\|w_h\|_V\|v_h\|_{V_h}\le \widehat{M}\|w_h\|_{V_h}\|v_h\|_{V_h}\quad(w_h,v_h\in V_h,~ t\in J),
\end{equation}
which is the desired inequality.
\end{proof}

\begin{proof}[Proof of Theorem \ref{Thm:Coercivity_of_b}]
Fix $v_h\in V_h$ and $t\in J$ arbitrarily. First, we mention that $V_h\not\subset H^1_0(\Omega)$. Therefore
\begin{equation}
a(t;v_h,v_h) \ge \alpha\|v_h\|_{H^1(\Omega)}^2 + \dfrac{1}{2}(b\cdot nv_h,v_h)_{L^2(\Gamma_{\mathrm{in}})},
\end{equation}
and $(b\cdot nv_h,v_h)_{\Gamma_{\mathrm{in}}} < 0$. Now we have the following:
\begin{align*}
a_{\ep,h}(t;v_h,v_h) &\ge \alpha\|v_h\|_{H^1(\Omega)}^2 + \dfrac{1}{2}(b\cdot nv_h,v_h)_{L^2(\Gamma_{\mathrm{in}})} - 2\displaystyle\sum_{E\in\mathcal{E}_h^e}(n\cdot\mu\nabla v_h,v_h)_{L^2(E)}\\
&\quad - (b\cdot nv_h,v_h)_{L^2(\Gamma_{\mathrm{in}})} + \ep\displaystyle\sum_{E\in\mathcal{E}_h^e}h_E^{-1}\|v_h\|_{L^2(E)}^2\\
&\ge \alpha\|v_h\|_{H^1(\Omega)}^2 - \dfrac{1}{2}(b\cdot nv_h,v_h)_{L^2(\Gamma_{\mathrm{in}})} + \ep\displaystyle\sum_{E\in\mathcal{E}_h^e}h_E^{-1}\|v_h\|_{L^2(E)}^2\\
&\quad - 2\left(\sum_{E\in\mathcal{E}_h^e}h_E\|n\cdot\mu\nabla v_h\|_{L^2(E)}^2\right)^{1/2}\left(\sum_{E\in\mathcal{E}_h^e}h_E^{-1}\|v_h\|_{L^2(E)}^2\right)^{1/2}.
\end{align*}
Recalling $-(b\cdot nv_h,v_h)_{L^2(\Gamma_{\mathrm{in}})}>0$ and Lemma \ref{Lem:Estimate_for_Normal_Derivative}, then we have
\begin{equation*}
a_{\ep,h}(t;v_h,v_h)\ge \alpha\|v_h\|_{H^1(\Omega)}^2 + \ep\sum_{E\in\mathcal{E}_h^e}h_E^{-1}\|v_h\|_{L^2(E)}^2- 2C_I^{1/2}|v_h|_{H^1(\Omega)}\left(\sum_{E\in\mathcal{E}_h^e}h_E^{-1}\|v_h\|_{L^2(E)}^2\right)^{1/2}.
\end{equation*}
Here we apply Young's inequality to obtain
\begin{equation}
2C_I^{1/2}|v_h|_{H^1(\Omega)}\left(\displaystyle\sum_{E\in\mathcal{E}_h^e}h_E^{-1}\|v_h\|_{L^2(E)}^2\right)^{1/2} \le \dfrac{\alpha}{2}|v_h|_{H^1(\Omega)}^2 + 2\alpha C_I\sum_{E\in\mathcal{E}_h^e}h_E^{-1}\|v_h\|_{L^2(E)}^2.
\end{equation}
Then we have
\begin{equation}
a_{\ep,h}(t;v_h,v_h) \ge \dfrac{\alpha}{2}\|v_h\|_{H^1(\Omega)}^2 + (\ep-2\alpha C_I)\sum_{E\in\mathcal{E}_h^e}h_E^{-1}\|v_h\|_{L^2(E)}^2.
\end{equation}
Letting $\widehat{\alpha} = \min\{\dfrac{\alpha}{2},\ep-2\alpha C_I\}$ yields the desired conclusion.
\end{proof}

\section{Proof of Theorem \ref{Thm:Continuity_of_B} and \ref{Thm:inf-sup_condition}}
\label{sec:5}
This section presents that $B_{\ep,h}$ is continuous and satisfies the inf-sup condition. First, we show Theorem \ref{Thm:Continuity_of_B}.

\begin{proof}[Proof of Theorem \ref{Thm:Continuity_of_B}]
Recalling that every $z\in X_V$ and $z_h\in X_h\subset C^0(\overline{J};V_h)$ satisfy $z(t)\in V$, $z_h(t)\in V_h$ for any $t\in J$, we can apply Theorem \ref{Thm:Continuity_of_b} to obtain
\begin{align*}
B_{\ep,h}(z,\mathbf{y}_h) &= \int_J((\partial_tz,y_h)_{L^2(\Omega)} + a_{\ep,h}(t;z,y_h))~dt + (z(0),\widetilde{y}_h)_{L^2(\Omega)}\\
&\le \int_J(\|\partial_tz\|_{V_h'}\|y_h\|_{V_h}+C\|z\|_{V}\|y_h\|_{V_h})~dt + \|z(0)\|_{L^2(\Omega)}\|\widetilde{y}_h\|_{L^2(\Omega)}\\
&\le C\|z\|_{X_V}\|\mathbf{y}_h\|_{Y_h}\quad (z\in X_V,\ \mathbf{y}_h = (y_h,\widetilde{y}_h)\in Y_h),
\end{align*}
which is the first desired estimate. Moreover, $\|z_h\|_{X_V} \le C\|z_h\|_{X_h}$ leads to the second inequality. 
\end{proof}

Stating the proof of Theorem \ref{Thm:inf-sup_condition} requires an auxiliary operator $\mathcal{A}_{\ep,h}(t):V_h\to V_h$ defined by setting
\begin{equation}
(\mathcal{A}_{\ep,h}(t)w_h,v_h)_{L^2(\Omega)} = a_{\ep,h}(t;w_h,v_h) \quad(w_h,v_h\in V_h,\ t\in J).
\end{equation}
We recall that the bilinear form $a_{\ep,h}(t;\cdot,\cdot):V_h\times V_h\to\mathbb{R}$ is continuous and coercive. Therefore, the Lax--Milgram theorem shows that the operator $\mathcal{A}_{\ep,h}(t)$ is invertible for $t\in J$. Now we have the following lemma.

\begin{lemma}
\label{Lem:Estimates_on_A_ep_h_inv}
Operator $\mathcal{A}_{\ep,h}(t)^{-1}:V_h\to V_h$ satisfies
\begin{align}
\|\mathcal{A}_{\ep,h}(t)^{-1}v_h\|_{V_h}&\le \widehat{\alpha}^{-1}\|v_h\|_{V_h'},\\
(v_h,\mathcal{A}_{\ep,h}(t)^{-1}v_h)_{L^2(\Omega)}&\ge \widehat{\alpha}\widehat{M}^{-2}\|v_h\|_{V_h'}^2
\end{align}
for $v_h\in V_h$ and $t\in J$.
\end{lemma}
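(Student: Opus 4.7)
The plan is to exploit coercivity and continuity of $a_{\ep,h}(t;\cdot,\cdot)$ on $V_h\times V_h$ (Theorems \ref{Thm:Continuity_of_b} and \ref{Thm:Coercivity_of_b}) together with the definition of $\mathcal{A}_{\ep,h}(t)$. Throughout, fix $t\in J$ and $v_h\in V_h$, and set $w_h:=\mathcal{A}_{\ep,h}(t)^{-1}v_h\in V_h$, so that $a_{\ep,h}(t;w_h,\phi_h)=(v_h,\phi_h)_{L^2(\Omega)}$ for all $\phi_h\in V_h$.

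For the first inequality, I would test with $\phi_h=w_h$ and invoke coercivity:
\[
\widehat{\alpha}\|w_h\|_{V_h}^2 \le a_{\ep,h}(t;w_h,w_h) = (v_h,w_h)_{L^2(\Omega)} \le \|v_h\|_{V_h'}\|w_h\|_{V_h},
\]
where the last step uses the very definition of $\|\cdot\|_{V_h'}$. Dividing by $\|w_h\|_{V_h}$ (the trivial case $w_h=0$ being immediate) yields $\|w_h\|_{V_h}\le \widehat{\alpha}^{-1}\|v_h\|_{V_h'}$.

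For the second inequality, the key observation is that $w_h$ also controls $\|v_h\|_{V_h'}$ from above via continuity. Indeed, for every $\phi_h\in V_h\setminus\{0\}$,
\[
\frac{(v_h,\phi_h)_{L^2(\Omega)}}{\|\phi_h\|_{V_h}} = \frac{a_{\ep,h}(t;w_h,\phi_h)}{\|\phi_h\|_{V_h}} \le \widehat{M}\|w_h\|_{V_h},
\]
so taking the supremum gives $\|v_h\|_{V_h'}\le \widehat{M}\|w_h\|_{V_h}$. Combining this with the coercivity computation $(v_h,w_h)_{L^2(\Omega)} = a_{\ep,h}(t;w_h,w_h)\ge \widehat{\alpha}\|w_h\|_{V_h}^2$ yields
\[
(v_h,w_h)_{L^2(\Omega)} \ge \widehat{\alpha}\|w_h\|_{V_h}^2 \ge \widehat{\alpha}\widehat{M}^{-2}\|v_h\|_{V_h'}^2,
\]
which is the second claim. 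There is no substantive obstacle here; the only thing to be a bit careful about is that coercivity and continuity are available on all of $V_h\times V_h$ uniformly in $t\in J$, which is precisely what Theorems \ref{Thm:Continuity_of_b} and \ref{Thm:Coercivity_of_b} guarantee, so both constants $\widehat{\alpha}$ and $\widehat{M}$ are independent of $t$ (and of $h$ and $\ep$ subject to Assumption \ref{Assume:ep}).
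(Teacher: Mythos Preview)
Your proof is correct and follows essentially the same approach as the paper: both arguments set $w_h=\mathcal{A}_{\ep,h}(t)^{-1}v_h$, use coercivity tested against $w_h$ for the first bound, and combine the continuity estimate $\|v_h\|_{V_h'}\le\widehat{M}\|w_h\|_{V_h}$ with coercivity for the second. One minor remark: your parenthetical that $\widehat{\alpha}$ is independent of $\ep$ is not quite right (the paper's proof of Theorem~\ref{Thm:Coercivity_of_b} gives $\widehat{\alpha}=\min\{\alpha/2,\ep-2\alpha C_I\}$), but this does not affect the argument here.
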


\begin{proof}
First, we have
\begin{align*}
\widehat{\alpha}\|\mathcal{A}_{\ep,h}(t)^{-1}v_h\|^2_{V_h}&\le (\mathcal{A}_{\ep,h}(t)(\mathcal{A}_{\ep,h}(t)^{-1}v_h),\mathcal{A}_{\ep,h}(t)^{-1}v_h)_{L^2(\Omega)}\\
&= (v_h,\mathcal{A}_{\ep,h}(t)^{-1}v_h)_{L^2(\Omega)}
\le \|v_h\|_{V_h'}\|\mathcal{A}_{\ep,h}(t)^{-1}v_h\|_{V_h}
\end{align*}
for all $v_h\in V_h$. Therefore we have
\begin{equation}
\|\mathcal{A}_{\ep,h}(t)^{-1}v_h\|_{V_h}\le \widehat{\alpha}^{-1}\|v_h\|_{V_h'}\quad (v_h\in V_h,\ t\in J).
\end{equation}

Next, it is noteworthy that the constant $\widehat{M}$ satisfies
\begin{equation*}
\sup_{0\neq w_h\in V_h}\dfrac{\|\mathcal{A}_{\ep,h}(t)w_h\|_{V_h'}}{\|w_h\|_{V_h}} = \sup_{0\neq w_h\in V_h}\sup_{0\neq v_h\in V_h}\dfrac{(\mathcal{A}_{\ep,h}(t)w_h,v_h)_{L^2(\Omega)}}{\|w_h\|_{V_h}\|v_h\|_{V_h}}\le \widehat{M}.
\end{equation*}
Therefore,
\begin{equation}
\|v_h\|_{V_h'} = \|\mathcal{A}_{\ep,h}(t)(\mathcal{A}_{\ep,h}(t)^{-1}v_h)\|_{V_h'}\le\widehat{M}\|\mathcal{A}_{\ep,h}(t)^{-1}v_h\|_{V_h}
\end{equation}
for all $v_h\in V_h$. This yields
\begin{align*}
(v_h,\mathcal{A}_{\ep,h}(t)^{-1}v_h)_{L^2(\Omega)}&=(\mathcal{A}_{\ep,h}(t)(\mathcal{A}_{\ep,h}(t)^{-1}v_h),\mathcal{A}_{\ep,h}(t)^{-1}v_h)_{L^2(\Omega)}\\
&\ge \widehat{\alpha}\|\mathcal{A}_{\ep,h}(t)^{-1}v_h\|^2_{V_h}\\
&\ge \widehat{\alpha}\widehat{M}^{-2}\|v_h\|_{V_h'}^2\quad (v_h\in V_h,\ t\in J).
\end{align*}
\end{proof}

We can state the following proof.

\begin{proof}[Proof of Theorem \ref{Thm:inf-sup_condition}]
After fixing $z_h\in X_h$ arbitrarily, let $\mathbf{y}_h:= (\mathcal{A}_{\ep,h}(t)^{-1}(\partial_tz_h)+\delta z_h, \delta z_h(0))\in Y_h$, where $\delta := \widehat{\alpha}^{-4}\widehat{M}^4$. Then we have
\begin{align*}
\|\mathbf{y}_h\|_{Y_h}^2 &= \displaystyle\int_J\|\mathcal{A}_{\varepsilon}(t)^{-1}(\partial_tz_h)+\delta z_h\|_{V_h}^2~dt + \|\delta z_h(0)\|_{L^2(\Omega)}^2\\
&\le \max\{\widehat{\alpha}^{-1},\delta\}\displaystyle\int_J(\partial_tz_h\|_{V_h'}^2 + \|z_h\|_{V_h}^2)~dt +\delta\|z_h(0)\|_{L^2(\Omega)}^2 \le \max\{\widehat{\alpha}^{-1},\delta\}\|z_h\|_{X_h}^2,
\end{align*}
and
\begin{align*}
B_{\ep,h}(z_h,\mathbf{y}_h) &= \displaystyle\int_J\left((\partial_tz_h,\mathcal{A}_{\ep,h}(t)^{-1}(\partial_tz_h)+\delta z_h)_{L^2(\Omega)} +a_{\ep,h}(t;z_h,\mathcal{A}_{\ep,h}(t)^{-1}(\partial_tz_h)+\delta z_h)\right)~dt\\
&\qquad + (z_h(0),\delta z_h(0))_{L^2(\Omega)}\\
&\ge\displaystyle\int_J(\widehat{\alpha}\widehat{M}^{-2}\|\partial_tz_h\|_{V_h'}^2 - \widehat{\alpha}^{-1}\widehat{M}\|z_h\|_{V_h}\|\partial_tz_h\|_{V_h'} + \delta\widehat{\alpha}\|z_h\|_{V_h}^2)~dt\\
&\qquad + \delta/2(\|z_h(T)\|_{L^2(\Omega)}^2 - \|z_h(0)\|_{L^2(\Omega)}^2)+ \delta\|z_h(0)\|_{L^2(\Omega)}^2\\
&\ge \displaystyle\int_J (\widehat{\alpha}\widehat{M}^{-2}\|\partial_tz_h\|_{V_h'}^2 - \widehat{\alpha}^{-1}\widehat{M}\|z_h\|_{V_h}\|\partial_tz_h\|_{V_h'} + \delta\widehat{\alpha}\|z_h\|_{V_h}^2)~dt\\
&\qquad + \delta/2\|z_h(0)\|_{L^2(\Omega)}^2\\
&\ge\displaystyle\int_J \left(\widehat{\alpha}\widehat{M}^{-2}/2 \|\partial_tz_h\|_{V_h'}^2 + (\delta\widehat{\alpha} - \widehat{\alpha}^{-3}\widehat{M}^4/2)\|z_h\|_{V_h}^2\right)~dt + \delta/2\|z_h(0)\|_{L^2(\Omega)}^2\\
&\ge 1/2\min\{\widehat{\alpha}\widehat{M}^{-2},\widehat{\alpha}^{-3}\widehat{M}^4,\widehat{\alpha}^{-4}\widehat{M}^4\}\left(\displaystyle\int_J(\|\partial_tz_h\|_{V_h'}^2 + \|z_h\|_{V_h}^2)~dt + \|z_h(0)\|_{L^2(\Omega)}^2\right)\\
& = 1/2\min\{\widehat{\alpha}\widehat{M}^{-2},\widehat{\alpha}^{-3}\widehat{M}^4,\widehat{\alpha}^{-4}\widehat{M}^4\}\|z_h\|_{X_h}^2.
\end{align*}
The two inequalities above imply that
\begin{equation}
a_{\ep,h}(z_h,\mathbf{y}_h) \ge \beta\|z_h\|_{X_h}\|\mathbf{y}_h\|_{Y_h},
\end{equation}
where $\beta = 1/2\min\{\widehat{\alpha}\widehat{M}^{-2},\widehat{\alpha}^{-3}\widehat{M}^4,\widehat{\alpha}^{-4}\widehat{M}^4\}\min\{\widehat{\alpha}^{1/2},\widehat{\alpha}^2\widehat{M}^{-2}\}$. Therefore we have
\begin{equation}
\displaystyle\inf_{0\neq z_h\in X_{h}}\sup_{0\neq \mathbf{y}_h\in Y_h}\dfrac{a_{\ep,h}(z_h,\mathbf{y}_h)}{\|z_h\|_{X_h}\|\mathbf{y}_h\|_{Y_h}} \ge \beta
\end{equation}
The inf-sup condition follows.

\end{proof}

\section{Proof of Theorem \ref{Thm:ErrorEstimateinSpatialSemiDiscretization}}
\label{sec:6}
We need the following lemma, which is a direct consequence of Assumptions \ref{Assume:EqualityatExternalElement} and \ref{Assume:inequalities} (see \cite[Lemma 2.3]{tho06} for example).

\begin{lemma}
Letting $l$ be an integer with $2\le l\le k+1$, then there exist two positive constants $C_1$ and $C_2$ such that
\begin{subequations}
\begin{gather}
\|v-\Pi_h v\|_{V_h}\le C_1h^{l-1}\|v\|_{H^l(\Omega)}\label{Eq:VhError},\\
\|v-\Pi_hv\|_{V}\le C_2h^{l-1}\|v\|_{H^l(\Omega)}\label{Eq:VError}
\end{gather}
\end{subequations}
for all $v\in H^l(\Omega)$.
\end{lemma}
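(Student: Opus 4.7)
The plan is to unpack the two norms according to their definitions in Section \ref{sec:2a} and to bound each piece by combining the trace inequality (Assumption \ref{Assume:inequalities}(i)), the geometric compatibility \eqref{eq:ke}, and the three interpolation estimates in Assumption \ref{Assume:inequalities}(iii) with $j=0,1,2$. All constants will be tracked only up to mesh-independent factors.

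For \eqref{Eq:VhError}, I would write
\[
\|v-\Pi_hv\|_{V_h}^2 = \|v-\Pi_hv\|_{H^1(\Omega)}^2 + \sum_{E\in\mathcal{E}_h^e}h_E^{-1}\|v-\Pi_hv\|_{L^2(E)}^2,
\]
and bound the bulk term directly by $Ch^{2(l-1)}\|v\|_{H^l(\Omega)}^2$ using the case $j=1$ of Assumption \ref{Assume:inequalities}(iii). For each boundary edge $E$, I apply the trace inequality to $w=v-\Pi_hv\in H^1(K_E)$ (note $v\in H^l(\Omega)\subset H^2(\Omega)$ and $\Pi_hv|_{K_E}\in H^2(K_E)$ by Assumption \ref{Assume:Local_Regularity}), multiply by $h_E^{-1}$, and use $h_{K_E}\le Ch_E$ from Assumption \ref{Assume:EqualityatExternalElement} to absorb the $h_{K_E}^2$ factor, obtaining a bound of the shape $Ch_{K_E}^{-2}\|v-\Pi_hv\|_{L^2(K_E)}^2+C|v-\Pi_hv|_{H^1(K_E)}^2$. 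Summing over $E\in\mathcal{E}_h^e$ (each boundary element contributes at most a bounded number of times) and then extending to the whole $\Omega$ gives
\[
\sum_{E\in\mathcal{E}_h^e}h_E^{-1}\|v-\Pi_hv\|_{L^2(E)}^2 \le Ch^{-2}\|v-\Pi_hv\|_{L^2(\Omega)}^2 + C|v-\Pi_hv|_{H^1(\Omega)}^2.
\]
The cases $j=0$ and $j=1$ of (iii) then bound the right-hand side by $Ch^{2(l-1)}\|v\|_{H^l(\Omega)}^2$, which closes \eqref{Eq:VhError}.

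For \eqref{Eq:VError}, the norm $\|\cdot\|_V$ differs from $\|\cdot\|_{V_h}$ only by the piecewise-$H^2$ term $\sum_{K\in\mathcal{T}_h}h_K^2|v-\Pi_hv|_{H^2(K)}^2$. The boundary and bulk pieces are controlled exactly as above, so it suffices to handle this extra term. Using $h_K\le h$ and the case $j=2$ of Assumption \ref{Assume:inequalities}(iii) (read element by element, which is meaningful since $l\ge 2$ and $\Pi_hv|_K\in H^2(K)$), I obtain
\[
\sum_{K\in\mathcal{T}_h}h_K^2|v-\Pi_hv|_{H^2(K)}^2 \le h^2\cdot Ch^{2(l-2)}\|v\|_{H^l(\Omega)}^2 = Ch^{2(l-1)}\|v\|_{H^l(\Omega)}^2,
\]
and combining with the $V_h$-bound already obtained yields \eqref{Eq:VError}.

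No genuine obstacle is anticipated; the estimates are almost bookkeeping. The two subtleties worth flagging in the write-up are, first, that the $j=2$ bound must be understood as a broken/piecewise statement because $V_h$ is not assumed to lie in $H^2(\Omega)$, and second, that the $h_E^{-1}$ weight on the boundary is exactly compensated by the trace inequality together with \eqref{eq:ke}, so that the boundary contribution is of the same order $h^{l-1}$ as the bulk $H^1$ error rather than of lower order.
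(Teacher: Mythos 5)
Your route is the intended one: the paper itself gives no proof of this lemma, declaring it ``a direct consequence of Assumptions \ref{Assume:EqualityatExternalElement} and \ref{Assume:inequalities}'' and deferring to Thom\'{e}e's Lemma 2.3, and your argument (local trace inequality on boundary elements, \eqref{eq:ke} to absorb the mesh weights, interpolation estimates with $j=0,1,2$, broken reading of the $j=2$ case) is exactly that standard argument. However, one step reverses an inequality and would fail on a general mesh. After the trace inequality you correctly arrive at elementwise bounds of the form $Ch_{K_E}^{-2}\|v-\Pi_hv\|_{L^2(K_E)}^2+C|v-\Pi_hv|_{H^1(K_E)}^2$, but you then replace $h_{K_E}^{-2}$ by $Ch^{-2}$ to pass to the global quantity
\[
\sum_{E\in\mathcal{E}_h^e}h_E^{-1}\|v-\Pi_hv\|_{L^2(E)}^2 \le Ch^{-2}\|v-\Pi_hv\|_{L^2(\Omega)}^2 + C|v-\Pi_hv|_{H^1(\Omega)}^2 .
\]
Since $h=\max_K h_K$, one only knows $h_{K_E}\le h$, i.e.\ $h_{K_E}^{-2}\ge h^{-2}$: the substitution goes the wrong way. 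The display is valid only if $h\le Ch_{K_E}$ for all boundary elements, i.e.\ under (at least boundary) quasi-uniformity of the mesh family. This is not a removable blemish within the paper's abstract framework: Assumption \ref{Assume:inequalities}(iii) provides only global-$h$ interpolation bounds, and with those alone the weighted boundary term genuinely cannot be controlled on non-quasi-uniform meshes.

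There are two ways to close the gap, and it is worth saying which one you adopt. Either add quasi-uniformity explicitly --- this is the ``inverse assumption'' the paper invokes for the FEM example of Section \ref{sec:3}, and it is in force in Thom\'{e}e's setting, so your proof then stands as written --- or keep the mesh general and work entirely locally: use elementwise interpolation estimates $\|v-\Pi_hv\|_{L^2(K_E)}\le Ch_{K_E}^{l}\|v\|_{H^l(\widetilde{K}_E)}$, as the quasi-interpolants of Section \ref{sec:3} provide for both FEM and IGA, so that
\[
h_{K_E}^{-2}\|v-\Pi_hv\|_{L^2(K_E)}^2 \le Ch_{K_E}^{2(l-1)}\|v\|_{H^l(\widetilde{K}_E)}^2 \le Ch^{2(l-1)}\|v\|_{H^l(\widetilde{K}_E)}^2,
\]
where $h_{K_E}\le h$ is now used in the harmless direction, and then sum over $E$ using bounded overlap of the patches $\widetilde{K}_E$. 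Everything else in your write-up is correct: the bulk $H^1$ term, the $H^1$ part of the trace bound via \eqref{eq:ke}, the bounded-multiplicity summation over boundary edges, and the broken $H^2$ term with $h_K\le h$ (together with your correct observation that the $j=2$ estimate must be read piecewise since $V_h\not\subset H^2(\Omega)$).
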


We recall that the exact solution is $u$ assumed to belong to $X_{l,m} = H^1(J;H^l(\Omega),H^m(\Omega))$ for $2\le l,m\le k+1$. Then $\Pi_h$ defines a projection from $X_{l,m}$ to $X_h$. We designate by $\Pi_h$ again, i.e., \begin{align*}
(\Pi_h u)(t) = \Pi_hu(t)\quad(t\in \overline{J}),\qquad \partial_t(\Pi_hu)(t) = \Pi_h(\partial_tu(t))\quad(t\in J).
\end{align*}
The estimate in Theorem \ref{Lem:estimate_from_consistency} is valid for $z_h = \Pi_hu$, which gives the following proof.

\begin{proof}[Proof of Theorem \ref{Thm:ErrorEstimateinSpatialSemiDiscretization}]
It is readily apparent that $X_{\ell,m}\subset X_V$. Therefore Theorem \ref{Lem:estimate_from_consistency} implies
\begin{align*}
\|u-u_{\ep,h}\|_{X_h}^2 &\le \displaystyle\inf_{z_h\in X_h}C\|u-z_h\|_{X_V}^2\\
&\le \|u-\Pi_hu\|_{X_V}^2\\
&=\displaystyle\int_J\left(\|u-\Pi_hu\|_{V}^2+\|\partial_t(u-\Pi_hu)\|_{V_h'}^2\right)dt+\|u(0)-\Pi_hu(0)\|_{L^2(\Omega)}^2.
\end{align*}
We can estimate that
\begin{equation}
\|u-\Pi_h u\|_{V}^2 \le Ch^{2(l-1)}\|u\|_{H^l(\Omega)},
\end{equation}
\begin{equation}
\|\partial_t(u-\Pi_h u)|_{V_h'}^2 \le \|\partial_tu-\Pi_h(\partial_tu)\|_{L^2(\Omega)}\le Ch^{2m}\|\partial_tu\|_{H^m(\Omega)}.
\end{equation}
Moreover, because
\begin{equation}
X_{l,m} = H^1(J;H^l(\Omega),H^m(\Omega))\subset H^1(J;H^j(\Omega),H^j(\Omega))\subset C^0(\overline{J};H^j(\Omega)),
\end{equation}
where $j=\min\{l,m\}$, we have $u(0)\in H^j(\Omega)$, and
\begin{equation}
\|u(0)-\Pi_hu(0)\|_{L^2(\Omega)}^2 \le Ch^{2j}\|u(0)\|_{H^j(\Omega)}^2.
\end{equation}
Summing up those estimates, we complete the proof.
\end{proof}

\section{Full discretization}
\label{sec:7}
Let $N\in\mathbb{N}$ be the number of time steps, $\tau=T/N$ and $t_n = n\tau$.
We now consider the temporal discretization with implicit Euler (backward Euler) method. 

\smallskip

\noindent\textbf{(P$_{\ep,h,\tau}$)} 
Find $\{u_{\varepsilon,h,\tau}^n\}_{n=0}^N\in (V_h)^{N+1}$ such that
\begin{subequations}
\begin{gather}
\dfrac{1}{\tau}(u_{\varepsilon,h,\tau}^n-u_{\varepsilon,h,\tau}^{n-1},v_h)_{L^2(\Omega)} + a_{\ep,h}(t_n;u_{\varepsilon,h,\tau}^n,v_h) = F_{\ep,h}(t_n;v_h)\quad(v_h\in V_h),\label{Eq:FullDiscretizedProblem}\\
(u_{\varepsilon,h,\tau}^0-u_0,\widetilde{v}_h)_{L^2(\Omega)}=0\quad(\widetilde{v}_h\in V_h)\label{Eq:FullDiscreteInitioal}
\end{gather}
\end{subequations}

Clearly, $u_{\varepsilon,h,\tau}^0 = u_{\varepsilon,h}(0)$, where $u_{\varepsilon,h}\in X_h$ represents the solution of \textbf{(P$_{\ep,h}$)}.

\begin{lemma}
The problem \textup{\textbf{(P$_{\ep,h,\tau}$)}} has a unique solution $\{u_{\varepsilon,h,\tau}^n\}_{n=0}^N\in (V_h)^{N+1}$.
\end{lemma}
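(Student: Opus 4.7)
The plan is to argue by induction on the time level $n$, reducing the existence and uniqueness question to a well-posed linear problem on the finite-dimensional space $V_h$ at each step.

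First I would handle the initial step \eqref{Eq:FullDiscreteInitioal}. This requires finding $u_{\varepsilon,h,\tau}^0\in V_h$ such that $(u_{\varepsilon,h,\tau}^0,\widetilde{v}_h)_{L^2(\Omega)}=(u_0,\widetilde{v}_h)_{L^2(\Omega)}$ for all $\widetilde{v}_h\in V_h$. Since the bilinear form $(\cdot,\cdot)_{L^2(\Omega)}$ is continuous and coercive on $V_h\subset L^2(\Omega)$ and the right-hand side is a continuous linear functional on $V_h$, the Lax--Milgram theorem (or simply the invertibility of the Gram matrix of any basis of $V_h$) produces a unique $u_{\varepsilon,h,\tau}^0$. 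This is precisely the $L^2$-projection of $u_0$ onto $V_h$.

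For the inductive step, suppose $u_{\varepsilon,h,\tau}^{n-1}\in V_h$ has been constructed. Rewrite \eqref{Eq:FullDiscretizedProblem} as the problem of finding $u_{\varepsilon,h,\tau}^n\in V_h$ such that
\begin{equation*}
B_n(u_{\varepsilon,h,\tau}^n,v_h):=\tfrac{1}{\tau}(u_{\varepsilon,h,\tau}^n,v_h)_{L^2(\Omega)}+a_{\ep,h}(t_n;u_{\varepsilon,h,\tau}^n,v_h)=F_{\ep,h}(t_n;v_h)+\tfrac{1}{\tau}(u_{\varepsilon,h,\tau}^{n-1},v_h)_{L^2(\Omega)}
\end{equation*}
for every $v_h\in V_h$. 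Theorem \ref{Thm:Continuity_of_b} provides continuity of $a_{\ep,h}(t_n;\cdot,\cdot)$ on $V_h\times V_h$, while the Cauchy--Schwarz inequality together with $\|\cdot\|_{L^2(\Omega)}\le\|\cdot\|_{V_h}$ controls the mass term; hence $B_n$ is continuous on $V_h\times V_h$. Theorem \ref{Thm:Coercivity_of_b} gives $a_{\ep,h}(t_n;v_h,v_h)\ge\widehat{\alpha}\|v_h\|_{V_h}^2$, and adding the non-negative mass term preserves coercivity, so $B_n(v_h,v_h)\ge\widehat{\alpha}\|v_h\|_{V_h}^2$. The right-hand side is clearly a continuous linear functional on $V_h$ (the data $f(\cdot,t_n)$, $g(\cdot,t_n)$ act through bounded boundary and volume integrals, and the term involving $u_{\varepsilon,h,\tau}^{n-1}$ is controlled by $\tau^{-1}\|u_{\varepsilon,h,\tau}^{n-1}\|_{L^2(\Omega)}\|v_h\|_{V_h}$). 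Applying the Lax--Milgram theorem on the finite-dimensional Hilbert space $(V_h,\|\cdot\|_{V_h})$ yields a unique $u_{\varepsilon,h,\tau}^n\in V_h$.

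There is no real obstacle here; the statement is a direct corollary of the coercivity result already proved in Theorem \ref{Thm:Coercivity_of_b}, since the addition of the mass term $\tau^{-1}(\cdot,\cdot)_{L^2(\Omega)}$ can only strengthen coercivity. The only minor point to mention is that one should verify that the functional $F_{\ep,h}(t_n;\cdot)$ is well defined and continuous on $V_h$ under Assumption \ref{Assume:Ellipticity}, which is immediate from the regularity of $f$, $g$ and the trace inequality \eqref{Eq:Trace_Constant}. Induction on $n=1,\ldots,N$ then concludes the proof.
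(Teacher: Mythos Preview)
Your proposal is correct and follows essentially the same approach as the paper: both arguments reduce the well-posedness at each time level to the coercivity of $a_{\ep,h}(t_n;\cdot,\cdot)$ from Theorem~\ref{Thm:Coercivity_of_b}, together with the non-negative contribution of the mass term. The only cosmetic difference is that the paper chooses a basis of $V_h$ and shows the system matrix $\mathbf{M}+\tau\mathbf{A}^n$ is positive definite, whereas you invoke Lax--Milgram directly on $(V_h,\|\cdot\|_{V_h})$; these are equivalent formulations of the same fact.
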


\begin{proof}
It is readily apparent that $u_{\varepsilon,h,\tau}^0\in V_h$. We take the basis functions $\{\phi_i\}_{i=1}^N$ of $V_h$, where $N=\operatorname{dim}V_h$, and
\begin{equation}
u_{\varepsilon,h,\tau}^n:=\displaystyle\sum_{i=1}^{N}u_{\varepsilon,h,\tau,i}^n\phi_i .
\end{equation}
Then, the equation \eqref{Eq:FullDiscretizedProblem} implies the following for $n=1,\cdots,N$,
\begin{equation}
\left(\mathbf{M}+\Delta t\mathbf{A}^n\right)\mathbf{u}_{\varepsilon,h,\tau}^n= \mathbf{M}\mathbf{u}_{\varepsilon,h,\tau}^{n-1}+\Delta t\mathbf{F}
^n,
\end{equation}
where $(\mathbf{M})_{ij}:=(\phi_i,\phi_j)_{L^2(\Omega)}$, $(\mathbf{A}^n)_{ij}:=a_{\ep,h}(t_n;\phi_i,\phi_j)$, $(\mathbf{F}^n)_i:=F_{\ep,h}(t_n;\phi_i)$ and $\mathbf{u}_{\varepsilon,h,\tau}^n:=(u_{\varepsilon,h,\tau,1}^n,\cdots,u_{\varepsilon,h,\tau,N})^{\operatorname{T}}$.  
Theorem \ref{Thm:Coercivity_of_b} gives $\mathbf{M}+\Delta t\mathbf{A}^n$ as a positive definite matrix.
Therefore, there exists a unique $u_{\varepsilon,h,\tau}^n$ for any $u_{\varepsilon,h,\tau}^{n-1}$.
\end{proof}

Let 
\begin{equation}
\mathcal{S}_{\tau} = \left\{w_h:[0,T]\to V_h \mid
\begin{array}{c}
\mbox{ For all }n=1,\cdots, N,\mbox{ there exists }v_h\in V_h\\
\mbox{ such that }w_h|_{(t_{n-1},t_n]}=v_h, \mbox{ and }w_h(0)\in V_h
\end{array}\right\},
\end{equation}
and $v_{h,\tau}(t_n^+) = \lim_{t\to t_n+0}v_{h,\tau}(t)$ for $v_{h,\tau}\in\mathcal{S}_{\tau}$.
It is noteworthy that $\mathcal{S}_{\tau}\subset L^2\left(J;V_h\right)$. The solution of \textbf{(P$_{\ep,h,\tau}$)} can be extended to an element of $\mathcal{S}_{\tau}$ as 
\begin{equation}
u_{\varepsilon,h,\tau}(t) = \left\{\begin{array}{ll}
u_0&\mbox{if }t=0,\\
u_{\varepsilon,h,\tau}^{n+1}&\mbox{if }t\in (t_{n},t_{n+1}]\quad(n=0,\cdots, N-1).
\end{array}\right.
\end{equation}

One can show that $u_{\varepsilon,h,\tau}\in\mathcal{S}_{\tau}$ satisfies the following estimate.

\begin{lemma}
\label{Lem:ErrorEstimateinTemporalDiscretization}
Assuming that $u_{\varepsilon,h}\in C^2\left(\overline{J};V_h\right)$, then let $\Pi_{\tau}u_{\varepsilon,h}\in\mathcal{S}_{\tau}$ be
\begin{equation}
\Pi_{\tau}u_{\varepsilon,h}(t) = \left\{\begin{array}{ll}
u_0&\mbox{if }t=0,\\
u_{\varepsilon,h}(t_{n+1})&\mbox{if }t\in(t_n,t_{n+1}] \quad(n=0,\cdots, N-1)
\end{array}\right.
\end{equation}
and $\rho = u_{\varepsilon,h,\tau}-\Pi_{\tau}u_{\varepsilon,h}\in\mathcal{S}_{\tau}$,
where $u_{\varepsilon,h}\in X_h$ is the solution of \textup{\textbf{(P$_{\ep,h}$)}}. Based on the relations presented above,  
\begin{subequations}
\begin{align}
&\displaystyle\max_{n=0,\cdots, N-1}\|\rho(t_n^+)\|_{L^2(\Omega)}\le \sqrt{\dfrac{T}{8\widehat{\alpha}}}\tau\|\partial_t^2u_{\varepsilon,h}\|_{L^{\infty}(J;L^2(\Omega))},\\
&\|\rho\|_{L^2(J;V_h)}\le\dfrac{\sqrt{T}}{2\widehat{\alpha}}\tau\|\partial_t^2u_{\varepsilon,h}\|_{L^{\infty}(J;L^2(\Omega))}.
\end{align}
\end{subequations}
\end{lemma}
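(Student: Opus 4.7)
The plan is to introduce the pointwise discrete error $\rho^n := u_{\varepsilon,h,\tau}^n - u_{\varepsilon,h}(t_n)$ and run a standard backward-Euler energy argument, but with two different Young-inequality splittings---one sharp, one looser---so the two different target constants drop out.

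First I would make the identifications that reduce the stated conclusions to bounds on $\{\rho^n\}_{n=1}^N$. The initial conditions \eqref{Eq:FullDiscreteInitioal} and \eqref{Eq:Nitsche_for_each_time2} both characterise the starting iterate as the $L^2$-projection of $u_0$ onto $V_h$, so $\rho^0 = 0$. From the definitions of $\Pi_\tau u_{\varepsilon,h}$ and of the piecewise-constant extension of $u_{\varepsilon,h,\tau}$, one checks $\rho|_{(t_{n-1},t_n]} \equiv \rho^n$, whence $\rho(t_n^+) = \rho^{n+1}$ and $\|\rho\|_{L^2(J;V_h)}^2 = \tau \sum_{n=1}^N \|\rho^n\|_{V_h}^2$.

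Second, evaluating \eqref{Eq:Nitsche_for_each_time1} at $t = t_n$ (legitimate thanks to $u_{\varepsilon,h} \in C^2(\overline J; V_h)$) and subtracting it from \eqref{Eq:FullDiscretizedProblem} yields the error identity
\[
\tfrac{1}{\tau}(\rho^n - \rho^{n-1}, v_h)_{L^2(\Omega)} + a_{\varepsilon,h}(t_n;\rho^n, v_h) = (R^n, v_h)_{L^2(\Omega)} \qquad (v_h \in V_h),
\]
with consistency error
\[
R^n := \partial_t u_{\varepsilon,h}(t_n) - \tfrac{1}{\tau}\bigl(u_{\varepsilon,h}(t_n) - u_{\varepsilon,h}(t_{n-1})\bigr) = \tfrac{1}{\tau}\int_{t_{n-1}}^{t_n}(s-t_{n-1})\,\partial_t^2 u_{\varepsilon,h}(s)\,ds
\]
by the Taylor integral remainder, so $\|R^n\|_{L^2(\Omega)} \le (\tau/2)\|\partial_t^2 u_{\varepsilon,h}\|_{L^{\infty}(J;L^2(\Omega))}$. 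Testing with $v_h = \rho^n$, using the identity $(\rho^n - \rho^{n-1},\rho^n) \ge \tfrac{1}{2}(\|\rho^n\|_{L^2}^2 - \|\rho^{n-1}\|_{L^2}^2)$, the coercivity from Theorem \ref{Thm:Coercivity_of_b}, and $(R^n,\rho^n) \le \|R^n\|_{L^2}\|\rho^n\|_{V_h}$, I arrive at
\[
\tfrac{1}{2\tau}\bigl(\|\rho^n\|_{L^2}^2 - \|\rho^{n-1}\|_{L^2}^2\bigr) + \widehat{\alpha}\|\rho^n\|_{V_h}^2 \le \|R^n\|_{L^2}\|\rho^n\|_{V_h}.
\]

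Third, the two conclusions follow from two different Young splittings of the right-hand side. For the pointwise $L^2$-estimate I take weight $\widehat{\alpha}$, which cancels the $\widehat{\alpha}\|\rho^n\|_{V_h}^2$ term completely and leaves $\tfrac{1}{2\tau}(\|\rho^n\|_{L^2}^2 - \|\rho^{n-1}\|_{L^2}^2) \le \tfrac{1}{4\widehat{\alpha}}\|R^n\|_{L^2}^2$; telescoping from $1$ to $m$, using $\rho^0 = 0$ and $m\tau \le T$, gives the first assertion with constant $\sqrt{T/(8\widehat{\alpha})}$. For the $L^2(J;V_h)$-estimate I take the looser weight $\widehat{\alpha}/2$, leaving $\tfrac{\widehat{\alpha}}{2}\|\rho^n\|_{V_h}^2$ on the left; multiplying by $\tau$, summing over $n=1,\dots,N$, and discarding the nonnegative boundary term $\|\rho^N\|_{L^2}^2$ delivers $\|\rho\|_{L^2(J;V_h)}^2 \le (T\tau^2)/(4\widehat{\alpha}^2)\|\partial_t^2 u_{\varepsilon,h}\|^2$, the second assertion. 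No step presents a real obstacle; the only mild subtlety is tuning the two Young weights so that the precise constants $\sqrt{T/(8\widehat{\alpha})}$ and $\sqrt{T}/(2\widehat{\alpha})$ in the statement emerge sharply.
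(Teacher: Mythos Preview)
Your proof is correct and follows essentially the same route as the paper: derive the error equation for $\rho^n$, test with $\rho^n$, use the polarization identity and coercivity from Theorem~\ref{Thm:Coercivity_of_b}, then apply Young's inequality with the two weights $\widehat{\alpha}$ and $\widehat{\alpha}/2$ to obtain the two bounds with the stated constants. The only cosmetic differences are that the paper works in the $\rho(t_n^+)$ notation and uses the Lagrange form of the Taylor remainder (your integral form is in fact cleaner for $V_h$-valued functions).
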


\begin{proof}
Clearly, $\rho(t_0)=0$. In fact, $u_{\varepsilon,h}\in C^2\left(\overline{J};V_h\right)$ and the definition of $u_{\ep,h,\tau}$ yield
\begin{subequations}
\begin{align}
u_{\varepsilon,h,\tau}^{n+1} = u_{\varepsilon,h,\tau}(t_n^+) = u_{\varepsilon,h,\tau}(t_{n+1}), \label{Eq:FullDiscrete_Equal1}\\
u_{\varepsilon,h}(t_{n+1}) = \Pi_{\tau}u_{\varepsilon,h}(t_n^+) = \Pi_{\tau}u_{\varepsilon,h}(t_{n+1}) \label{Eq:FullDiscrete_Equal2}.
\end{align}
\end{subequations}
Using \eqref{Eq:FullDiscrete_Equal1}, equation \eqref{Eq:FullDiscretizedProblem} is written, equivalently, as
\begin{multline}
(u_{\varepsilon,h,\tau}(t_n^+)-u_{\varepsilon,h,\tau}(t_n),v_{h,\tau}(t_n^+))_{L^2(\Omega)} + \tau a_{\ep,h}(t_{n+1};u_{\varepsilon,h,\tau}(t_{n}^+),v_{h,\tau}(t_n^+))\\=\tau F_{\ep,h}(t_{n+1};v_{h,\tau}(t_n^+))\quad (v_{h,\tau}\in\mathcal{S}_{\tau}).
\end{multline}


Furthermore, equations \eqref{Eq:FullDiscrete_Equal2} and \eqref{Eq:Nitsche_for_each_time} yield
\begin{multline}
(\Pi_{\tau}u_{\varepsilon,h}(t_n^+)-\Pi_{\tau}u_{\varepsilon,h}(t_n) + \tau^2/2\partial_t^2u_{\varepsilon,h}(c_n),v_{h,\tau}(t_n^+))_{L^2(\Omega)}+ \tau a_{\ep,h}(t_{n+1};\Pi_{\tau}u_{\varepsilon,h}(t_n^+),v_{h,\tau}(t_n^+))\\
 = \tau F_{\ep,h}(t_{n+1};v_{h,\tau}(t_n^+))\quad(v_{h,\tau}\in\mathcal{S}_{\tau})
\end{multline}
for some $c_n\in (t_n,t_{n+1}]$. 
By the two equations presented above, setting $v_{h,\tau} = \rho\in \mathcal{S}_{\tau}$ leads to
\begin{equation}
(\rho(t_n^+)-\rho(t_n),\rho(t_n^+))_{L^2(\Omega)} - \tau^2/2(\partial_t^2u_{\varepsilon,h}(c_n),\rho(t_n^+))_{L^2(\Omega)} + \tau a_{\ep,h}(t_{n+1};\rho,\rho) =0.
\end{equation}
Now we can estimate
\begin{subequations}
\begin{align*}
(\rho(t_n^+)-\rho(t_n),\rho(t_n^+))_{L^2(\Omega)} &= \dfrac{1}{2}(\|\rho(t_n^+)-\rho(t_n)\|_{L^2(\Omega)}^2 + \|\rho(t_n^+)\|_{L^2(\Omega)}^2 - \|\rho(t_n)\|_{L^2(\Omega)}^2)\\
&\ge \dfrac{1}{2}(\|\rho(t_n^+)\|_{L^2(\Omega)}^2 - \|\rho(t_n)\|_{L^2(\Omega)}^2),
\end{align*}
\begin{align*}
- \tau^2/2(\partial_t^2u_{\varepsilon,h}(c_n),\rho(t_n^+))_{L^2(\Omega)} &\ge -\tau^{3/2}/2\|\partial_t^2u_{\varepsilon,h}\|_{L^{\infty}(J;L^2(\Omega))}\|\rho\|_{L^2(t_n,t_{n+1};V_h)}\\
&\ge -\left(\dfrac{\tau^{3}}{16q}\|\partial_t^2u_{\varepsilon,h}\|_{L^{\infty}(J;L^2(\Omega))}^2+q\|\rho\|_{L^2(t_n,t_{n+1};V_h)}^2\right) 
\end{align*}
for any $q\in\mathbb{R}$, and
\begin{equation*}
\tau a_{\ep,h}(t_{n+1};\rho,\rho) \ge \widehat{\alpha}\|\rho\|_{L^2(t_n,t_{n+1};V_h)}.
\end{equation*}
\end{subequations}

Therefore, letting $q = \widehat{\alpha}$ yields
\begin{align*}
\|\rho(t_n^+)\|_{L^2(\Omega)}^2&\le \|\rho(t_{n-1}^+)\|_{L^2(\Omega)}^2+\dfrac{\tau^{3}}{8\widehat{\alpha}}\|\partial_t^2u_{\varepsilon,h}\|_{L^{\infty}(J;L^2(\Omega))}^2\\
&\le \|\rho(t_0)\|_{L^2(\Omega)}^2 + \dfrac{\tau^{3}(n+1)}{8\widehat{\alpha}}\|\partial_t^2u_{\varepsilon,h}\|_{L^{\infty}(J;L^2(\Omega))}^2\\
&\le \dfrac{T}{8\widehat{\alpha}}\tau^2\|\partial_t^2u_{\varepsilon,h}\|_{L^{\infty}(J;L^2(\Omega))}^2
\end{align*}
for all $n=0,\cdots, N=1$. Furthermore, if we set $q=\dfrac{\widehat{\alpha}}{2}$, then we have
\begin{equation}
\widehat{\alpha}\|\rho\|_{L^2(t_n,t_{n+1};V_h)}^2+\|\rho(t_n^+)\|_{L^2(\Omega)}^2 - \|\rho(t_n)\|_{L^2(\Omega)}^2\le \dfrac{\tau^3}{4\widehat{\alpha}}\|\partial_t^2u_{\varepsilon,h}\|_{L^{\infty}(J;L^2(\Omega))}^2.
\end{equation}
Summing this from $n=0$ to $n=N-1$ gives
\begin{align*}
\|\rho\|_{L^2(J;V_h)}^2 &\le \dfrac{1}{\widehat{\alpha}}\|\rho(t_0)\|_{L^2(\Omega)}^2 + \dfrac{\tau^3N}{4\widehat{\alpha}^2}\|\partial_t^2u_{\varepsilon,h}\|_{L^{\infty}(J;L^2(\Omega))}^2\\
&\le \dfrac{T}{4\widehat{\alpha}^2}\tau^2\|\partial_t^2u_{\varepsilon,h}\|_{L^{\infty}(J;L^2(\Omega))}^2.
\end{align*}
\end{proof}

\begin{theorem}[Error estimate]
\label{Thm:Error_for_Full_Discrete}
Let $l,m$ be two integers satisfying $2\le l,m\le k+1$. Assuming that $u\in X_{l,m}$ and $u_{\varepsilon,h}\in C^2(\overline{J};V_h)$, then there exists a positive constant $C$ such that
\begin{align*}
\|u-u_{\varepsilon,h,\tau}\|_{L^2(J;H^1(\Omega))}^2 &\le C\Biggl(\displaystyle\int_J(h^{2(l-1)}\|u\|_{H^{l}(\Omega)}^2+h^{2m}\|\partial_tu\|_{H^m(\Omega)}^2)~dt+h^{2j}\|u(0)\|_{H^{j}(\Omega)}^2\\
&\qquad + \tau^2\|\partial_tu_{\varepsilon,h}\|_{L^2(J;H^1(\Omega))}^2 + \dfrac{T}{4\widehat{\alpha}^2}\tau^2\|\partial_t^2u_{\varepsilon,h}\|_{L^{\infty}(J;L^2(\Omega))}^2\Biggr),
\end{align*}
where $j:=\min\{l,m\}$.
\end{theorem}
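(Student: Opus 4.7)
The plan is to combine the already-established spatial semi-discrete error estimate (Theorem \ref{Thm:ErrorEstimateinSpatialSemiDiscretization}) with the temporal discretization error estimate (Lemma \ref{Lem:ErrorEstimateinTemporalDiscretization}) through the triangle inequality
\[
\|u-u_{\varepsilon,h,\tau}\|_{L^2(J;H^1(\Omega))} \le \|u-u_{\varepsilon,h}\|_{L^2(J;H^1(\Omega))} + \|u_{\varepsilon,h}-u_{\varepsilon,h,\tau}\|_{L^2(J;H^1(\Omega))}.
\]
For the first summand, I would observe that $\|\cdot\|_{L^2(J;H^1(\Omega))}\le\|\cdot\|_{L^2(J;V_h)}\le\|\cdot\|_{X_h}$ on $X_h$, so Theorem \ref{Thm:ErrorEstimateinSpatialSemiDiscretization} immediately produces the three spatial terms appearing on the right-hand side.

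For the second summand, I would insert the piecewise-constant temporal interpolant $\Pi_\tau u_{\varepsilon,h}$ and split
\[
\|u_{\varepsilon,h}-u_{\varepsilon,h,\tau}\|_{L^2(J;V_h)} \le \|u_{\varepsilon,h}-\Pi_\tau u_{\varepsilon,h}\|_{L^2(J;V_h)} + \|\Pi_\tau u_{\varepsilon,h}-u_{\varepsilon,h,\tau}\|_{L^2(J;V_h)}.
\]
The second piece is exactly $\|\rho\|_{L^2(J;V_h)}$, which is controlled by $\frac{\sqrt{T}}{2\widehat{\alpha}}\tau\|\partial_t^2 u_{\varepsilon,h}\|_{L^{\infty}(J;L^2(\Omega))}$ by Lemma \ref{Lem:ErrorEstimateinTemporalDiscretization}, producing the last term in the bound.

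For the interpolation piece, on each interval $(t_n,t_{n+1}]$ I would use the fundamental theorem of calculus: $u_{\varepsilon,h}(t)-u_{\varepsilon,h}(t_{n+1}) = -\int_t^{t_{n+1}} \partial_t u_{\varepsilon,h}(s)\,ds$. Applying the Cauchy--Schwarz inequality in $s$ and integrating in $t$ yields
\[
\int_{t_n}^{t_{n+1}} \|u_{\varepsilon,h}(t)-u_{\varepsilon,h}(t_{n+1})\|_{V_h}^2\, dt \le \tau^2 \int_{t_n}^{t_{n+1}} \|\partial_t u_{\varepsilon,h}(s)\|_{V_h}^2\, ds,
\]
and summing over $n=0,\dots,N-1$ and bounding $\|\cdot\|_{V_h}$ above by $\|\cdot\|_{H^1(\Omega)}$ (up to equivalence) gives the remaining term $\tau^2\|\partial_t u_{\varepsilon,h}\|_{L^2(J;H^1(\Omega))}^2$. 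Squaring the triangle inequality and collecting constants finishes the estimate.

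Nothing here is especially delicate; the main thing to be careful with is lining up the norms correctly so that the $X_h$-norm output of Theorem \ref{Thm:ErrorEstimateinSpatialSemiDiscretization} is weakened to the $L^2(J;H^1(\Omega))$-norm that appears on the left and that Lemma \ref{Lem:ErrorEstimateinTemporalDiscretization}'s $L^2(J;V_h)$-bound likewise dominates the $L^2(J;H^1(\Omega))$-part. The hypothesis $u_{\varepsilon,h}\in C^2(\overline{J};V_h)$ is precisely what makes both the FTC argument for the interpolation piece and the hypothesis of Lemma \ref{Lem:ErrorEstimateinTemporalDiscretization} available.
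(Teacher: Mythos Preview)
Your decomposition is exactly the paper's: split $u-u_{\varepsilon,h,\tau}$ into $u-u_{\varepsilon,h}$, $u_{\varepsilon,h}-\Pi_\tau u_{\varepsilon,h}$, and $\Pi_\tau u_{\varepsilon,h}-u_{\varepsilon,h,\tau}$, then invoke Theorem \ref{Thm:ErrorEstimateinSpatialSemiDiscretization}, the piecewise-constant interpolation estimate, and Lemma \ref{Lem:ErrorEstimateinTemporalDiscretization} respectively. The paper simply cites ``the approximation error estimate for piecewise constant'' for the middle term, whereas you spell out the FTC/Cauchy--Schwarz argument; that is fine.

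There is, however, one slip in the norms. You carry out the interpolation estimate in the $V_h$-norm, obtaining $\|u_{\varepsilon,h}-\Pi_\tau u_{\varepsilon,h}\|_{L^2(J;V_h)}^2 \le \tau^2\|\partial_t u_{\varepsilon,h}\|_{L^2(J;V_h)}^2$, and then propose to bound $\|\cdot\|_{V_h}$ above by $\|\cdot\|_{H^1(\Omega)}$ ``up to equivalence'' to reach the stated term $\tau^2\|\partial_t u_{\varepsilon,h}\|_{L^2(J;H^1(\Omega))}^2$. That equivalence is \emph{not} available with $h$-independent constants: $\|v_h\|_{V_h}^2 = \|v_h\|_{H^1(\Omega)}^2 + \sum_{E}h_E^{-1}\|v_h\|_{L^2(E)}^2$, so only the one-sided inequality $\|\cdot\|_{H^1(\Omega)}\le\|\cdot\|_{V_h}$ holds uniformly in $h$. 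The fix is immediate and is what the paper does implicitly: run the FTC/Cauchy--Schwarz argument directly in the $H^1(\Omega)$-norm (the left-hand side of the theorem only requires $L^2(J;H^1(\Omega))$ anyway), and reserve the $V_h$-norm for the $\rho$-term where Lemma \ref{Lem:ErrorEstimateinTemporalDiscretization} is stated in that norm and the inequality $\|\rho\|_{L^2(J;H^1(\Omega))}\le\|\rho\|_{L^2(J;V_h)}$ goes the right way.
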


\begin{proof}
We have
\begin{equation*}
\|u-u_{\varepsilon,h,\tau}\|_{L^2(J;H^1(\Omega))}^2 \le \|u-u_{\varepsilon,h}\|_{X_h}^2 + \|u_{\varepsilon,h}-\Pi_{\tau}u_{\varepsilon,h}\|_{L^2(J;H^1(\Omega))}^2 + \|\Pi_{\tau}u_{\varepsilon,h}-u_{\varepsilon,h,\tau}\|_{L^2(J;V_h)}^2.
\end{equation*}
Here Theorem \ref{Thm:ErrorEstimateinSpatialSemiDiscretization}, Lemma \ref{Lem:ErrorEstimateinTemporalDiscretization} and the approximation error estimate for piecewise constant yield the result.
\end{proof}

\section{Numerical examples}
\label{sec:8}

\begin{figure}[tb]
\centering
\includegraphics[width=\linewidth]{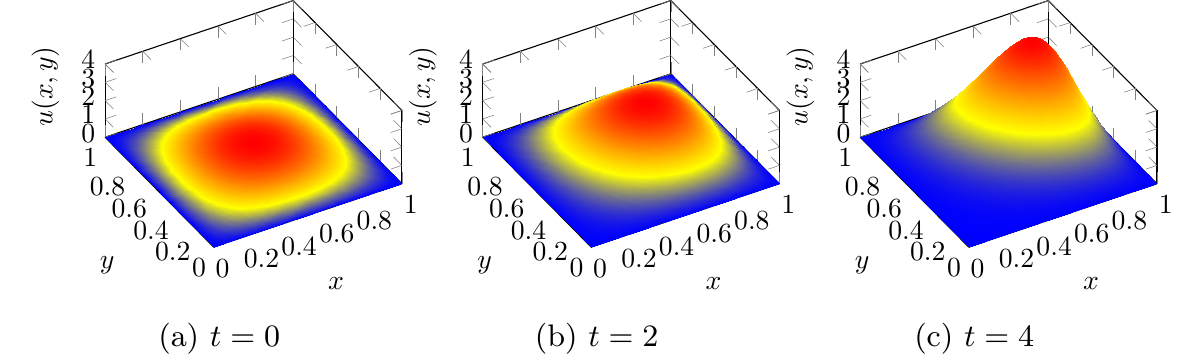}
\caption{Exact solutions at different time steps.}
\label{Fig:ExactSolutionforNitsche}
\end{figure}

\begin{figure}[tb]
\centering
\includegraphics[width=\linewidth]{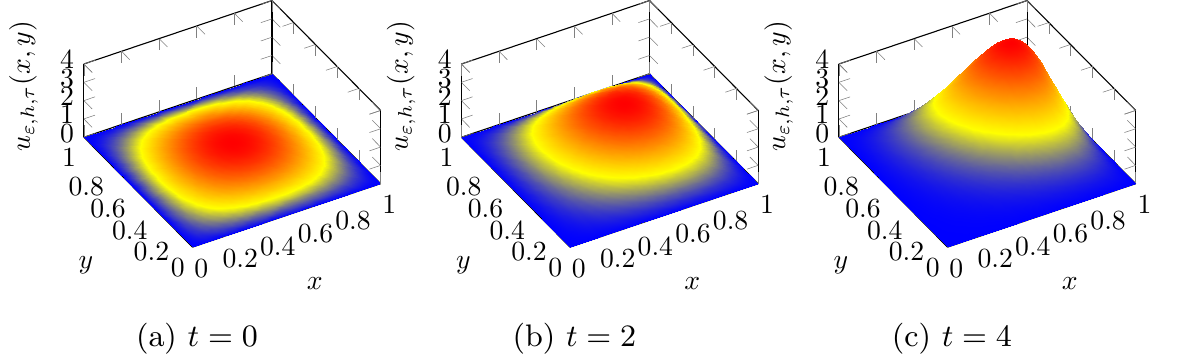}
\caption{Numerical solutions of the Nitsche method at different time steps by consideration of a uniform mesh with mesh size $h=1/30$.}
\label{Fig:NumericalSolutionforNitsche}
\end{figure}

\begin{figure}[tb]
\centering
\includegraphics[width=\linewidth]{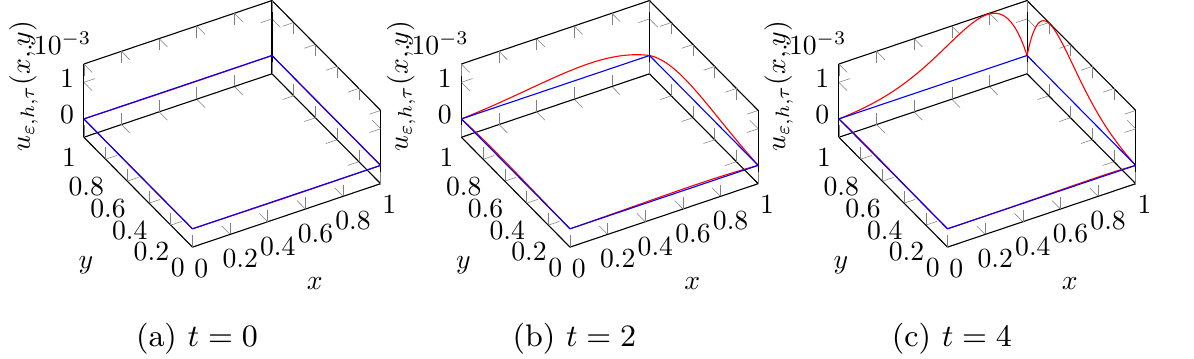}
\caption{Numerical solutions of the Nitsche method on boundary $\Gamma$, at different time steps by consideration of a uniform mesh with mesh size $h=1/30$.}
\label{Fig:OnBoundary}
\end{figure}

Our example is given as $\Omega := (0,1)^2$, $J=(0,4)$, and 
\begin{equation}
\left\{\begin{array}{rll}
\partial_tu - \Delta u + (1,1)^{\mathrm{T}}\cdot\nabla u + u &= f(\mathbf{x},t)&\mbox{in }\Omega\times J,\\
u &= 0&\mbox{on }\Gamma\times J,\\
u(\mathbf{x},0) &= \sin(\pi x)\sin(\pi y)&\mbox{for }\mathbf{x} = (x,y)\in \Omega.
\end{array}\right.
\end{equation}
That is, we let $\mu(\mathbf{x},t)=I$, $\mathbf{a}(\mathbf{x},t) =
(1,1)^{\mathrm{T}}$ and $c(\mathbf{x},t) = 1$.
One can readily check that this problem has a unique solution for any $f\in L^2(J;H^{-1}(\Omega))$. We let
\begin{align*}
f(x,y,t) &:= \left((x+y+2t-2t^2+2\pi^2)\sin(\pi x)\sin(\pi y)\right.\\
&\hspace{1cm}\left. + (\pi-2\pi t)\cos(\pi x)\sin(\pi y) + (\pi-2\pi t)\sin(\pi x)\cos(\pi y)\right)e^{(x+y-1)t},
\end{align*}
Then $f\in L^2(J;H^1(\Omega))$, and
\begin{equation}
u(x,y,t) :=  \sin(\pi x)\sin(\pi y)e^{(x+y-1)t}
\end{equation}
is the unique solution.
In Figure \ref{Fig:ExactSolutionforNitsche}, we show the exact solution at different time steps.

We use the $(k,k)$-th degree B-spline basis functions for spatial discretization using the uniform mesh and implicit Euler scheme for temporal discretization, where $k=1,2$: we consider the approximate problem \eqref{Eq:FullDiscretizedProblem}. We let $\tau = O(h^k)$, where $h$ is the mesh size for uniform mesh. Then we know from Theorem \ref{Thm:Error_for_Full_Discrete} that
\begin{equation}
\|u-u_{\varepsilon,h,\tau}\|_{L^2\left(J;H^1(\Omega)\right)} \le Ch^k.
\end{equation}

As shown in Figure \ref{Fig:NumericalSolutionforNitsche}, this report describes the numerical solution shape. Then we show the boundary value of the numerical solution in Figure \ref{Fig:OnBoundary}. The weak imposition of the Dirichlet boundary condition is actually observed because the boundary value does not vanish.

\begin{figure}[tb]
\centering
\includegraphics{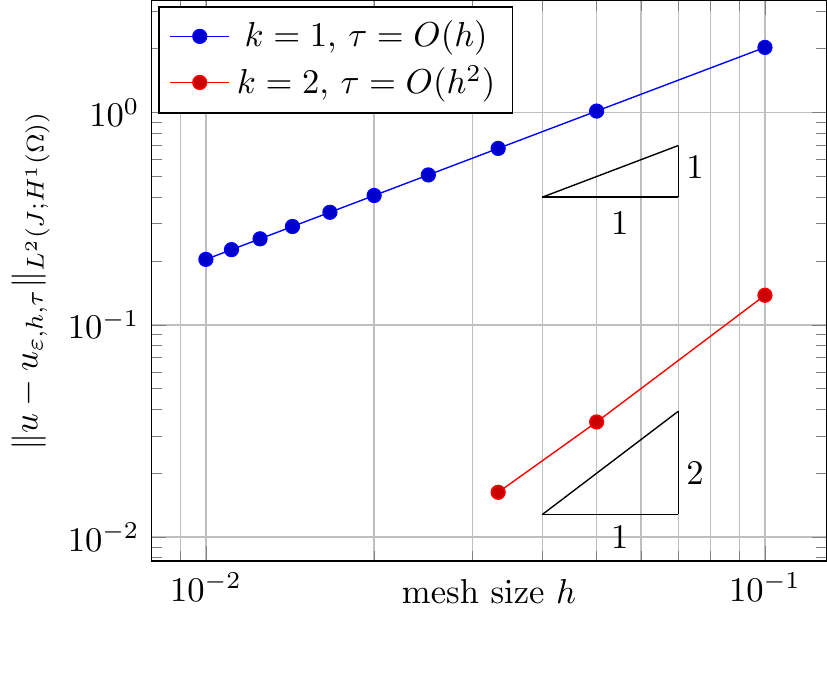}
\caption{ $L^2(J;H^1(\Omega))$ error on a uniform mesh.}
\label{Fig:ErrorPlotforNitsche}
\end{figure}

Furthermore, this report describes the error for uniform mesh in Figure \ref{Fig:ErrorPlotforNitsche}, which shows that the rate of convergence is approximately equal to $k$, which is expected by the theory.

\paragraph{Acknowledgements.}
This study was supported by JST CREST Grant Number JPMJCR15D1 and JSPS KAKENHI Grant Number 15H03635. 
The first author was also supported by the Program for Leading Graduate Schools, MEXT, Japan.

\bibliographystyle{plain}

\begin{thebibliography}{10}

\bibitem{bbchs06}
Y.~Bazilevs, L.~Beir{\~a}o~da Veiga, J.~A. Cottrell, T.~J.~R. Hughes, and
  G.~Sangalli.
\newblock Isogeometric analysis: approximation, stability and error estimates
  for {$h$}-refined meshes.
\newblock {\em Math. Models Methods Appl. Sci.}, 16(7):1031--1090, 2006.

\bibitem{bh07}
Y.~Bazilevs and T.~J.~R. Hughes.
\newblock Weak imposition of {D}irichlet boundary conditions in fluid
  mechanics.
\newblock {\em Comput. \& Fluids}, 36(1):12--26, 2007.

\bibitem{bmch07}
Y.~Bazilevs, C.~Michler, V.~M. Calo, and T.~J.~R. Hughes.
\newblock Weak {D}irichlet boundary conditions for wall-bounded turbulent
  flows.
\newblock {\em Comput. Methods Appl. Mech. Engrg.}, 196(49-52):4853--4862,
  2007.

\bibitem{bbsv14}
L.~Beir{\~a}o~da Veiga, A.~Buffa, G.~Sangalli, and R.~V{\'a}zquez.
\newblock Mathematical analysis of variational isogeometric methods.
\newblock {\em Acta Numer.}, 23:157--287, 2014.

\bibitem{bs08}
S.~C. Brenner and L.~R. Scott.
\newblock {\em The Mathematical Theory of Finite Element Methods}, volume~15 of
  {\em Texts in Applied Mathematics}.
\newblock Springer, New York, third edition, 2008.

\bibitem{cl91}
G.~Choudury and I.~Lasiecka.
\newblock Optimal convergence rates for semidiscrete approximations of
  parabolic problems with nonsmooth boundary data.
\newblock {\em Numer. Funct. Anal. Optim.}, 12(5-6):469--485 (1992), 1991.

\bibitem{chb09}
J.~A. Cottrell, T.~J.~R. Hughes, and Y.~Bazilevs.
\newblock {\em Isogeometric Analysis: Toward Integration of CAD and FEA}.
\newblock Wiley, 2009.

\bibitem{dl92}
R.~Dautray and J.~L. Lions.
\newblock {\em Mathematical analysis and numerical methods for science and
  technology. {V}ol. 5}.
\newblock Springer-Verlag, Berlin, 1992.
\newblock Evolution problems. I, With the collaboration of Michel Artola,
  Michel Cessenat and H\'el\`ene Lanchon, Translated from the French by Alan
  Craig.

\bibitem{eg04}
A.~Ern and J.~L. Guermond.
\newblock {\em Theory and practice of finite elements}, volume 159 of {\em
  Applied Mathematical Sciences}.
\newblock Springer-Verlag, New York, 2004.

\bibitem{eh13}
J.~A. Evans and T.~J.~R. Hughes.
\newblock Explicit trace inequalities for isogeometric analysis and parametric
  hexahedral finite elements.
\newblock {\em Numer. Math.}, 123(2):259--290, 2013.

\bibitem{eva10}
L.~C. Evans.
\newblock {\em Partial differential equations}, volume~19 of {\em Graduate
  Studies in Mathematics}.
\newblock American Mathematical Society, Providence, RI, second edition, 2010.

\bibitem{hj08}
B.~Heinrich and B.~Jung.
\newblock Nitsche mortaring for parabolic initial-boundary value problems.
\newblock {\em Electron. Trans. Numer. Anal.}, 32:190--209, 2008.

\bibitem{nit71}
J.~Nitsche.
\newblock {\" U}ber ein {V}ariationsprinzip zur {L}{\" o}sung von
  {D}irichlet-{P}roblemen bei {V}erwendung von {T}eilr {\" a}umen, die keinen
  {R}andbedingungen unterworfen sind.
\newblock {\em Abh. Math. Sem. Univ. Hamburg}, 36:9--15, 1971.
\newblock Collection of articles dedicated to Lothar Collatz on his sixtieth
  birthday.

\bibitem{sai17}
N.~Saito.
\newblock Variational analysis of the discontinuous {G}alerkin time-stepping
  method for parabolic equations.
\newblock arXiv:1710.10543.

\bibitem{schum07}
L.~L. Schumaker.
\newblock {\em Spline functions: basic theory}.
\newblock Cambridge Mathematical Library. Cambridge University Press,
  Cambridge, third edition, 2007.

\bibitem{tho06}
V.~Thom{\'e}e.
\newblock {\em Galerkin finite element methods for parabolic problems}.
\newblock Springer Verlag, Berlin, second edition, 2006.

\bibitem{wlo87}
J.~Wloka.
\newblock {\em Partial differential equations}.
\newblock Cambridge University Press, Cambridge, 1987.
\newblock Translated from the German by C. B. Thomas and M. J. Thomas.

\end{thebibliography}

\end{document}